\def\centerarc[#1](#2)(#3:#4:#5)
\newcommand{\R}{\mathbb{R}}
\newcommand{\Z}{\mathbb{Z}}
\newcommand{\N}{\mathbb{N}}
\newcommand{\bP}{\mathbb{P}}
\DeclareMathOperator{\supp}{supp}
\newtheorem{theorem}{Theorem}
\newtheorem{lemma}{Lemma}
\newtheorem{corollary}{Corollary}
\theoremstyle{definition}
\newtheorem{example}{Example}
\newtheorem{definition}{Definition}
\newtheorem{remark}{Remark}
\theoremstyle{remark}
\begin{document}

\title{Faber-Krahn inequality and heat kernel estimates on glued graphs}
\author{Emily Dautenhahn}
\thanks{Partially supported by NSF grant DMS-2054593.}
\address{Department of Mathematics and Statistics, Murray State University}
\author{Laurent Saloff-Coste}
\thanks{Partially support by NSF grants DMS-2054593 and DMS-234386.}
\address{Department of Mathematics, Cornell University}
\subjclass[2020]{Primary 60J10, 60G50, 35K08}
\keywords{Faber-Krahn inequality, heat kernel, isoperimetric inequality, glued graphs}

\begin{abstract}
Faber-Krahn functions provide lower bounds on the first Dirichlet eigenvalue of the Laplacian and are useful because they imply heat kernel upper bounds. In this paper, we are interested in Faber-Krahn functions and heat kernel estimates for a certain class of graphs consisting of ``sufficiently nice pages'' (satisfying a Harnack inequality) glued together via a ``sufficiently nice spine.'' For such graphs, we obtain a relative Faber-Krahn function in terms of the Faber-Krahn functions on the pages. The corresponding heat kernel upper bound involves the volumes on the various pages. In the case our graphs satisfy a property we call ``book-like'' and the spine is appropriately transient, we provide a matching lower bound for the heat kernel between two points on the gluing spine. 
\end{abstract}

\maketitle

\section{Introduction}

This paper aims to begin the study of Faber-Krahn functions and heat kernel estimates on a class of graphs that can broadly be described as constructed from sufficiently nice pieces glued in a sufficiently nice way. The reader can think of a copy of $\Z^5$, the five-dimensional square lattice, glued to a copy of $\Z^6$ by identifying their $x_1$-axes. Of particular importance and complication is that we wish to allow for gluing pieces along \emph{infinite} sets of vertices.

This work provides a necessary piece to obtaining heat kernel estimates on certain glued graphs to be presented in a forthcoming paper by the authors. Some of the work in this paper and such estimates may be found in the first author's PhD thesis \cite{Emily_thesis}. This discrete setting work is inspired by the work of Alexander Grigor'yan and the second author in the continuous setting of manifolds with ends described in \cite{lsc_ag_ends}, with supporting details developed in \cite{ag_lsc_extcptset, ag_lsc_hittingprob, ag_lsc_harnackstability,  ag_lsc_FKsurgery}. For another approach to this kind of problem where continuous spaces are glued in a way that does not produce a smooth manifold, see work of Chen and Lou \cite{chen_lou_BMvary}, where they considered $\R^2$ with a ``flag'' (a copy of $\R^1$ attached to the origin). This approach has been expanded upon in work by Lou \cite{lou_explicithk} and Lou and Li \cite{lou_li} which considers a ``flag'' ($\R^1$) attached to $\R^3$, as well as work by Ooi on $\R^{d_1}$ and $\R^{d_2}$ connected at a point for $d_1, d_2$ general \cite{ooi_bmvd}.  In the discrete setting, Woess studies a finite number of Markov chains glued together by a single root in Section 9 of \cite[pg. 96]{woess_rwgraphsgroups}. Here, our particular interest in allowing for gluing over infinite sets of vertices comes out of a hope to generalize work of Grigor'yan and Ishiwata presented in \cite{ag_ishiwata} concerning gluing two copies of $\R^n$ via a paraboloid of revolution. While the discrete setting is interesting in its own right, we also hope it will eventually provide a useful blueprint for generalizations in the continuous setting. This particular paper provides what can be seen as a discrete extension of the continuous Faber-Krahn surgery described by Grigor'yan and the second author in \cite{ag_lsc_FKsurgery}. 

This paper also serves to point out some of the possible complications with obtaining heat kernel estimates in this setting and areas where more work needs to be done. It is nontrivial to describe what we mean by ``gluing'' graphs together in ``sufficiently nice'' ways. The hypotheses under which certain results regarding Faber-Krahn functions hold are also, at present, more lax than those we require to obtain matching two-sided heat kernel estimates on glued graphs. However, as we will see, we cannot expect the Faber-Krahn estimate to be sharp in certain cases, which indicates another technique will be necessary to address certain interesting examples. We do not claim any of our hypotheses here are optimal. It is also interesting to note some of the slight differences in hypotheses in this paper versus in the continuous case of \cite{lsc_ag_ends}; for instance, \cite{lsc_ag_ends} does not discuss any type of uniformity because the gluing locus in that paper is compact.  

There are several competing definitions of Faber-Krahn functions (or inequalities). In general, a Faber-Krahn function is a function that provides a volume based lower bound on the first Dirichlet eigenvalue of the Laplacian on relatively compact domains. This lower bound is called a Faber-Krahn inequality. Faber-Krahn inequalities are related to, and sometimes called, isoperimetric inequalities because of the classical Faber-Krahn/isoperimetric theorem that for any bounded region $\Omega \subset \R^d,$
\[ \lambda_1(\Omega) \geq c_d |\Omega|^{-2/d},\]
where $\lambda_1(\Omega)$ is the first Dirichlet eigenvalue of the Laplacian and $d$ is the dimension. (See for instance Section 10.2 of \cite{GSurv} and the references therein.) The main differences in definitions of Faber-Krahn functions relate to precisely which Laplacian is being considered and to where the lower bound should hold (globally versus locally). In this paper, we are interested in relative Faber-Krahn functions, which provide \emph{local} lower bounds on the first Dirichlet eigenvalue. Faber-Krahn functions imply heat kernel upper bounds; see for instance \cite{ag_upperbds} (where Faber-Krahn inequalities are called $\Lambda$-isoperimetric inequalities). Like Poincar\'{e}, Nash, and Sobolev inequalities, Faber-Krahn inequalities (and other hypotheses) can be used to understand heat kernels, and such functional inequalities are often easier to work with than working directly with heat kernels. The theory of Faber-Krahn functions and their relation to heat kernels and other functional inequalities is well-developed in the work of Grigor'yan (see e.g. \cite{ag_localHarnack, ag_upperbds, ag_integralmax} and the references therein). 

The main result of this paper is contained in Theorems \ref{FK_booklike} and \ref{spine_lower_bd}, which together provide matching two-sided estimates for the the heat kernel between two points on the gluing set, under the assumptions that the glued graph is ``book-like'' and the spine satisfies an appropriate transience hypothesis. Theorem \ref{FK_booklike} gives the upper bound, which relies on the glued Faber-Krahn functions we develop in this paper; the lower bound given in Theorem \ref{spine_lower_bd} uses straightforward techniques and the results of our previous paper \cite{ed_lsc_trans}.

The rest of the paper proceeds as follows. Section \ref{notation} lays out the definitions and notations relating to graphs to be used in the rest of the paper. Section \ref{cutting_gluing} describes the cutting and gluing operations we are interested in. Section \ref{glue_FK_Section} discusses gluing Faber-Krahn functions. The key result on gluing Faber-Krahn functions is Lemma \ref{FK_glue}, which relies on the technical Lemma \ref{FK_quasiiso_lem}, which shows the stability of discrete Faber-Krahn inequalities under quasi-isometry. (The proof of Lemma \ref{FK_quasiiso_lem} is given in Appendix \ref{FK_quasi_iso}.) Section \ref{FK_to_HK} goes from Faber-Krahn functions to heat kernel upper bounds and contains an important result, Theorem \ref{FK_implies_HK}. The proof of Theorem \ref{FK_implies_HK} is rather technical and is therefore presented in Appendix \ref{FK_implies_HK_app}. Section \ref{spine_hk} uses Theorem \ref{FK_implies_HK} to obtain heat kernel estimates between two points in the gluing set (Theorems \ref{FK_booklike} and \ref{spine_lower_bd}). Section \ref{examples} applies our heat kernel bounds to several examples, including ones where we can show the Faber-Krahn upper bound is non-optimal.

\section{Notation and Definitions}\label{notation}

\subsection{General graph notation and random walks}\label{graphs_notation_section}

We begin by clarifying our definition of a graph and setting some basic notations. A graph $\Gamma = (V,E)$ is a set of vertices $V$ and edges $E$, where $E$ is a subset of the set of all pairs of elements of $V$. With this definition, $\Gamma$ is a simple graph that contains neither loops nor multiple edges. A graph is connected if a path of edges can always be found between any two vertices. \emph{Any graphs appearing will be assumed to be simple and connected unless stated otherwise.}

On a graph $\Gamma,$ the shortest path of edges between any two vertices defines the graph distance $d = d_\Gamma$. (If $\Gamma$ is not connected, then we say $d(x,y) = \infty$ if there is no path between vertex $x$ and vertex $y$.) We consider closed balls with respect to the distance $d_\Gamma$:
\[ B(x,r) = \{ y \in V : d(x,y) \leq r \} \quad \forall x \in V, \ r >0.\]

In order to study the Laplacian and the heat kernel on graphs, we consider graphs $\Gamma$ with a random walk structure given by edge weights (conductances) $\mu_{xy}$ and vertex weights (measure) $\pi(x)$ with the following properties:
\begin{itemize}
\item $\mu_{xy} \not = 0 \iff \{x,y\} \in E$ and $\mu_{xy} = \mu_{yx}$
(the edge weights are \emph{adapted to the edges} and \emph{symmetric}) 
\item $\sum_{y \sim x} \mu_{xy} \leq \pi(x) \quad \forall x \in V$ 
(the edge weights are \emph{subordinate} to the measure/vertex weights).
\end{itemize}
The notation $y \sim x$ means that the unordered pair $\{ x, y \}$ belongs to the edge set $E.$ The notation $y \simeq x$ means either $y \sim x$ or $y=x.$ We will use $V(x,r)$ to denote the volume (with respect to $\pi$) of $B(x,r).$

Under the above assumptions, define a Markov kernel $\mathcal{K}$ on $\Gamma$ via: 
\begin{equation}\label{MarkovKernel} 
\mathcal{K}(x,y) =\begin{cases} \frac{\mu_{xy}}{\pi(x)}, & x \not = y \\ 1 - \sum_{z \sim x} \frac{\mu_{xz}}{\pi(x)} , & x=y. \end{cases}
\end{equation}
With this definition loops are not allowed in $\Gamma,$ but the random walk is allowed to stay in place, with the probability of staying in place computable in (\ref{MarkovKernel}). Notice the Markov kernel $\mathcal{K}$ is \emph{reversible} with respect to the measure $\pi$, that is,
\[ \mathcal{K}(x,y) \pi(x) = \mathcal{K}(y,x) \pi(y) \quad \forall x, y \in \Gamma.\]
The random walk structure on $\Gamma$ may be equivalently defined by either a given $\mu, \pi,$ in which case $\mathcal{K}$ is as in (\ref{MarkovKernel}), or by a given Markov kernel $\mathcal{K}$ with reversible measure $\pi.$ 

Let $\mathcal{K}^n(x,y)$ denote the $n$-th convolution power of $\mathcal{K}(x,y)$ and $(X_n)_{n \geq 0}$ denote a random walk on $\Gamma$ driven by $\mathcal{K}$. Then $\mathcal{K}^n$ satisfies $\mathcal{K}^n(x,y) = \bP^x(X_n = y).$ The quantity $\mathcal{K}^n(x,y)$ is not symmetric (in particular, $\mathcal{K}$ itself need not be symmetric), so we study instead the \emph{transition density} of $\mathcal{K}$, which is the \emph{heat kernel} of the random walk. The heat kernel is 
\[ p(n,x,y) = p_n(x,y) = \frac{\mathcal{K}^n(x,y)}{\pi(y)}. \]

There are various hypotheses one may make about the weights of graphs that have nice consequences. In this paper, we will assume graphs have controlled weights and are uniformly lazy.

\begin{definition}[Controlled weights]\label{cont_weights}
We say a graph $\Gamma$ has \emph{controlled weights} if there exists a constant $C_c > 1$ such that 
\begin{equation}\label{controlled} \frac{\mu_{xy}}{\pi(x)} \geq \frac{1}{C_c} \quad \forall x \in \Gamma, \ y \sim x .\end{equation}
\end{definition}

This assumption implies that $\Gamma$ is locally uniformly finite (that is, there is a uniform bound on the degree of any vertex) and that for $x \sim y,$ we have $\mu_{xy} \approx \pi(x) \approx \pi(y)$, where the symbol $\approx$ means comparable up to constants that do not depend on $x,y$, as an abuse of Definition \ref{approx_def} below. 

\begin{definition}[The notation $\approx$]\label{approx_def}
For two functions of a variable $x,$ the notation $f \approx g$ means there exist constants $c_1, c_2$ (independent of $x$) such that 
\[ c_1 f(x) \leq g(x) \leq c_2 f(x).\]
\end{definition}

\begin{definition}[Uniformly lazy]\label{unif_lazy}
We say a pair $(\mu, \pi)$ is \emph{uniformly lazy} if there exists $C_e \in (0,1)$ such that 
\[ \sum_{y \sim x} \mu_{xy} \leq (1-C_e) \pi(x) \quad \forall x \in V.\]
We say a Markov kernel $\mathcal{K}$ is \emph{uniformly lazy} if there exists $C_e \in (0,1)$ such that 
\[ \mathcal{K}(x,x) = 1 - \sum_{z \sim x} \frac{\mu_{xz}}{\pi(x)} \geq  C_e \quad \forall x \in \Gamma. \]
\end{definition}

These two conditions are equivalent. In this case, the Markov chain is aperiodic, that is, $\text{gcd}\{n \geq 1: \mathcal{K}^n(x,x) >0 \} = 1.$

\emph{Unless stated otherwise, we will consider all random walk structures appearing to be uniformly lazy and have controlled weights.}

\begin{definition}[(Lazy) Simple random walk]\label{lazy_SRW}
Let $\Gamma$ be a locally uniformly finite graph. Setting $\pi(x) = \text{deg}(x)$ for all $x \in \Gamma$ and $\mu_{xy} = 1$ for all $\{x,y\} \in E$ is called taking \emph{simple weights} and defines the \emph{simple random walk} (SRW) on $\Gamma.$ Then $\mathcal{K}(x,y) = \frac{1}{\text{deg}(x)}$ if $y \sim x$ and zero otherwise. That is, at each step, the simple random walk moves to a neighbor of the current vertex with equal probability. While the simple random walk has controlled weights, it is not uniformly lazy. 

The \emph{lazy simple random walk} (lazy SRW) takes weights  $\pi(x) = 2\text{deg}(x)$ for all $x \in \Gamma$ and  $\mu_{xy} = 1$ for all $\{x,y\} \in E.$ At each step, the lazy simple random walk stays in place with probability $1/2$ and otherwise moves to a neighbor of the current vertex with equal probability. The lazy simple random walk satisfies the hypotheses of having controlled and uniformly lazy weights.   
\end{definition}

From the above definition, it is clear it is always possible to impose a random walk structure of the type we consider on any locally uniformly finite graph (any graph with a uniform bound on vertex degree). 

\subsection{Subgraphs of a larger graph}\label{subgraph}

It is often useful to consider $\Gamma$ as a subgraph of a larger graph $\widehat{\Gamma} =(\widehat{V}, \widehat{E}).$ If given $\widehat{\Gamma},$ then for any subset of $V$ of $\widehat{V},$ we can construct a graph $\Gamma$ with vertex set $V$ and edge set $E$ where $\{ x , y \} \in E$ if and only if $x,y \in V$ and $\{x,y\} \in \widehat{E}.$ We will abuse notation and use the same symbol to denote both a subset of $\widehat{V}$ and its associated subgraph. 

Further, a subgraph $\Gamma$ inherits a random walk structure from $\widehat{\Gamma}.$ We set $\pi_{\Gamma}(x) = \pi_{\widehat{\Gamma}}(x)$ and $\mu^{\Gamma}_{xy} = \mu^{\widehat{\Gamma}}_{xy}$ for all $x,y \in V, \ \{x,y\} \in E.$ (Hence we may simply use $\pi, \mu$ without indicating the whole graph versus the subgraph, provided that $x,y \in \Gamma.$) 

Then we may define a Markov kernel on $\Gamma$ as in (\ref{MarkovKernel}); we call this Markov kernel the Neumann Markov kernel of $\Gamma$ (with respect to $\widehat{\Gamma}$) and denote it by $\mathcal{K}_{\Gamma, N}.$ To be precise,

\begin{equation}
\mathcal{K}_{\Gamma,N}(x,y) = \begin{cases} \frac{\mu^\Gamma_{xy}}{\pi(x)} = \frac{\mu_{xy}^{\widehat{\Gamma}}}{\pi(x)}, & x \not = y, \ x,y \in V \\ 1 - \sum_{z \sim x} \frac{\mu^{\Gamma}_{xz}}{\pi(x)} = 1 - \sum_{z \sim x, \, z \in V} \frac{\mu^{\widehat{\Gamma}}_{xz}}{\pi(x)}, & x=y \in V.\end{cases}
\end{equation}

We can also define the Dirichlet Markov kernel of $\Gamma$ (with respect to $\widehat{\Gamma}$) by
\begin{equation}
\mathcal{K}_{\Gamma,D}(x,y) = \mathcal{K}_{\widehat{\Gamma}}(x,y) \mathds{1}_{V}(x) \mathds{1}_{V}(y) = \begin{cases} \frac{\mu^\Gamma_{xy}}{\pi(x)}, & x \not = y, \ x, y \in V \\ 1 - \sum_{z \sim x, z \in \widehat{V}} \frac{\mu^{\widehat{\Gamma}}_{xz}}{\pi(x)} , & x=y \in V,\end{cases}
\end{equation}
where $\mathds{1}_V(x) = 1$ if $x \in V$ and zero otherwise. When $V \not = \widehat{V}, \ \mathcal{K}_{\Gamma, D}$ is only a sub-Markovian kernel. 

A subgraph $\Gamma$ comes with its own notion of distance $d_\Gamma,$ where $d_\Gamma(x,y)$ is the length of the shortest path between $x$ and $y$ of edges contained in $\Gamma.$ It is always true that $d_{\widehat{\Gamma}}(x,y) \leq d_\Gamma(x,y).$ 

There are two natural notions for the boundary of a subgraph $\Gamma \subseteq \widehat{\Gamma}.$

\begin{definition}[Boundary of a subgraph]
The \emph{(exterior) boundary} of $\Gamma \subseteq \widehat{\Gamma}$ is the set of points that do not belong to $\Gamma$ with neighbors in $\Gamma,$
\[ \partial \Gamma = \{ y \in \widehat{\Gamma} \setminus \Gamma: \exists x \in \Gamma \text{ s.t. } d_{\widehat{\Gamma}}(x,y) = 1\}.\]
By contrast, the \emph{inner boundary} of $\Gamma$ is the set of points inside $\Gamma$ with neighbors outside of $\Gamma,$ 
\[ \partial_I \Gamma = \{ x \in \Gamma: \exists y \not \in \Gamma \text{ s.t. } d_{\widehat{\Gamma}}(x,y) = 1\}.\]
\end{definition} 

\section{Set-up: Cutting and gluing graphs, book-like graphs}\label{cutting_gluing}

In this section, we describe what we mean by ``gluing" graphs. The purpose of this section is to make precise an appropriate notion to take the place of the phrase ``manifolds with ends'' in the current discrete setting where we wish to allow gluing over infinite sets of vertices. 

Recall that graphs $(\Gamma, \pi,\mu)$ or $(\Gamma, \mathcal{K}, \pi)$ are assumed to be connected, uniformly lazy, and to have controlled weights unless indicated otherwise. We describe the kind of graphs we consider from the perspective of both cutting and gluing. 

\subsection{Gluing graphs together}

Consider graphs $(\Gamma_i, \pi_i, \mu^i)$ for $i=1,\dots, l$. These graphs will be referred to as ``pages'' (occasionally ``pieces'') and play the role of ``ends'' in the manifold with ends setting.

Further assume we have a graph $(\Gamma_0, \pi_0, \mu^0).$ We will refer to $\Gamma_0$ as the ``spine'' or ``gluing set''. In the language of the manifolds with ends setting, the spine plays the role of the central compact set. \textbf{We allow for $\Gamma_0$ to be disconnected}, but we still require that weights on $\Gamma_0$ be adapted, subordinate, controlled, and uniformly lazy  as described in Section \ref{graphs_notation_section}. 

We glue together the pages $\Gamma_i, \ 1 \leq i \leq l$ along the spine $\Gamma_0$ as follows: Assume each $\Gamma_i$ comes with a marked set of vertices (a ``margin''); label this set of vertices $G_i.$ Further, assume that for each $i \in \{1, \dots, l\}$ the spine $\Gamma_0$ contains a set of vertices $G_i'$ that is a copy of $G_i.$ (Note the $G_i'$ need not be disjoint.) For each $i,$ we assume there is an identification map (bijection) between $G_i$ and $G_i'$. Moreover, this identification map should satisfy the compatibility condition that there exist constants $c_I, C_I$ such that if $x \in G_i, x' \in G_i'$ are identified, then 
\begin{align}\label{compatible_weights} c_I \pi_0(x') \leq \pi_i(x) \leq C_I \pi_0(x').\end{align}
In other words, the weights at vertices that are glued together should be comparable, with uniform constants that apply to the whole graph. Due to the controlled weights condition, this implies the weights on any edges glued together are also comparable. We call the graph resulting from gluing the pages to the spine according to these identification maps $\Gamma.$

\begin{remark}~
\begin{itemize}
\item The identification maps above are between vertices. We can think of these identification maps as literally identifying vertices as being the same, or we can think of them as putting an edge between the vertices that are identified. Either way makes no real difference. However, for the sake of consistency, in this paper we think of the identification as literally identifying the vertices, which ensures the following makes sense. 

\item \textbf{In the instance that a vertex $x \in \Gamma_0$ has no neighbors (in $\Gamma_0$), it may be convenient on occasion to allow for $\pi_0(x) = 0.$}  In terms of the Markov kernel, nothing changes; if $\pi_0(x) = 0,$ then $\mathcal{K}(x,y) = 0$ for all $y \not = x$ and $\mathcal{K}(x,x) = 1.$  In this setting, obviously (\ref{compatible_weights}) cannot hold, and the appropriate compatibility condition is instead that there exist (uniform) constants $c_I, C_I$ such that for all $x_i \in \Gamma_i$ and $x_j \in \Gamma_j$ where there exists $x' \in \Gamma_0$ such that both $x_i,\ x_j$ are identified with $x',$ we have 
\begin{align}\label{compatible_weights_B} c_I \pi_j(x_j) \leq \pi_i(x_i) \leq C_I \pi_j(x_j).\end{align}
\end{itemize}
\end{remark}

The random walk structure on the glued graph $\Gamma = (V,E)$ is given by a pair of weights $(\pi, \mu)$ defined from the weights on the pages and spine via
\[ \pi(x) = \sum_{i=0}^l \pi_i(x) \quad \text{ and } \quad \mu_{xy} = \sum_{i=0}^l \mu_{xy}^i \quad \forall x, y \in V,\]
where we take the convention that $\pi_i(x) = 0$ if $x \not \in V_i$ (the vertex set of $\Gamma_i$) and $\mu_{xy}^i =0$ if $\{ x,y \} \not \in E_i$ (the edge set of $\Gamma_i$). 

We ask that the following be true of the glued graph $\Gamma$:
\begin{enumerate}[A.]
\item $\Gamma$ is connected.
\item There exists a number $\alpha>0$ such that $\Gamma_0,$ seen as a subgraph of $\Gamma,$ is $\alpha$-connected, that is, $[\Gamma_0]_\alpha,$ the $\alpha$-neighborhood of $\Gamma_0$ in $\Gamma,$ is connected.

\item When seen as a subgraph of $\Gamma,$ for all $i=1,\dots, l,$ we have $\partial_I \Gamma_i = G_i.$

\item The identification between $G_i, \ G_i'$ satisfies the description given above, i.e. is a bijection between vertices with compatible weights. 
\end{enumerate}

\begin{lemma}\label{glue_graph}
The graph $(\Gamma, \pi, \mu)$ obtained by the gluing procedure above is of the type we consider. That is, $\Gamma = (V,E)$ is a simple connected graph with edge weights $\mu_{xy}$ that are both symmetric and adapted to the edges, and which are  subordinate to the vertex weight $\pi.$ Further, the weights on $\Gamma$ are both controlled and uniformly lazy. 
\end{lemma}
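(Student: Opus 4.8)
The proof is essentially a bookkeeping exercise, verifying one by one each property in the definition of "the type of graph we consider." I would organize it by checking, in order: (1) $\Gamma$ is a simple graph; (2) $\Gamma$ is connected; (3) $\mu$ is symmetric and adapted to $E$; (4) $\mu$ is subordinate to $\pi$; (5) the weights are controlled; (6) the weights are uniformly lazy. Throughout, the key point is that all of the relevant constants ($C_c$, $C_e$, and now $c_I, C_I$ from the compatibility condition) are uniform over the finitely many pages $\Gamma_1,\dots,\Gamma_l$ and the spine $\Gamma_0$, so taking worst-case constants over $i = 0,1,\dots,l$ preserves uniformity.

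First I would dispense with the structural claims. Simplicity is immediate: we built $V$ as a set of vertices (with identifications already carried out) and $E$ as a set of unordered pairs, so no loops or multi-edges arise; and $\{x,y\} \in E$ iff $\mu_{xy} \neq 0$ iff some $\mu^i_{xy} \neq 0$, which is the adaptedness statement. Symmetry of $\mu_{xy} = \sum_i \mu^i_{xy}$ follows termwise from symmetry of each $\mu^i$. Connectedness is exactly hypothesis A. For subordination I would write, for any $x \in V$,
\[
\sum_{y \sim x} \mu_{xy} = \sum_{y \sim x} \sum_{i=0}^l \mu^i_{xy} \le \sum_{i=0}^l \sum_{y \sim_i x} \mu^i_{xy} \le \sum_{i : x \in V_i} \pi_i(x) = \pi(x),
\]
using that each page/spine is itself subordinate and that $\pi(x) = \sum_i \pi_i(x)$; the uniformly lazy bound is the same computation with $(1-C_e^i)$ in place of $1$, giving $\sum_{y\sim x}\mu_{xy} \le (1 - \min_i C_e^i)\pi(x)$, so one takes $C_e := \min_{0 \le i \le l} C_e^i \in (0,1)$.

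The one genuinely substantive step is controlled weights, and this is where the compatibility condition (\ref{compatible_weights}) does the real work. Fix $x \in V$ and $y \sim x$; pick an index $j$ with $\{x,y\} \in E_j$, so $\mu_{xy} \ge \mu^j_{xy} \ge \frac{1}{C_c^j}\pi_j(x)$ by controlled weights on $\Gamma_j$. The issue is that $\pi(x) = \sum_i \pi_i(x)$ may include contributions from \emph{other} pages $i$ (precisely when $x$ lies in a margin $G_i$ identified into the spine and thence into other margins), so I must bound $\pi_i(x)$ in terms of $\pi_j(x)$. This is exactly what (\ref{compatible_weights}) — or (\ref{compatible_weights_B}) in the degenerate $\pi_0 = 0$ case — provides: any two page-weights at an identified vertex are comparable with uniform constants, and $\pi_0(x') \le c_I^{-1}\pi_i(x)$ compares the spine weight too. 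Hence $\pi(x) = \sum_{i : x \in V_i} \pi_i(x) \le \big((l+1)\max(C_I, C_I/c_I, \dots)\big)\,\pi_j(x)$ — a uniform multiple of $\pi_j(x)$, since there are at most $l+1$ terms — and combining gives $\mu_{xy}/\pi(x) \ge 1/C_c$ for a suitable uniform $C_c > 1$ depending only on the $C_c^i$, $c_I$, $C_I$, and $l$. I expect this comparison — carefully tracking that a vertex appearing in several pages has all its page-weights mutually comparable, and handling the convention $\pi_0(x) = 0$ separately via (\ref{compatible_weights_B}) — to be the main obstacle, such as it is; everything else is termwise and immediate. Finally I would remark that, as noted in the text, controlled weights automatically yield local uniform finiteness, so no separate argument is needed there.
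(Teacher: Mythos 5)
Your proposal is correct and follows essentially the same route as the paper's proof: verify simplicity, connectedness, symmetry, and adaptedness directly from the construction; then check subordination and uniform laziness by summing the page-wise inequalities; and finally use the compatibility condition (\ref{compatible_weights}) (or (\ref{compatible_weights_B})) to establish controlled weights by bounding $\pi(x)=\sum_i\pi_i(x)$ by a uniform multiple of $\pi_j(x)$ for a page $j$ containing the edge $\{x,y\}$. If anything you are slightly more careful than the paper in the controlled-weights step, noting explicitly that comparing two page-weights via the spine costs a factor $C_I/c_I$ rather than just $C_I$, and that there are up to $l+1$ (not $l$) summands; you also correctly write the uniformly-lazy step with an inequality where the paper's displayed computation has a typo (an equality sign where $\le$ is meant).
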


\begin{proof}

Requirement A. above forces $\Gamma$ to be connected. The graph remains simple since any edges that are ``doubled'' from gluing can still be thought of as a single edge. 

The symmetry of $\mu_{xy}$ follows from the symmetry of the $\mu_{xy}^i$. If $\mu_{xy} \not =0,$ then there exists some $i \in \{0, \dots, l\}$ such that $\mu_{xy}^i \not = 0,$ which implies $\{x,y\} \in E_i,$ and hence that $\{x,y\} \in E.$ Moreover, if $\{x,y\} \in E,$ then it must be that $\{x,y\} \in E_i$ for some $i  \in \{0, \dots, l\},$ and hence $\mu_{xy} \geq \mu_{xy}^i >0.$ Therefore $\mu$ is adapted to the edges of $\Gamma.$ 

Further, the edge weights are subordinate to the vertex weights since that is true of the spine and pages:
\[ \sum_{y \sim x} \mu_{xy} = \sum_{y \sim x} \sum_{i=0}^l \mu_{xy}^i \leq \sum_{i=0}^l \pi_i(x) = \pi(x).\]

To show $\Gamma$ has controlled weights, we use the compatibility condition (\ref{compatible_weights}) or (\ref{compatible_weights_B}). Consider any edge $\{x,y\} \in \Gamma.$ Then there must exist at least one $i \in \{0, \dots, l\}$ such that $\{x,y\} \in E_i,$ since $\Gamma$ is connected and the gluing process does not add new edges (except those that occur from identifying vertices). Then, if $C_c^i$ denotes the constant for controlled weights of $\Gamma_i,$ we have: 
\begin{align*}
\frac{\mu_{xy}}{\pi(x)} = \frac{\sum_{j=0}^l  \, \mu^j_{xy}}{\sum_{j=0}^l \, \pi_j(x)} \geq \frac{\mu^i_{xy}}{C_I l \pi_i(x)} \geq \frac{1}{C_c^i C_I l} \geq \frac{1}{C_I l \, \max_{i} C_c^i}.
\end{align*}

Similarly, that $\Gamma$ is uniformly lazy again follows from that assumption on the pages and the spine:
\begin{align*}
\sum_{y \sim x} \mu_{xy} &= \sum_{i=0}^l \sum_{y \sim x} \mu_{xy}^i \leq \sum_{i=0}^l (1 - C_e^i) \pi_i(x) = \sum_{i=0}^l \pi_i(x) - \sum_{i=0}^l C_e^i \pi_i(x) \\&\leq \pi(x) - \sum_{i=0}^l (\min_{i} C_e^i) \pi_i(x) = (1 - (\min_i C_e^i)) \pi(x). 
\end{align*}
\end{proof}

\subsection{Cutting graphs into pieces}

The gluing operation described in the previous subsection is relatively natural for graphs. It is also natural to approach the question from the point of view of cutting a larger graph $(\Gamma, \pi, \mu)$ apart into pages and a spine. 

More precisely, assume we have a graph $(\Gamma, \pi, \mu).$ Identify a set of vertices (and their induced subgraph) as $\Gamma_0.$ Remove the \textbf{vertices} in $\Gamma_0$ from $\Gamma.$ This splits the graph into connected components with trailing edges left from removing the vertices in $\Gamma_0.$ 

Assume $\Gamma_0$ is such that doing the above procedure produces a finite number of connected components, $\Gamma_1, \dots, \Gamma_l,$ all of which are infinite graphs. We ``cap'' the trailing edges in a given $\Gamma_i$ with vertices and call the set of these cap vertices $G_i.$ Then $G_i$ has a natural identification with a subset of vertices $G_i'$ of $\Gamma_0.$ 

Recall subgraphs have an inherited Neumann Markov kernel obtained by taking vertex and edge weights from the larger graph, then staying in place with the probability necessary to make a Markov kernel. To define random walk structures on $\Gamma_0,\ \Gamma_1, \dots, \Gamma_l,$ we start with this general approach, but allow for some additional flexibility. Instead of forcing the weights on $\Gamma_0,\ \Gamma_1, \dots, \Gamma_l$ to be \emph{exactly} the weights inherited from $\Gamma,$ we require that the weights on these subgraphs be compatible with (comparable to) the weights on $\Gamma.$ That is, we ask that there exist constants $C_B, c_B$ such that for all $x \in \Gamma_i,\ 0 \leq i \leq l,$
\begin{align}\label{compatible_cutting} c_B \pi(x) \leq \pi_i(x) \leq C_B \pi(x),\end{align}
and we ask a similar inequality to hold for edge weights. 

The above description takes $\Gamma_0$ to be a subgraph of $\Gamma$ induced by a set of vertices; however, it is also permissible to take $\Gamma_0$ to be a set of disconnected vertices and to allow $\pi_0(x) =0$ for such vertices. 

\begin{lemma}\label{cut_graph}
Under the cutting construction described above satisfying condition (\ref{compatible_cutting}), the random walk structures on the subgraphs $\Gamma_0,\ \Gamma_1, \dots, \Gamma_l$ are uniformly lazy and have controlled weights.
\end{lemma}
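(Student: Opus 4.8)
The plan is to mirror the proof of Lemma \ref{glue_graph}: every property is transported from the ambient graph $\Gamma$ to each subgraph $\Gamma_j$, $0\le j\le l$, using only the two-sided comparison (\ref{compatible_cutting}) together with its stated edge-weight analogue. Adaptedness and symmetry of the $\mu^j$ are built into the construction, and subordinacy is part of what it means to equip $\Gamma_j$ with a random walk structure, so the content of the lemma is the two \emph{uniform} estimates. The first step is the bookkeeping that makes (\ref{compatible_cutting}) applicable: for a vertex $y$ of $\Gamma_j$, write $\tilde y$ for the associated vertex of $\Gamma$ — thus $\tilde y=y$ unless $y$ is one of the cap vertices of $G_j$, in which case $\tilde y\in\Gamma_0$ is the vertex it is identified with. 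Every edge $\{x,y\}$ of $\Gamma_j$ is then the image of the edge $\{\tilde x,\tilde y\}$ of $\Gamma$ — literally the same edge when it lies in the interior of a component, or the original trailing edge with its endpoint in $\Gamma_0$ relabelled — and $\{x,y\}\mapsto\{\tilde x,\tilde y\}$ is a bijection from the edges of $\Gamma_j$ at $x$ onto the edges of $\Gamma$ at $\tilde x$ (for the spine $\Gamma_0$, which is merely an induced subgraph of $\Gamma$, there are no cap vertices and $\tilde x=x$). With this notation, (\ref{compatible_cutting}) and its edge-weight version say $\pi_j(y)\approx\pi(\tilde y)$ and $\mu^j_{xy}\approx\mu_{\tilde x\tilde y}$ with uniform constants $c_B,C_B$.

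For controlled weights, fix $x\in\Gamma_j$ and $y\sim x$ in $\Gamma_j$. Since $\tilde x\sim\tilde y$ in $\Gamma$, controlled weights of $\Gamma$ give $\mu_{\tilde x\tilde y}/\pi(\tilde x)\ge 1/C_c$, hence
\[
 \frac{\mu^j_{xy}}{\pi_j(x)}\ \ge\ \frac{c_B}{C_B}\cdot\frac{\mu_{\tilde x\tilde y}}{\pi(\tilde x)}\ \ge\ \frac{c_B}{C_B\,C_c},
\]
so $\Gamma_j$ has controlled weights with constant $C_c^j:=\max\{2,\ C_BC_c/c_B\}>1$. Vertices with no neighbour in $\Gamma_j$ occur only in the (possibly disconnected) spine and impose no condition, so they are harmless.

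For uniform laziness, summing the edge comparison over the finitely many neighbours of $x$ in $\Gamma_j$ and using the bijection above together with the uniform laziness of $\Gamma$,
\[
 \sum_{y\sim x\ \text{in}\ \Gamma_j}\mu^j_{xy}\ \le\ C_B\!\!\sum_{z\sim\tilde x\ \text{in}\ \Gamma}\!\!\mu_{\tilde x z}\ \le\ C_B(1-C_e)\,\pi(\tilde x)\ \le\ \frac{C_B(1-C_e)}{c_B}\,\pi_j(x),
\]
where the last step uses $\pi(\tilde x)\le\pi_j(x)/c_B$. Hence $\Gamma_j$ is uniformly lazy with constant $C_e^j:=1-C_B(1-C_e)/c_B$, \emph{provided this quantity is positive}.

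That positivity is the one point I expect to require care: unlike in Lemma \ref{glue_graph}, where the page and spine weights add up to $\pi$ exactly, here we transport a laziness \emph{inequality} through a two-sided comparison, which costs a factor $C_B/c_B$, so nothing forces $C_e^j>0$ for badly distorted comparison constants. This is exactly where one uses that (\ref{compatible_cutting}) is intended to produce a bona fide random walk structure of the type considered in this paper; concretely I would record the mild requirement $C_B(1-C_e)<c_B$, which is precisely the condition under which the perturbed weights $\mu^j$ are both subordinate to $\pi_j$ and uniformly lazy. It is satisfied automatically by the canonical ``inherited Neumann'' structure on each $\Gamma_j$ — restrict $\pi$ and $\mu$ from $\Gamma$, each trailing edge keeping its original weight, so that $c_B=C_B=1$ and the constants $C_c,C_e$ of $\Gamma$ may be used verbatim — and this is the structure one perturbs from in practice. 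Granting it, the two uniform estimates hold for $\Gamma_0,\Gamma_1,\dots,\Gamma_l$ simultaneously with constants depending only on $C_c,C_e,c_B,C_B$, which is the assertion of Lemma \ref{cut_graph}.
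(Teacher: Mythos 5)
The paper contains no proof of Lemma \ref{cut_graph}; it is dispatched with the remark that it ``is the complement of Lemma \ref{glue_graph} and is proved by similarly straightforward arguments.'' Your proposal supplies those arguments and, in doing so, exposes that the laziness half is not actually automatic. One minor imprecision first: the map $\{x,y\}\mapsto\{\tilde x,\tilde y\}$ from edges of $\Gamma_j$ at $x$ to edges of $\Gamma$ at $\tilde x$ is an \emph{injection}, not a bijection, when $j\geq 1$ --- edges of $\Gamma$ at $\tilde x$ leading into other components $\Gamma_k$, $k\neq j$, or into the rest of $\Gamma_0$ have no preimage in $\Gamma_j$ --- but your inequalities only use one direction, so this does not affect the argument. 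Your controlled-weights estimate is complete and correct.

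Your last paragraph is the substantive observation, and it is right. In Lemma \ref{glue_graph} the ambient weights are \emph{defined} as exact sums of page and spine weights, so the laziness constants combine without loss; here the two-sided comparison (\ref{compatible_cutting}) introduces a factor $C_B/c_B$, and the transported estimate $\sum_y\mu^j_{xy}\le \frac{C_B}{c_B}(1-C_e)\,\pi_j(x)$ yields a laziness constant only when $C_B(1-C_e)<c_B$. That this can fail is not hypothetical: take $\Gamma=\Z$ with the lazy SRW ($\pi\equiv 4$, $\mu\equiv 1$), cut at $\Gamma_0=\{0\}$, keep $\mu^1\equiv 1$ on $\Gamma_1=\Z_{\ge 0}$, and set $\pi_1(1)=2$ while $\pi_1(x)=4$ otherwise; then (\ref{compatible_cutting}) holds with $c_B=1/2$, $C_B=1$, the weights are subordinate, yet $\sum_y\mu^1_{1y}=2=\pi_1(1)$, so $\Gamma_1$ is not uniformly lazy at $x=1$. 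So the lemma as stated needs either your extra hypothesis $C_B(1-C_e)<c_B$, or the tacit (and arguably intended) requirement that the perturbed structures already be ``of the type we consider,'' in which case only the controlled-weights half has content. Your proposal is the correct argument for controlled weights and the correct diagnosis for the laziness claim; the one thing I would add is that the paper should state the hypothesis explicitly rather than leaving it implicit.
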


Lemma \ref{cut_graph} is the complement of Lemma \ref{glue_graph} and is proved by similarly straightforward arguments. 

\begin{remark}~
\begin{itemize}
\item With the notion of adding ``cap'' vertices, the question of whether or not to add edges between two such vertices arises. Setting $\mu_{xy}^i = \mu_{xy}$ if $x,y \in \Gamma_i$ and zero else adds back in edges that were removed. 

\item While it might seem most natural to simply take the random walk structure inherited from $\Gamma$ on the spine and pages, allowing the flexibility of having comparable weights as in (\ref{compatible_cutting}) ensures it is possible to cut a graph apart in such a way that it can be glued back together into \emph{exactly} the graph we began with. Any hypotheses assumed on the pages are stable under such perturbations of weights. 
\end{itemize}
\end{remark}

We require the following of the above cutting construction:
\begin{enumerate}[a.]
\item As already mentioned, removing $\Gamma_0$ should create a finite number of connected components, all of which are infinite.
\item In $\Gamma,$ the graph $\Gamma_0$ is $\alpha$-connected.
\item The set $\Gamma_0$ should have the property that any vertex $x \in \Gamma_0$ either has (1) all neighbors also in $\Gamma_0$ or (2) has neighbors in $\Gamma_i$ and $\Gamma_j$ with $i \not =j$ and $i, j \in \{0, 1, \dots, l\}.$ 
\end{enumerate}

Property c. above ensures that $\partial_I \Gamma_i = G_i$ and prevents selecting vertices for $\Gamma_0$ that are surrounded by other vertices from only one page. It is the analog of property C. for the gluing operation. 

We demonstrate the above construction with a simple example and postpone further examples until after we have described additional hypotheses.  

\begin{example}[Gluing half-planes]\label{glue_half}
Consider $(\Gamma_i, \pi_i, \mu_{xy}^i)$ for $i=1,2,$ where $\Gamma_1 = \{ (x,y) \in \Z^2: y\geq0\}$ is the discrete upper half-plane and $\Gamma_2 = \{ (x,y) \in \Z^2 : y\leq0\}$ is the discrete lower half-plane. Take the lazy simple random walk (Definition \ref{lazy_SRW}) on both graphs. This means that vertices away from the set $\{y=0\}$ in $\Gamma_i$ have four neighbors, so they have weight eight, and that vertices on the set $\{y=0\}$ have only three neighbors, and therefore weight six. All edges have weight one. Let the spine $\Gamma_0$ be a totally disconnected copy of $\Z$ (vertices are $\Z,$ no edges) with $\pi_0(x) \equiv \mu_{xy} \equiv 0$ for all $x,y \in \Gamma_0.$ 

Let $G_i$ be the set $\{(x,y) \in \Gamma_i: y=0\}$ and $G_i' = \Gamma_0$ for both $i=1,2.$ Then all $G_i, \ G_i'$ are copies of $\Z$ and $G_i, \ G_i'$ have a natural identification for $i=1,2.$ Figure \ref{gluing_ex} illustrates this scenario and how the random walk changes along the gluing spine $\Gamma_0$. On the left, the figure shows $\Gamma_1, \Gamma_2,$ and $\Gamma_0$ as above with vertex weights shown. On the right, when we glue these graphs, the vertex weights along the spine become twelve, while all other vertex weights remain at eight. The overlapped edges now have weight two, while all other edges still have weight one. These weights can be used to calculate the transition probabilities $\mathcal{K}(x,y)$ on the glued space. 
\begin{figure}[t]
\centering
\begin{tikzpicture}[scale=.8]
   \foreach \i in {0,...,5}{
        \filldraw[blue] (\i,0) circle (4pt);
        \filldraw[blue] (\i, 1) circle(4pt);
        \node[anchor=north] at (\i, -0.1) {\small $6$};
        \node at (\i-0.2, 0.7) {\small $8$};}
    \foreach \i in {0,...,5}{
       \draw[blue,thick] (\i,0)--(\i,1)--(\i,2);}
      \draw[blue,thick](-1,1)--(6,1);
    \draw[blue,thick] (-1,0)--(6,0);
    
    \node at (6, 0.5) {\textcolor{blue}{$\Gamma_1$}};
    
        \foreach \i in {0,...,5}
       \filldraw[orange] (\i,-1) circle (4pt);
       
       \node at (6, -1) {\textcolor{orange}{$\Gamma_0$}};
    
    \foreach \i in {0,...,5}{
       \filldraw[magenta] (\i,-2) circle (4pt);
       \filldraw[magenta] (\i,-3) circle (4pt);
       \node[anchor=south] at (\i, -1.9) {\small $6$};
        \node at (\i-0.2, -2.7) {\small $8$};}
    \foreach \i in {0,...,5}{
       \draw[magenta,thick] (\i,-2)--(\i,-3)--(\i,-4);}
    \draw[magenta,thick] (-1,-2)--(6,-2);
     \draw[magenta,thick] (-1,-3)--(6,-3);
    
       \node at (6, -2.5) {\textcolor{magenta}{$\Gamma_2$}};

            \foreach \i in {10,...,15}{
       \draw[blue, thick] (\i,-1)--(\i,0)--(\i,1);}
       
             \foreach \i in {10,...,15}{
       \draw[magenta, thick] (\i,-1)--(\i,-2)--(\i,-3);}
       
       \draw[blue, ultra thick] (9,-.97)--(16,.-.97);
       \draw[magenta, ultra thick] (9,-1.03)--(16,.-1.03);
       
       \draw[blue, thick] (9,0)--(16,0);
       \draw[magenta, thick] (9,-2)--(16,-2);

         \foreach \i in {10,...,15}{
        \filldraw[orange] (\i,-1) circle (4pt);
        \filldraw[blue] (\i, 0) circle (4pt);
        \filldraw[magenta] (\i, -2) circle (4pt);
        \node at (\i-0.3, -0.7) {\small $12$};
        \node at (\i-0.3, 0.3) {\small $8$};
        \node at (\i-0.3, -1.7) {\small $8$};}
        
        \node at (16, -0.5) {$\Gamma$};

\end{tikzpicture}
 \caption{This figure illustrates gluing two half-planes with lazy SRW along a copy of $\Z$ as described in Example \ref{glue_half}. The numbers shown are vertex weights. On the left, the orange spine vertices have weight zero, so it is not shown. All edges have weight one. On the right, the ``doubled'' edges from the gluing now have weight two, while all other edges retain weight one.}\label{gluing_ex}
 \end{figure}

 Alternatively, we could cut $\Gamma = \Z^2$ with lazy SRW apart as shown in Figure \ref{half_plane_cut}. First, moving from top left to top right, we cut the half plane apart along spine $\Gamma_0 = \{y=0\}$ seen as a disconnected set of vertices with $\pi_0 \equiv 0,$ so $\Gamma_0$ is not shown in the figure. This leaves trailing vertices, which we can ``cap,'' and we add back the edges between the capped vertices. Taking the Neumann random walk on these subgraphs of the original graph $\Gamma$ gives the bottom left of Figure \ref{half_plane_cut}. Note the slight difference between this and the left side of Figure \ref{gluing_ex}. Moreover, gluing the graphs in the bottom left of Figure \ref{half_plane_cut} back together produces a variation on the right side of Figure \ref{gluing_ex}, which is not \emph{exactly} the lazy simple random walk on $\Z^2$ that we started with. 
 
However, using the flexibility of our cutting and gluing construction, it is possible to cut $\Z^2$ with lazy SRW apart in such a way that gluing it back together yields $\Z^2$ with lazy SRW.  To do so, cut $\Z^2$ apart along $\Gamma_0=\{y=0\}$ with the rule that $\Gamma_0$ is still a disconnected set of vertices of weight zero, but now each vertex in $G_i$ (the set of ``cap'' vertices) is given weight four and that edges along the $y=0$ axis are given weight $1/2$ in $\Gamma_i, \ i=1,2$. All other vertices retain weight eight and all other edges retain weight one. This is illustrated in the bottom right of Figure \ref{half_plane_cut}. This cutting/gluing operation satisfies conditions a., b., c. as well as the comparable weights condition (\ref{compatible_cutting}), and, when glued back together, produces $\Z^2$ with the lazy SRW. 

\begin{figure}[t]
\centering
\begin{tikzpicture}[scale=.8]
   \foreach \i in {0,...,5}{
           \filldraw[blue] (\i,-1) circle (4pt);
        \filldraw[blue] (\i,0) circle (4pt);
        \filldraw[blue] (\i, 1) circle(4pt);
        \node at (\i-0.2, 1.3) {\small $8$};
          \node at (\i-0.2, 0.3) {\small $8$};
            \node at (\i-0.2, -0.7) {\small $8$};}
    \foreach \i in {0,...,5}{
       \draw[blue,thick] (\i,-2)--(\i,-1)--(\i,0)--(\i,1)--(\i,2);}
      \draw[blue,thick](-1,1)--(6,1);
    \draw[blue,thick] (-1,0)--(6,0);
        \draw[blue,thick] (-1,-1)--(6,-1);
    
    \node at (6, 0.5) {\textcolor{blue}{$\Gamma$}};
    
    \foreach \i in {10,...,15}{
    \filldraw[magenta] (\i, 1) circle (4pt);
    \draw[magenta,thick] (\i,0.2)--(\i,2);}
    \draw[magenta, thick] (9,1)--(16,1);
    \node at (16, 1.5) {\textcolor{magenta}{$\Gamma_1$}};

        \foreach \i in {10,...,15}{
    \filldraw[orange] (\i, -1) circle (4pt);
    \draw[orange,thick] (\i,-0.2)--(\i,-2);}
    \draw[orange, thick] (9,-1)--(16,-1);
        \node at (16, -1.5) {\textcolor{orange}{$\Gamma_2$}};

%
    \foreach \i in {0,...,5}{
       \filldraw[magenta] (\i,-4) circle (4pt);
       \filldraw[magenta] (\i,-5) circle (4pt);
          \draw[magenta,thick] (\i,-5)--(\i,-4)--(\i,-3);
          \node at (\i-0.2, -3.7) {\small $8$};
             \node at (\i-0.2, -4.7) {\small $8$};}
       \draw[magenta, thick] (-1,-5)--(6,-5);
       \draw[magenta, thick] (-1,-4)--(6,-4);
           \node at (6, -3.5) {\textcolor{magenta}{$\Gamma_1$}};

    \foreach \i in {0,...,5}{
       \filldraw[orange] (\i,-6) circle (4pt);
       \filldraw[orange] (\i,-7) circle (4pt);
          \draw[orange,thick] (\i,-6)--(\i, -8);
                    \node at (\i-0.2, -6.3) {\small $8$};
             \node at (\i-0.2, -7.3) {\small $8$};}
       \draw[orange, thick] (-1,-6)--(6,-6);
       \draw[orange, thick] (-1,-7)--(6,-7);
                  \node at (6, -7.5) {\textcolor{orange}{$\Gamma_2$}};


                      \foreach \i in {10,...,15}{
       \filldraw[magenta] (\i,-4) circle (4pt);
       \filldraw[magenta] (\i,-5) circle (4pt);
          \draw[magenta,thick] (\i,-5)--(\i,-4)--(\i,-3);
          \node at (\i-0.2, -3.7) {\small $8$};
             \node at (\i-0.2, -4.7) {\small $4$};
             \node at (\i-0.5, -5.2) {\tiny $1/2$};}
       \draw[magenta, thick] (9,-5)--(16,-5);
   \draw[magenta, thick] (9,-4)--(16,-4);
        \node at (16, -3.5) {\textcolor{magenta}{$\Gamma_1$}};

    \foreach \i in {10,...,15}{
       \filldraw[orange] (\i,-6) circle (4pt);
       \filldraw[orange] (\i,-7) circle (4pt);
          \draw[orange,thick] (\i,-6)--(\i, -8);
                    \node at (\i-0.2, -6.3) {\small $4$};
             \node at (\i-0.2, -7.3) {\small $8$};
              \node at (\i-0.5, -5.7) {\tiny $1/2$};}
      \draw[orange, thick] (9,-6)--(16,-6);
             \draw[orange, thick] (9,-7)--(16,-7);
                \node at (16, -7.5) {\textcolor{orange}{$\Gamma_2$}};

\end{tikzpicture}
 \caption{Begin with the lazy simple random walk on $\Z^2$ in the top left. Vertex weights are shown, and all edges have weight one. We cut it apart by removing the set of vertices where $y=0$, leaving us the two graphs with trailing edges on the top right. (The ``removed''  $\Gamma_0$ vertices are treated as a disconnected set all with weight zero, and are not shown.) By capping the trailing edges with vertices, adding back edges between these vertices, and taking the Neumann random walk, we obtain the bottom left pair of graphs with given vertex weights and all edge weights one. Compare with Figure \ref{gluing_ex}. The pair of graphs on the bottom right, where all unlabeled edges have weight one, can be glued back together into $\Z^2$ with the lazy simple random walk.}\label{half_plane_cut}
 \end{figure} 

 \end{example}

\subsection{Further assumptions: Fixed width spines and book-like graphs}\label{graph_assump}

Above, we gave a very general description of cutting and gluing graphs. In this section, we describe two additional hypotheses that restrict the behavior of the spine. The general Faber-Krahn results in Sections \ref{glue_FK_Section} and \ref{FK_to_HK} hold for graphs that have a ``fixed width'' spine. A stricter hypothesis on the spine is that the graph be ``book-like.'' While we hope to have results for graphs with fixed width spines, many computations for obtaining heat kernel estimates simplify for book-like graphs (see Section \ref{spine_hk}), and Faber-Krahn estimates are not sharp for general graphs with fixed width spines (see Example \ref{cross_ex}).

\begin{definition}[Fixed width spine]\label{fixed_width}
Assume we have a construction $(\Gamma,\pi,\mu)$ as in the previous subsections with pages $\Gamma_1, \dots, \Gamma_l$ and gluing spine $\Gamma_0.$ We say the spine $\Gamma_0$ has a \emph{fixed width} of $\delta>0$ if, for all $v \in \Gamma_0,$ there exists $i \in \{1, \dots, l\}$ such that $d(v, \Gamma_i) \leq \delta.$ 
\end{definition}

\begin{definition}[Book-like graph]
Assume we have a construction $(\Gamma,\pi,\mu)$ as in the previous subsections with pages $\Gamma_1, \dots, \Gamma_l$ and gluing spine $\Gamma_0.$ For $\delta >0,$ we say $\Gamma$ is a \emph{$\delta$-book-like} graph (or simply a book-like graph) if, for all $v \in \Gamma_0$ and for all $1 \leq i \leq l,$ we have $d(v, \Gamma_i) \leq \delta.$
\end{definition}

If $\Gamma$ has a spine of fixed width $\delta,$ then each vertex in the spine $\Gamma_0$ is at distance at most $\delta$ from \emph{some} page; the nearby page may be different for different points in the spine. By contrast, if $\Gamma$ is a $\delta$-book-like graph, then each vertex in the spine $\Gamma_0$ is at distance at most $\delta$ from \emph{all} pages. If $\Gamma$ is a $\delta$-book-like graph, then its spine also has fixed width $\delta$.

\subsection{Examples}

In light of the two hypotheses on the spine described above, we describe several more examples. We are not interested in graphs where the spine does not have fixed width. 

\begin{example}[Gluing lattices along a shared lower-dimensional lattice]\label{gluing_latticeex}
The motivating or prototypical example is that of gluing lattices along a lower dimensional lattice. For $i=1, \dots, l,$ let $\Gamma_i = \Z^{D_i}$ with the lazy simple random walk, so that the pages of the graph are lattices of varying dimensions. Let the spine $\Gamma_0$ be a disconnected (no edges) copy of $\Z^k$, where we require $\min_{1 \leq i \leq l} D_i -k \geq 3.$ (This condition, which is related to transience, will become clear later.)  Then we may set $G_i$ to be equal to a copy of $\Z^k$ in $\Z^{D_i},$ for example, $G_i = \{ (x_1, \dots, x_k, 0, \dots, 0) \in \Z^{D_i}: x_1, \dots, x_k \in \Z \}$ and $G_i' = \Gamma_0$ for all $i = 1, \dots, l.$ This is a $\delta$-book-like graph for any $\delta>0$ since the spine is part of all of the pages. 

\begin{remark}
The precise definition of the spine is not so crucial, and there is a significant amount of flexibility in this kind of construction. Consider now cutting apart the graph of Example \ref{gluing_latticeex} so that the spine is a \emph{slab} made up of several copies of $\Z^k$ neighboring a central one. One way to achieve this is to define the sets 
\[ N_i =  \{ x_1, \dots, x_k, x_{k+1}, \dots, x_{D_i} \in \Z^{D_i}: x_1, \dots, x_{D_i} \in \Z \text{ and } -h \leq x_{k+1}, \dots, x_{D_i} \leq h \} \]
for $i=1, \dots, l.$ Then define the pages to be $\tilde{\Gamma}_i = \Z^{D_i} \setminus N_i$ for $i = 1, \dots, l.$ The margins $G_i$ are naturally the inner boundary of $\tilde{\Gamma}_i$ seen as a subgraph of $\Gamma$. Take the spine to be $\Gamma_0 = \cup_{i=1}^l (N_i \cup \partial_i \Gamma_i)$ and $G'_i=G_i.$ The effect of this is that there the spine is made up of several slabs of $\Z^k,$ one per page, which are connected by a central copy of $\Z^k$ (the one where all coordinates except for the first $k$ are zero).  With this definition of the pages and spine, the boundaries of the pages do not overlap, which may prove advantageous. Note an example like this demonstrates it is possible to have vertices in the spine that do not belong to any of the pages. (Although, as previous examples show, this need not be the case.) 
\end{remark}

\end{example}

\begin{example}[Cutting/gluing over a finite set]\label{finite_set_ex}
From a cutting perspective, start with a connected graph $(\Gamma, \mu, \pi)$ with controlled and uniformly lazy weights. Pick a finite set of vertices $\Gamma_0$ in $\Gamma.$ We require the three properties a., b., c. of a cutting construction above; since $\Gamma_0$ is finite, it must be $\alpha$-connected in $\Gamma,$ but the other two cutting hypotheses are not immediately satisfied. Since $\Gamma_0$ is finite, it is automatic that this construction produces a book-like graph.  

From the perspective of gluing, begin with graphs $(\Gamma_i, \mu^i, \pi_i)$ for $1 \leq i \leq l.$ Let $\Gamma_0$ be a finite set of size $K.$ In each page $\Gamma_i,$ let $G_i$ be a finite set of vertices of size $K.$ Then we may set $G_i' = \Gamma_0$ for all $1 \leq i \leq l,$ and assume we have a set bijection between each $G_i$ and $\Gamma_0.$ 

We may assume $\Gamma_0$ is connected and comes with the simple lazy random walk. Then certainly $\Gamma$ is connected, and $\Gamma_0$ remains connected in $\Gamma.$ Since $\Gamma_0$ is finite, the vertex weights are automatically comparable, and, moreover, it is impossible to make a graph that is not book-like via this construction. 
\end{example}

\begin{example}[A graph with fixed width spine that is not book-like]\label{samevol_notbook}
Suppose $l=3$ and $\Gamma_i = \Z^4$ for all $1 \leq i \leq 3$ with lazy simple random walk. Identify the $x_1$-axes of $\Gamma_1$ and $\Gamma_2$, and identify the $x_2$-axes of $\Gamma_2$ and $\Gamma_3.$ This example fits the given gluing construction, where $\Gamma_0$ consists of two axes that share a single vertex (we can think of $\Gamma_0$ as connected or not). This example also has a fixed width spine, since every vertex in $\Gamma_0$ is near the page $\Gamma_2$. However, this example is not a book-like graph since points far along the shared $x_1$-axis of $\Gamma_1,\ \Gamma_2$ are far from $\Gamma_3.$ This is the simplest type of example of a graph of fixed width spine that is not book-like. There are many possible variations on this example with different numbers of pages, dimensions of the pages, and dimensions of the gluing spine that are also not book-like. 
\end{example}

\section{Gluing relative Faber-Krahn functions}\label{glue_FK_Section}

Recall all graphs $(\Gamma, \mu, \pi)$ are assumed to be connected with controlled and uniformly lazy weights. The objective of this section is to prove Lemma \ref{FK_glue}, which provides a relative Faber-Krahn function for glued graphs $\Gamma$ as described in Section \ref{cutting_gluing} in terms of the relative Faber-Krahn functions of the pages $\Gamma_i$, under the hypotheses that $\Gamma$ has a spine of fixed width and an additional geometric hypothesis: that each page $\Gamma_i$ is uniform in $\Gamma$.

\begin{definition}[Uniform]\label{unif}
A subgraph $\Gamma$ of a graph $\widehat{\Gamma}$ is \emph{uniform} in $\widehat{\Gamma}$  if there exist constants $0<c_u, \, C_U < +\infty$ such that for any $x,y \in \Gamma$ there is a path $\gamma_{xy} = (x_0 = x, x_1, \dots, x_k =y)$ between $x$ and $y$ in $\Gamma$ such that 
\begin{enumerate}[(a)]
\item $k \leq C_U d_{\widehat{\Gamma}}(x,y) $
\item For any $j \in \{0, \dots, k\},$
\[ d_{\widehat{\Gamma}}(x_j, \partial \Gamma) = d_{\widehat{\Gamma}}(x_j, \widehat{\Gamma} \setminus \Gamma) \geq c_u (1 + \min \{ j, k-j\}).\]
\end{enumerate}
\end{definition}

\begin{definition}[Inner uniform]
A subgraph $\Gamma$ of $\widehat{\Gamma}$ is \emph{inner uniform} in $\widehat{\Gamma}$  if there exist constants $0<c_u, \, C_U < +\infty$ such that for any $x,y \in \Gamma$ there is a path $\gamma_{xy} = (x_0 = x, x_1, \dots, x_k =y)$ between $x$ and $y$ in $\Gamma$ such that 
\begin{enumerate}[(a)]
\item $k \leq C_U d_{\Gamma}(x,y) $
\item For any $j \in \{0, \dots, k\},$
\[ d_{\widehat{\Gamma}}(x_j, \partial \Gamma) = d_{\widehat{\Gamma}}(x_j, \widehat{\Gamma} \setminus \Gamma) \geq c_u (1 + \min \{ j, k-j\}).\]
\end{enumerate}
\end{definition}

Uniform and inner uniform domains were first introduced by Martio and Sarvas in \cite{martio_sarvas} in the 1970s and are related to John domains. They were further investigated by V\"{a}is\"{a}l\"{a} in the 1980s in \cite{vaisala_uniform} and by many other authors since. Literature of particular relevance here is work of Gyrya and Saloff-Coste \cite{bluebook} regarding Neumann and Dirichlet type heat kernel estimates in inner uniform domains in general Dirichlet spaces and of Diaconis, Houston-Edwards, and Saloff-Coste \cite{pd_khe_lsc_finiteMC, pd_khe_lsc_gambler} who make use of (inner) uniform domains in the discrete setting. It is from these latter two papers that we have taken our definitions above. 

The only difference between a uniform domain and an inner uniform domain is that uniform domains require the length of the path in $\Gamma$ to be comparable to the distance between $x$ and $y$ in the larger graph $\widehat{\Gamma},$ while inner uniform domains require the length of the path to be comparable to distance in $\Gamma.$ It is therefore obvious that all uniform domains are inner uniform. In this paper, the main hypothesis we make is \emph{uniformity}, although inner uniformity is necessary for the lower bounds in Section \ref{HK_lowerbd}.   

In general, determining whether a given set is (inner) uniform is challenging. Let $\phi : \R^{d-1} \to \R^d$ be a Lipschitz function and $\Omega = \{ x = (x_1, \dots, x_d) \in \R^d : x_d >  \phi(x_1, \dots, x_{d-1}) \}$ denote the set of points above the graph of $\phi$ in $\R^d.$ It is known that such domains are uniform in $\R^d$ the sense of the appropriate continuous space definition, and therefore $\Omega \cap \Z^d$ is also uniform in $\Z^d$. A slit two-dimensional lattice, that is $\Z^2$ with the set of vertices $\{(x,0): x >0\}$ (and all edges connected to them) removed, is the typical example of a domain that is inner uniform but not uniform. For further examples of (inner) uniform domains, see Section 1.3 and Chapter 6 of \cite{bluebook} or Section 2.2 of \cite{mathav_unif} and the references therein. In general, slits and ``bottlenecks'' are obstacles to uniformity. Despite the general difficulty, for the relatively simple examples considered in this paper, (inner) uniformity is usually fairly obvious. Further, (inner) uniform sets are plentiful; in particular \cite{rajala_unif} shows that in metric spaces satisfying certain hypotheses (doubling and quasiconvex), any bounded set can be $\varepsilon$-approximated by uniform domains. 

A Faber-Krahn function is a function that provides a lower bound (Faber-Krahn inequality) on the first Dirichlet eigenvalue of the Laplacian; this can be considered as a kind of isoperimetric inequality. The relationship between Faber-Krahn inequalities, heat kernel upper bounds, and other functional inequalities is found in the work of Grigor'yan (see e.g. \cite{ag_localHarnack, ag_upperbds, ag_integralmax}). There are several types of Faber-Krahn function; in this paper, we are interested in relative Faber-Krahn functions, which give local eigenvalue bounds. 

\begin{definition}[Relative Faber-Krahn function]
Let $(\Gamma,\pi,\mu)$ be an infinite connected graph with controlled and uniformly lazy weights. Let $B = B(z,r), \ \nu \in (0, +\infty).$ We say that a function $\Lambda$ where $(B,\nu) \mapsto \Lambda (B,\nu)$ is \emph{a relative Faber-Krahn function}  of $\Gamma$ if the following two properties hold:
\begin{enumerate}
\item $\nu \mapsto \Lambda (B,\nu)$ is non-increasing in $\nu$
\item For all balls $B \subset \Gamma,$ and any $\Omega \subset B,$ 
\[ \lambda_1(\Omega) \geq \Lambda (B, \pi(\Omega)),\]
where $\lambda_1(\Omega)$ denotes the first Dirichlet eigenvalue of the Laplacian in $\Omega.$ 
\end{enumerate}
\end{definition}

\begin{remark}
We can write $\lambda_1(\Omega)$ using the variational definition
\begin{align*}
\lambda_1(\Omega) = \inf_{\supp f \subseteq \Omega} \frac{\lVert \, |\nabla f| \, \rVert_2^2}{\lVert f \rVert_2^2}
\end{align*}
where 
\begin{align*}
&\lvert \nabla f\rvert(x) = \Big(\frac{1}{2} \sum_{y \in \Gamma} |f(x)-f(y)|^2 \, \mathcal{K}(x,y) \Big)^{1/2}\\
& \lVert \, \lvert \nabla f \rvert \, \rVert_2^2 = \sum_{x \in \Gamma} \big[\lvert \nabla f \rvert(x)\big]^2 \, \pi(x) = \frac{1}{2} \sum_{x,y \in \Gamma} |f(x)-f(y)|^2 \, \mu_{xy}\\
& \lVert f \rVert_2^2 = \sum_{x \in \Gamma} |f(x)|^2 \, \pi(x).
\end{align*}
\end{remark}

The following definition makes precise a notion of ``perturbing graphs,'' which is useful due to the variability in our cutting/gluing operation.

\begin{definition}[Quasi-isometry]\label{quasi-iso}~ Consider two graphs as metric measure spaces, $(\Gamma_1, d_1, \pi_1)$ and $(\Gamma_2, d_2, \pi_2),$ where $d_i$ denotes the graph distance and $\pi_i$ denotes a measure on vertices. (For this definition, a stochastic process/random walk structure is not needed.) We say $\Gamma_1$ and $\Gamma_2$ are \emph{quasi-isometric} if there exists a function $\Phi : \Gamma_1 \to \Gamma_2$ such that 
\begin{enumerate}[(1)]
\item There exists $\varepsilon > 0$ such that the $\varepsilon$-neighborhood of the image of $\Phi$ is equal to $\Gamma_2.$
\item There exist constants $a, b$ such that 
\[ a^{-1} d_1(x,y) - b \leq d_2(\Phi(x), \Phi(y)) \leq a d_1(x,y) \quad \forall \ x,y \in \Gamma_1.\]
\item There exists a constant $C_q >0$ such that 
\[ \frac{1}{C_q} \pi_1(x) \leq \pi_2(\Phi(x)) \leq C_q \pi_1(x).\]
\end{enumerate}
We call such a function $\Phi$ a \emph{quasi-isometry}.
\end{definition}

See Remark \ref{quasi-iso_compweights} in Appendix \ref{FK_quasi_iso} for some consequences of quasi-isometry in this situation. The following lemma verifies that relative Faber-Krahn functions of quasi-isometric graphs are similar.

\begin{lemma}[Relative Faber-Krahn functions of quasi-isometric graphs]\label{FK_quasiiso_lem}
Assume $\Gamma = (\Gamma, d, \pi, \mu),\ \widehat{\Gamma} = (\widehat{\Gamma}, \widehat{d}, \widehat{\pi}, \widehat{\mu})$ are connected, countably infinite graphs with controlled and uniformly lazy weights. Further, assume $\Gamma, \ \widehat{\Gamma}$ are quasi-isometric, and fix
a quasi-isometry $\Phi: \Gamma \to \widehat{\Gamma}$ and a choice of its inverse $\Phi^{-1}.$ 

Then there exist constants $c_1, c_2, c_3>0$ (depending on choice of quasi-isometry and the constants controlling the weights) such that if $\Lambda(B,\nu)$ is a relative Faber-Krahn function for  $\Gamma,$ a relative Faber-Krahn function for $\widehat{\Gamma}$ is given by 
\[ \widehat{\Lambda}(\widehat{B}(z, r), \nu) = c_1 \Lambda(B(\Phi^{-1}(z), c_2 r), c_3 \nu).\]
\end{lemma}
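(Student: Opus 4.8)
The plan is to transport the relative Faber-Krahn inequality from $\Gamma$ to $\widehat\Gamma$ through the quasi-isometry $\Phi$, using the variational characterization of $\lambda_1$. Given a ball $\widehat B = \widehat B(z,r) \subset \widehat\Gamma$ and a subset $\widehat\Omega \subset \widehat B$, together with a test function $\widehat f$ supported on $\widehat\Omega$, I would push $\widehat f$ back to $\Gamma$ by setting $f = \widehat f \circ \Phi$ (or a suitable variant using $\Phi^{-1}$, being careful about which direction is a genuine map). The support of $f$ then lies in $\Phi^{-1}(\widehat\Omega)$, which by property (2) of Definition \ref{quasi-iso} is contained in a ball $B(\Phi^{-1}(z), c_2 r)$ in $\Gamma$ for an appropriate $c_2$ absorbing the multiplicative constant $a$ (and the additive $b$ since $r$ may be taken $\geq 1$, or one treats small balls separately). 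The measure comparison (3) gives $\pi(\Phi^{-1}(\widehat\Omega)) \leq c_3' \,\widehat\pi(\widehat\Omega)$, and also $\lVert f\rVert_{2,\pi}^2 \approx \lVert \widehat f\rVert_{2,\widehat\pi}^2$. The remaining task is to compare the Dirichlet energies $\lVert\,|\nabla f|\,\rVert_2^2$ and $\lVert\,|\widehat\nabla \widehat f|\,\rVert_2^2$; here is where controlled weights enter crucially, since under that hypothesis the Dirichlet form is bi-Lipschitz equivalent to $\sum_{x}\sum_{y: d(x,y)\leq R}|f(x)-f(y)|^2\pi(x)$ for any fixed radius $R$, and such ``$R$-rough'' energies are stable under quasi-isometry because $\Phi$ distorts distances by bounded factors. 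Chaining these comparisons yields $\lambda_1(\widehat\Omega) \gtrsim \lambda_1(\Phi^{-1}(\widehat\Omega)) \geq \Lambda(B(\Phi^{-1}(z), c_2 r), \pi(\Phi^{-1}(\widehat\Omega))) \geq \Lambda(B(\Phi^{-1}(z), c_2 r), c_3 \widehat\pi(\widehat\Omega))$, using monotonicity of $\Lambda$ in its second argument to replace $\pi(\Phi^{-1}(\widehat\Omega))$ by the larger $c_3\widehat\pi(\widehat\Omega)$; the overall constant $c_1$ comes from the energy and $L^2$-norm comparison factors.

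The key steps, in order, are: (i) fix the map $\Phi$, its chosen inverse, and the constant $\varepsilon$ from (1), and record the basic geometric consequences — that $\Phi^{-1}$ of a ball sits inside a dilated ball, and that $\Phi, \Phi^{-1}$ are ``rough inverses'' (i.e. $d(\Phi^{-1}\Phi(x), x)$ is uniformly bounded); (ii) establish the $R$-rough energy comparison on a single graph with controlled weights — this is essentially a path/telescoping argument showing $|f(x)-f(y)|^2 \lesssim (\text{bounded number of terms}) \sum_{\text{edges on a geodesic}}|f(u)-f(v)|^2$, combined with local finiteness to bound overcounting; (iii) show the $R$-rough energy is comparable under $\Phi$, by mapping each ``rough edge'' $\{x,y\}$ with $d(x,y)\leq R$ in $\Gamma$ to a pair $\{\Phi(x),\Phi(y)\}$ with $\widehat d \leq aR$ in $\widehat\Gamma$ and vice versa, again controlling multiplicity via local finiteness and the lower distance bound in (2); (iv) assemble the inequality on $\lambda_1$ and invoke monotonicity of $\Lambda$. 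Since the statement says the proof of this lemma is deferred to Appendix \ref{FK_quasi_iso}, the write-up in the main text can be brief and cite the appendix.

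I expect the main obstacle to be step (ii)–(iii): making the Dirichlet energy genuinely stable, not just up to constants on individual terms, but globally after summing, while controlling how many rough edges in one graph correspond to a given rough edge in the other. The subtlety is that $\Phi$ need not be injective and $\Phi^{-1}$ is only a choice of section, so a single vertex of $\widehat\Gamma$ may have a bounded-but-nontrivial preimage, and geodesics do not map to geodesics — one must pass through the $R$-rough form precisely to avoid needing $\Phi$ to preserve adjacency. Controlled weights ($\mu_{xy}\approx \pi(x)\approx \pi(y)$ for $x\sim y$) and the resulting uniform bound on vertex degrees are exactly what make all the multiplicities and weight ratios uniformly bounded, so the proof is a careful bookkeeping argument rather than a conceptual one; the cleanest packaging is probably to state once and for all that, for graphs with controlled weights and bounded geometry, $\lambda_1(\Omega) \approx \lambda_1^{(R)}(\Omega)$ where $\lambda_1^{(R)}$ uses the $R$-rough energy, and then observe that $\lambda_1^{(R)}$ is manifestly a quasi-isometry invariant up to adjusting $R$, the ball radius, and the volume by constants. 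A minor additional point to handle is the additive constant $b$ in (2): for balls of radius $r \geq 1$ it is harmless (absorb into $c_2$), and balls of radius $r < 1$ in a graph are single points, for which the inequality is trivial or can be handled by taking $c_2$ large enough that $c_2 r \geq 1$.
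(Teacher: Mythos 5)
Your overall strategy — pull the test function back through $\Phi$, control its support, and compare Dirichlet energies and $L^2$ norms — is the right framework and matches the paper's. But there is a genuine gap at the step where you assert $\lVert f\rVert_{2,\pi}^2 \approx \lVert \widehat f\rVert_{2,\widehat\pi}^2$ for the naive pullback $f = \widehat f\circ\Phi$. That equivalence fails: $\Phi$ is only $\varepsilon$-surjective, so $\widehat f$ may be nonzero (indeed concentrated) at vertices of $\widehat\Gamma$ not in the image of $\Phi$, in which case $\widehat f\circ\Phi$ drops those values entirely. In the extreme, if $\widehat f$ is supported entirely off $\Phi(\Gamma)$, then $f\equiv 0$ and the Rayleigh-quotient comparison becomes vacuous. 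Property (3) of the quasi-isometry definition gives $\pi(\alpha)\approx \widehat\pi(\Phi(\alpha))$, which lets you compare the sum over the image of $\Phi$ with the corresponding sum over $\Gamma$, but says nothing about the part of $\lVert\widehat f\rVert^2$ living off the image. Your $R$-rough-energy device helps with the numerator (energy) comparison, since a rough difference of $\widehat f$ across a gap of size $\leq\varepsilon$ is controlled by the energy, but it does not repair the $L^2$-norm comparison in the denominator, which is what you actually need to bound $\lambda_1(\widehat\Omega)$ from below.

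The paper's proof patches exactly this hole: instead of pulling back $\widehat f$ directly, it pulls back the $\varepsilon$-average $f_\varepsilon(x) := \widehat V(x,\varepsilon)^{-1}\sum_{y\in\widehat B(x,\varepsilon)} \widehat f(y)\,\widehat\pi(y)$ and works with $f_\varepsilon\circ\Phi$. Because $\Phi(\Gamma)$ is $\varepsilon$-dense, every point of $\supp\widehat f$ contributes to $f_\varepsilon$ at some nearby image point, so $\lVert f_\varepsilon\circ\Phi\rVert^2 \gtrsim \lVert\widehat f\rVert^2$ can be proved by a covering argument (the paper's Claim~3). The cost is that the support grows by $\varepsilon$ and the energy estimate (Claim~2) must be carried out for the averaged function, via a telescoping/path argument over bounded-length paths — essentially the $R$-rough idea you describe, applied to $f_\varepsilon$ rather than $\widehat f$. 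If you modify your proposal to pull back the $\varepsilon$-average rather than $\widehat f$ itself, the rest of your outline (support inclusion into $B(\Phi^{-1}(z), c_2 r)$, measure comparison, monotonicity of $\Lambda$ in $\nu$) goes through.
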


The proof of Lemma \ref{FK_quasiiso_lem} is straightforward but fairly technical and is given in Appendix \ref{FK_quasi_iso}.

\begin{definition}[Augmented pages]\label{aug_pages}
Let $(\Gamma, \pi, \mu)$ be a graph with spine $\Gamma_0$ of fixed width $\delta$ and pages $\Gamma_1, \dots, \Gamma_l$. For each $i \in \{1,\dots, l\},$ the \emph{augmented page} associated with $\Gamma_i$ will be denoted by $\widehat{\Gamma}_i$ and is defined as 
\[ \widehat{\Gamma}_i := [\Gamma_i]_\delta \cap (\Gamma_i \cup \Gamma_0),\]
where $[\Gamma_i]_\delta := \{ y \in \Gamma: d(y, \Gamma_i) \leq \delta\}$ is the $\delta$-neighborhood of $\Gamma_i.$ 

We define a random walk structure on the augmented pages $\widehat{\Gamma}_i$ by setting $\widehat{\pi}_i,\ \widehat{\mu}^i$ to take the values of $\pi, \mu$ from $\Gamma.$ In the event that the weights on $\Gamma_i $ are not precisely the same as those of $\Gamma$ (due to the compatible weights condition), we could take instead the weights from $\Gamma_i$ (seen as graphs separate from $\Gamma$). Any such variation will always produce comparable weights, so that any two such choices of weights creates quasi-isometric graphs. Consequently, the precise choice of weight does not alter the results obtained here except up to a change of constants. 
\end{definition} 

If $\Gamma$ is $\delta$-book-like, then $\Gamma_0 \subseteq \widehat{\Gamma_i}$ for all $i \in \{1, \dots, l\}.$ The $\delta$-book-like hypothesis ensures the augmented page $\widehat{\Gamma_i}$ consists of ``page $i$'' and the entire spine $\Gamma_0$. By contrast, if $\Gamma$ only has a spine of fixed width $\delta,$ the augmented pages need not contain the \emph{entire} spine. 

\begin{lemma}\label{aug_quasiiso}
Let $(\Gamma, \pi, \mu)$ have pages $\Gamma_1, \dots, \Gamma_l$ and spine $\Gamma_0$ of fixed width $\delta.$ Further assume each page $\Gamma_i$ is uniform in $\Gamma.$

Then each page $\Gamma_i$ is quasi-isometric to both its augmented version $\widehat{\Gamma}_i$ and to its $1$-neighborhood $[\Gamma_i]_1 = \{ x \in \Gamma: d(x, \Gamma_1) \leq 1\}.$ 
\end{lemma}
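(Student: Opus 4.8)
The plan is to exhibit the natural inclusion maps as quasi-isometries and to verify the three conditions of Definition \ref{quasi-iso} using the uniformity hypothesis. Since $\Gamma_i \subseteq [\Gamma_i]_1 \subseteq \widehat{\Gamma}_i$ as vertex sets, the candidate map in each case is the inclusion $\iota \colon \Gamma_i \hookrightarrow [\Gamma_i]_1$ (respectively $\Gamma_i \hookrightarrow \widehat{\Gamma}_i$). Condition (1) is immediate: every vertex of $[\Gamma_i]_1$ (resp.\ $\widehat{\Gamma}_i$) is within distance $1$ (resp.\ $\delta$) of $\Gamma_i$ in $\Gamma$, and since the target graphs carry the weights from $\Gamma$ (up to the comparable-weights constants) the corresponding neighborhood inside the target is controlled as well; so the $\varepsilon$-neighborhood of the image is the whole graph for $\varepsilon = \delta$. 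Condition (3), the comparability of vertex weights $\pi_i(x) \approx \widehat\pi_i(x)$, follows directly from the compatible-weights hypotheses (\ref{compatible_weights}), (\ref{compatible_cutting}), together with the remark in Definition \ref{aug_pages} that any admissible choice of weights on the augmented page is comparable to that of $\Gamma$.

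The substance is condition (2), which amounts to comparing three distances on $\Gamma_i$: the intrinsic distance $d_{\Gamma_i}$, and the distances $d_{[\Gamma_i]_1}$ and $d_{\widehat\Gamma_i}$ (which in turn are squeezed between $d_\Gamma$ and $d_{\Gamma_i}$). First I would record the cheap inequalities: for any subgraph $H$ with $\Gamma_i \subseteq H \subseteq \Gamma$ and $x,y \in \Gamma_i$ we have $d_\Gamma(x,y) \le d_H(x,y) \le d_{\Gamma_i}(x,y)$, which gives the upper bound $d_H(\iota x, \iota y) \le d_{\Gamma_i}(x,y)$ in the required form (with $a = 1$). For the lower bound one needs $d_{\Gamma_i}(x,y) \le a\, d_H(x,y) + b$; it suffices to show $d_{\Gamma_i}(x,y) \lesssim d_\Gamma(x,y)$, since $d_H \ge d_\Gamma$. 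This is exactly where uniformity of $\Gamma_i$ in $\Gamma$ is used: by Definition \ref{unif}(a), any $x,y \in \Gamma_i$ are joined by a path inside $\Gamma_i$ of length at most $C_U\, d_\Gamma(x,y)$, so $d_{\Gamma_i}(x,y) \le C_U\, d_\Gamma(x,y)$, and we may take $a = C_U$, $b = 0$. (One should note the quasi-isometry relation is not required to be symmetric in its two bounds, so having $a=1$ on one side and $a=C_U$ on the other is fine after taking $a = \max\{1, C_U\}$.)

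The main obstacle — really the only place any care is needed — is making sure that the ``length'' conditions compose correctly when the target graph $H$ is $[\Gamma_i]_1$ or $\widehat\Gamma_i$ rather than $\Gamma$ itself: one must check that a geodesic in $\Gamma$ between two points of $\Gamma_i$ can be replaced by a comparable-length path lying in $[\Gamma_i]_1$ (trivially true since such a geodesic, having both endpoints in $\Gamma_i$, may pass only through $\partial\Gamma_i$-adjacent vertices, all of which lie in $[\Gamma_i]_1$) so that $d_{[\Gamma_i]_1}(x,y) \approx d_\Gamma(x,y)$, and likewise for $\widehat\Gamma_i$. Combined with the uniformity bound this pins down all comparisons. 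I would then assemble the three verified conditions into the statement, noting that composing the two quasi-isometries $\Gamma_i \to [\Gamma_i]_1$ and $[\Gamma_i]_1 \hookrightarrow \widehat\Gamma_i$ (or arguing directly) handles $\widehat\Gamma_i$, and observe that the constants depend only on $\delta$, $C_U$, $c_u$, and the weight-comparability constants, as claimed implicitly by the surrounding discussion.
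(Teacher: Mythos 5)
Your approach coincides with the paper's, which simply records that ``simple arguments show that the inclusion map from $\Gamma_i \to \widehat{\Gamma}_i$ (or from $\Gamma_i \to [\Gamma_i]_1$) is a quasi-isometry.'' The substance is in your second paragraph, and it is correct: the trivial chain $d_\Gamma \le d_H \le d_{\Gamma_i}$ for any intermediate subgraph $H$ with $\Gamma_i \subseteq H \subseteq \Gamma$, combined with the uniformity estimate $d_{\Gamma_i}(x,y) \le C_U\, d_\Gamma(x,y)$ from Definition~\ref{unif}(a), already shows that $d_{\Gamma_i}$, $d_{[\Gamma_i]_1}$, $d_{\widehat{\Gamma}_i}$, and $d_\Gamma$ restricted to pairs of points of $\Gamma_i$ are all comparable, which is condition~(2) of Definition~\ref{quasi-iso}.

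One caution about your third paragraph. The parenthetical claim that a $\Gamma$-geodesic between two points of $\Gamma_i$ ``may pass only through $\partial\Gamma_i$-adjacent vertices, all of which lie in $[\Gamma_i]_1$'' is not true in general: nothing prevents such a geodesic from exiting through $G_i$, travelling well into $\Gamma_0$ or through another page $\Gamma_j$, and re-entering $\Gamma_i$ elsewhere. Fortunately you do not need this claim at all --- the chain of inequalities you already established in the second paragraph compares $d_{[\Gamma_i]_1}$ (and $d_{\widehat{\Gamma}_i}$) directly to $d_{\Gamma_i}$ without requiring any $\Gamma$-geodesic to remain inside the target subgraph, so the ``main obstacle'' you identify is not in fact an obstacle. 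You should simply delete the parenthetical (and with it the third paragraph's framing); otherwise the argument is sound and matches the intended one.
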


\begin{proof}
Simple arguments show that the inclusion map from $\Gamma_i \to \widehat{\Gamma}_i$ (or from $\Gamma_i \to [\Gamma_i]_1$) is a quasi-isometry.
\end{proof}

\begin{lemma}[Gluing Faber-Krahn functions]\label{FK_glue}
Let $(\Gamma, \mu, \pi)$ be a graph with pages $\Gamma_1, \dots, \Gamma_l$ and spine $\Gamma_0.$ Assume the spine $\Gamma_0$ has fixed width and that each page $\Gamma_i$ is uniform in $\Gamma$. Moreover, assume each page $\Gamma_i$ has a relative Faber-Krahn function $\Lambda_i, \ 1 \leq i \leq l.$ 

Let $B = B(z,r) \subset \Gamma.$ Then there exist constants $a_1, a_2, c_1, c_2, c_3 >0$ such that $\Gamma$ has a relative Faber-Krahn function given by 
\[ \Lambda(B, \nu) := \begin{cases} a_1 \Lambda_i(B,a_2 \nu) & \text{ if } B \subset \Gamma_i \text{ for some } i = 1, \dots, l \\ \overline{\Lambda}(B,v) & \text{ else,} \end{cases}\]
where
\[ \overline{\Lambda}(B,\nu) :=  c_1 \min_{i \in J_B} \min_{\alpha \in [B]_{\delta} \cap \Gamma_i \cap [\Gamma_0]_{\delta}} \Lambda_i (B_i(\alpha, c_2 r), c_3 \nu)).\]
with $J_B := \{ 1 \leq i \leq l : B \cap \widehat{\Gamma}_i \not = \emptyset\}.$ 
\end{lemma}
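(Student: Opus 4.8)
The plan is to prove the two cases of the formula for $\Lambda(B,\nu)$ separately, reducing both to the hypothesized relative Faber-Krahn functions $\Lambda_i$ on the pages via the quasi-isometry stability result Lemma \ref{FK_quasiiso_lem}. Recall the basic strategy: given $\Omega \subset B$, we want to bound $\lambda_1(\Omega)$ from below by $\Lambda(B,\pi(\Omega))$, where $\lambda_1$ is computed on $\Gamma$ using the variational formula. The key monotonicity fact I will use repeatedly is that enlarging the ambient graph only decreases $\lambda_1$: if $\Omega$ sits inside a subgraph $\Gamma'$ of $\Gamma$ with compatible (comparable) weights, then $\lambda_1^{\Gamma}(\Omega) \approx \lambda_1^{\Gamma'}(\Omega)$ up to constants depending on the weight-comparison constants, because the Dirichlet forms and $L^2$ norms are comparable (every edge contributing to $|\nabla f|^2$ in $\Gamma'$ also appears in $\Gamma$, and conversely edges of $\Gamma$ from a vertex of $\Omega$ to $\Gamma \setminus \Gamma'$ can only increase the $\Gamma$-form). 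I would state this as a preliminary observation.

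\emph{Case 1: $B \subset \Gamma_i$ for some $i$.} Here any $\Omega \subset B \subset \Gamma_i$, so $\Omega$ lives entirely in the page $\Gamma_i$. By the weight-comparability observation, $\lambda_1^{\Gamma}(\Omega) \geq c \, \lambda_1^{\Gamma_i}(\Omega)$. Since $B$ is a ball in $\Gamma$ contained in $\Gamma_i$, it is contained in a ball $B_i$ of $\Gamma_i$ of comparable radius (distances only grow when passing to the subgraph, but since $B \subset \Gamma_i$ and $\Gamma_i$ is uniform, a $\Gamma$-ball inside $\Gamma_i$ is contained in a $\Gamma_i$-ball of radius $\leq C_U r + O(1)$), and $\pi(\Omega) \approx \pi_i(\Omega)$. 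Applying the relative Faber-Krahn function $\Lambda_i$ of $\Gamma_i$ together with its monotonicity in $\nu$ gives $\lambda_1^{\Gamma}(\Omega) \geq a_1 \Lambda_i(B, a_2 \pi(\Omega))$ after adjusting the ball radius and absorbing constants; here I use that $\Lambda_i(B(\cdot,r),\cdot)$ as written can be taken with respect to the $\Gamma$-ball by monotonicity in radius. The non-increasing property of $\Lambda$ in $\nu$ is inherited from that of $\Lambda_i$.

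\emph{Case 2: $B$ not contained in any single page.} This is the main obstacle. The idea: decompose $\Omega = \bigcup_i \Omega_i$ where $\Omega_i := \Omega \cap \widehat{\Gamma}_i$ (using that $\Gamma = \bigcup_i \widehat{\Gamma}_i$ when the spine has fixed width $\delta$ — every spine vertex is within $\delta$ of some page, hence lies in some augmented page, and every page vertex lies in its own augmented page). Since $\lambda_1(\Omega) = \inf \{ \mathcal{E}(f,f)/\|f\|_2^2 : \supp f \subseteq \Omega\}$ and $\|f\|_2^2 = \sum_i \|f \mathds{1}_{\widehat{\Gamma}_i}\|_2^2$ (roughly; with overlap corrections costing a factor of $l$), there is some $i^* \in J_B$ with $\|f \mathds{1}_{\widehat{\Gamma}_{i^*}}\|_2^2 \geq \frac{1}{l}\|f\|_2^2$; restricting attention to that piece and using $\mathcal{E}_\Gamma(f,f) \geq \mathcal{E}_{\widehat{\Gamma}_{i^*}}(f\mathds{1}_{\widehat{\Gamma}_{i^*}}, f\mathds{1}_{\widehat{\Gamma}_{i^*}})$ (edges within $\widehat{\Gamma}_{i^*}$ are a subset; the truncation to $\widehat{\Gamma}_{i^*}$ introduces boundary terms which are controlled because on $\partial_I \widehat{\Gamma}_{i^*}$ the function $f$ must be small — actually cleaner: note $\supp f \subseteq \Omega$ and use that a ball $B = B(z,r)$ meeting $\widehat{\Gamma}_{i^*}$ sits inside a $\widehat{\Gamma}_{i^*}$-ball centered at a point $\alpha \in [B]_\delta \cap \Gamma_{i^*} \cap [\Gamma_0]_\delta$ of radius $\leq c_2 r$, because $\widehat{\Gamma}_{i^*}$ is within $\delta$ of $\Gamma_{i^*}$ and $\Gamma_{i^*}$ is uniform in $\Gamma$, so $\Gamma$-distances and $\widehat{\Gamma}_{i^*}$-distances are comparable there). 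Then $\lambda_1^{\widehat{\Gamma}_{i^*}}(\Omega_{i^*}) \geq \widehat{\Lambda}_{i^*}(\widehat{B}_{i^*}(\alpha, c_2 r), \pi(\Omega_{i^*}))$ where $\widehat{\Lambda}_{i^*}$ is the relative Faber-Krahn function of $\widehat{\Gamma}_{i^*}$ supplied by Lemma \ref{FK_quasiiso_lem} applied to the quasi-isometry $\Gamma_{i^*} \cong \widehat{\Gamma}_{i^*}$ of Lemma \ref{aug_quasiiso}; unwinding Lemma \ref{FK_quasiiso_lem} converts this into $c_1 \Lambda_{i^*}(B_{i^*}(\alpha, c_2 r), c_3 \pi(\Omega))$. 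Taking the $\min$ over $i \in J_B$ and over admissible centers $\alpha$ (since we do not know which $i^*$ or $\alpha$ is selected) yields exactly $\overline{\Lambda}(B,\nu)$. I would be careful that (i) the constants $c_1,c_2,c_3$ absorb the factor $l$, the weight-comparison constants $C_B, c_B, c_I, C_I$, the uniformity constants $c_u, C_U$, the fixed width $\delta$, and the quasi-isometry constants from Lemma \ref{FK_quasiiso_lem}; (ii) the non-increasing property in $\nu$ survives the $\min$, which it does since a minimum of non-increasing functions is non-increasing, and $\Lambda_{i^*}(\cdot, c_3\nu)$ is non-increasing in $\nu$; and (iii) the truncation step $\mathcal{E}_\Gamma(f,f) \gtrsim \mathcal{E}_{\widehat{\Gamma}_{i^*}}(f\mathds{1}_{\widehat{\Gamma}_{i^*}}, f\mathds{1}_{\widehat{\Gamma}_{i^*}})$ genuinely holds — this is the delicate point, and the cleanest justification is that the edges cut by the truncation all lie in the overlap region, where by controlled weights the local geometry is uniformly bounded, so the truncated Dirichlet energy loses at most a bounded factor; alternatively one avoids truncation entirely by noting $f$ restricted to $\widehat{\Gamma}_{i^*}$ still has $\mathcal{E}_\Gamma(f,f) \geq \frac{1}{2}\sum_{x,y \in \widehat{\Gamma}_{i^*}} |f(x)-f(y)|^2 \mu_{xy}$ verbatim since that is a sub-sum of the full double sum, and $\supp(f|_{\widehat{\Gamma}_{i^*}}) \subseteq \Omega_{i^*}$ automatically. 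The second route is the one I would actually use, which eliminates the obstacle: the Dirichlet energy over $\widehat{\Gamma}_{i^*}$ is literally bounded above by $\mathcal{E}_\Gamma(f,f)$ with no boundary correction, because it is a partial sum of nonnegative terms.

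The step I expect to be the genuine main obstacle is the bookkeeping of which ball in $\widehat{\Gamma}_{i^*}$ (hence in $\Gamma_{i^*}$) contains $\Omega_{i^*}$, i.e. justifying that $B(z,r) \cap \widehat{\Gamma}_{i^*} \subseteq B_{\widehat{\Gamma}_{i^*}}(\alpha, c_2 r)$ for some $\alpha \in [B]_\delta \cap \Gamma_{i^*} \cap [\Gamma_0]_\delta$: this requires combining the fixed-width bound (to land $\alpha$ near $\Gamma_{i^*}$ and near $\Gamma_0$), the uniformity of $\Gamma_{i^*}$ in $\Gamma$ (to pass from $\Gamma$-distance to $\Gamma_{i^*}$-distance with a multiplicative loss $C_U$ plus additive $O(\delta)$), and the Lemma \ref{aug_quasiiso} quasi-isometry (to pass to $\widehat{\Gamma}_{i^*}$-distance), while making sure the center $\alpha$ ranges only over the stated index set so that the final $\min$ matches $\overline{\Lambda}$ exactly. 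Everything else — weight comparisons, the pigeonhole on the $L^2$ norm, monotonicity in $\nu$ — is routine given the preliminary observations and Lemma \ref{FK_quasiiso_lem}.
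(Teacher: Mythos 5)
Your proposal is correct and follows essentially the same route as the paper's proof. In particular, your ``second route'' for controlling the Dirichlet energy --- observing that $\frac{1}{2}\sum_{x,y \in \widehat{\Gamma}_{i^*}} |f(x)-f(y)|^2 \mu_{xy}$ is a sub-sum of nonnegative terms of $\mathcal{E}_\Gamma(f,f)$, so no boundary-correction argument is needed --- is exactly what the paper does, and the pigeonhole over the augmented pages, the ball-containment argument combining fixed width, uniformity of $\Gamma_{i^*}$ in $\Gamma$, and Lemma \ref{aug_quasiiso}, and the transfer of the Faber-Krahn function from $\Gamma_{i^*}$ to $\widehat{\Gamma}_{i^*}$ via Lemma \ref{FK_quasiiso_lem} all match the paper's argument. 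The only cosmetic difference is in how the center is produced: the paper first picks $y \in B \cap (\widehat{\Gamma}_j \setminus \Gamma_j)$ (which exists since $B$ is connected and not contained in $\Gamma_j$), proves $\widehat{\Omega}_j \subseteq \widehat{B}_j(y, C^*r)$, and then sets $\alpha = \Phi^{-1}(y)$, whereas you describe the center $\alpha$ directly; these are equivalent. Your worry in Case 1 about $\Gamma$-balls versus $\Gamma_i$-balls is legitimate (the paper's ``$B = B_i$'' is a slight abuse), and your fix via uniformity --- $B_\Gamma(z,r)\cap\Gamma_i \subseteq B_{\Gamma_i}(z, C_U r)$ --- is the right one.
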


\begin{remark}
We would like to thank the anonymous reviewer for pointing out that the uniformity hypothesis can be relaxed in this theorem, as well as in our later heat kernel upper bounds that rely on it. The hypothesis needed for the proof is that $d_i(x,y) \approx d_\Gamma(x,y)$ for all $i=1, \dots, l.$ In other words, we simply need distances in the pages to be comparable to distances in the entire glued graph. However, we continue to use the hypothesis of uniformity since these results are part of a larger program where this is the appropriate hypothesis. Even in this paper, the lower bound of Theorem \ref{spine_lower_bd} requires an inner uniformity assumption. Under the assumption necessary for the lower bound, the uniformity hypothesis in this theorem and the weaker hypothesis that $d_i(x,y) \approx d_\Gamma(x,y)$ for all $i=1, \dots, l$ are equivalent. 
\end{remark}

\begin{proof}
Let $B = B(z,r) \subset \Gamma$ and let $\Omega \subset B.$ Let $f$ be a function supported in $\Omega.$ For fixed $B,$ the function $\Lambda$ as defined in the lemma is non-increasing in $\nu$ since each $\Lambda_i$ is. 

If $B \subset \Gamma_i$ for some $1 \leq i \leq l,$ then $B = B_i.$ Recalling the cutting/gluing construction of $\Gamma$ ensures comparable weights, we compute:
\begin{align*}
\lVert \, |\nabla f | \, \rVert_2^2 &= \frac{1}{2} \sum_{x,y \in \Gamma} |f(x)-f(y)|^2 \mu^{\Gamma}_{xy} 
\geq a_1 \frac{1}{2} \sum_{x,y \in \Gamma_i} |f(x)-f(y)|^2 \, \mu^{i}_{xy} \\
&\geq a_1 \Lambda_i(B_i, \pi_i(\Omega)) \sum_{x \in \Gamma_i} |f(x)|^2 \, \pi_i(x) \\
&\geq a_1 \Lambda_i(B_i, \pi_i(\Omega)) \sum_{x \in \Gamma} |f(x)|^2 \, \pi_{\Gamma}(x).
\end{align*}
Above $a_1$ is some constant depending on the compatible weights that may change from line to line. Additionally, due to the compatible weights condition and the fact that $\Omega \subset \Gamma_i,$ we have $\pi_i(\Omega) \approx \pi_\Gamma(\Omega),$ proving the first part of the lemma for some appropriate constant $a_2,$ which again depends on the compatible weights hypothesis. 

Now suppose that $B \not \subseteq \Gamma_i$ for all $1 \leq i \leq l.$ Consider the augmented pages $\widehat{\Gamma}_i.$ Set $\widehat{\Omega}_i := \widehat{\Gamma}_i \cap \Omega$ and define 
\[ \widehat{I}_i := \sum_{x \in \widehat{\Omega}_i} |f(x)|^2 \; \widehat{\pi}_i(x) \ \Big( = \sum_{x \in \widehat{\Gamma}_i} |f(x)|^2 \; \widehat{\pi}_i(x)\Big).\]
(While there is some ambiguity in how $\widehat{\pi}_i$ is defined, all such definitions are comparable to each other and to $\pi_\Gamma$ up to constants.) 

Since the spine $\Gamma_0$ has fixed width, $\Gamma \subseteq \bigcup_{i=1}^l \widehat{\Gamma}_i.$ If $c_B, C_B$ are the constants controlling compatible weights in the cutting/gluing, then
\[ l \sum_{x \in \Gamma} |f(x)|^2 \pi_{\Gamma}(x) \geq \frac{1}{C_B} \sum_{i=1}^l \widehat{I}_i \geq \frac{c_B}{C_B} \sum_{x \in \Gamma} |f(x)|^2 \pi_{\Gamma}(x),\]
since each point in $\Gamma$ appears in at most all $l$ of the sets $\widehat{I}_i$ and as the weights $\widehat{\pi}_i$ are all comparable to $\pi_\Gamma.$ 

Fix $\varepsilon < (c_B/l).$ Then there exists at least one $j \in \{ 1 , \dots, l\}$ such that $\widehat{I}_j \geq \varepsilon \lVert f \rVert_{2}^2.$ In particular, for such $j,$ this means $f$ is not identically zero on $\widehat{\Gamma}_j.$ Hence $B$ must intersect $\widehat{\Gamma}_j$ and $j \in J_B.$ Further, $B \cap (\widehat{\Gamma}_j \setminus \Gamma_j) \not = \emptyset,$ since $B$ is connected and is not contained only in $\Gamma_j.$  Take $y \in B \cap (\widehat{\Gamma}_j \setminus \Gamma_j).$ 

We claim there exists a constant $C^*$ such that $\widehat{\Omega}_j \subseteq \widehat{B}_j(y, C^*r).$

First, $\widehat{\Omega}_j \subseteq \Omega \subseteq B_{\Gamma}(z,r) \subset B_{\Gamma}(y, 4r)$ since $y \in B_{\Gamma}(z,r).$ Let $w \in \widehat{\Omega}_j.$ Recall $\Gamma_j$ and $\widehat{\Gamma}_j$ are quasi-isometric: the constant $\delta$ from the spine width can be used in part (1) of the definition of quasi-isometry and there is come constant $a$ in (2) related the distances of $\Gamma_j$ and $\widehat{\Gamma}_j.$ Also, $\Gamma_j$ is uniform in $\Gamma$ with a constant $c_U$ relating their distances. Then, since $r \geq 1,$ 
\begin{align*} 
\widehat{d}_j (w, y) &\leq a d_j (\Phi^{-1}(w),\Phi^{-1}(y)) \leq a c_U d_\Gamma(\Phi^{-1}(w),\Phi^{-1}(y)) \\
&\leq a c_U \Big(d_\Gamma \big(\Phi^{-1}(w), w\big) + d_\Gamma(w,y) +d_\Gamma \big(y, \Phi^{-1}(y)\big)\Big)\\
&\leq ac_U(2 \delta+ 4 r) \leq ac_U(2\delta +4)r.
\end{align*}
Thus $w \in \widehat{B}_j(y, C^* r)$ for $C^* = a c_U (2\delta+4).$

Let  $\widehat{\Lambda}_j$ denote a Faber-Krahn function for $\widehat{\Gamma}_j.$ Assume the compatible edge weights condition is given by $c_m \, \mu_{xy}^\Gamma \leq \mu_{xy}^i \leq C_M \, \mu_{xy}^\Gamma.$ Then, by the above, our choice of the index $j,$ and Lemma \ref{FK_quasiiso_lem},
\begin{align*}
\lVert \, |\nabla f|\,  \rVert_2^2 &= \frac{1}{2} \sum_{x,y \in \Gamma} |f(x)-f(y)|^2 \mu^{\Gamma}_{xy} 
\geq \frac{1}{C_M} \sum_{x,y \in \widehat{\Gamma}_j} |f(x) - f(y)|^2 \; \widehat{\mu}^j_{xy}  \\
&\geq \frac{1}{C_M} \widehat{\Lambda}_j \big(\widehat{B}_j(y, C^* r), \widehat{\pi}_j(\widehat{\Omega}_j)\big) \sum_{x \in \widehat{\Gamma}_j} |f(x)|^2 \; \widehat{\pi}_j(x)  \\
&\geq \frac{\varepsilon}{C_M} \, \widehat{\Lambda}_j \big(\widehat{B}_j(y, C^* r), \widehat{\pi}_j(\widehat{\Omega}_j)\big) \sum_{x \in \Gamma} |f(x)|^2\,  \pi_{\Gamma}(x)  \\
&\geq c_1 \Lambda_j \big(B_j (\Phi^{-1}(y), c_2 r), c_3 \widehat{\pi}_j(\widehat{\Omega}_j)\big) \lVert f \rVert_2^2.
\end{align*} 
In the last line, we combined some constants. Now, $\widehat{\pi}_j(\widehat{\Omega}_j) \leq C_B \pi(\Omega),$ and Faber-Krahn functions are non-increasing in $\nu,$ so we may replace $c_3 \widehat{\pi}_j (\widehat{\Omega}_j)$ by $c_3 C_B \pi(\Omega)$ in $\Lambda_j$ above and then call $c_3 C_B$ again $c_3$.

It remains to show that we can control the expression involving $\Lambda_j$ above by $\overline{\Lambda}.$ The vertex $\Phi^{-1}(y)$ belongs to $\Gamma_j$ and lies within the $\delta$-neighborhoods of both $\Gamma_0$ and $B$ since $y \in \Gamma_0 \cap B.$ Therefore it is clear
\[ c_1 \Lambda_j (B_j (\Phi^{-1}(y), c_2 r), c_3 \pi(\Omega)) \geq \overline{\Lambda}(B,\nu).\]
\end{proof} 

\section{From glued Faber-Krahn functions to heat kernel estimates}\label{FK_to_HK}

Lemma \ref{FK_glue} in the previous section gives a relative Faber-Krahn function for glued graphs under relatively mild assumptions. In this section, we use Lemma \ref{FK_glue} to obtain heat kernel upper bounds for $\Gamma$ in the case that the pages $\Gamma_1, \dots, \Gamma_l$ are sufficiently nice. Here the meaning of ``sufficiently nice'' is that the pages are Harnack graphs. Section \ref{Harnack_graphs_section} defines relevant additional hypotheses, and Section \ref{Harnack_FK_section} provides heat kernel estimates that follow from the glued Faber-Krahn function of Lemma \ref{FK_glue}. 

\subsection{Harnack graphs and Faber-Krahn}\label{Harnack_graphs_section}

\begin{definition}[Doubling]
A graph is said to be \emph{doubling} if there exists a constant $D$ such that for all $r>0, \ x \in \Gamma,$ 
\begin{equation}
V(x,2r) \leq D V(x,r).
\end{equation}
\end{definition}

\begin{definition}[Poincar\'{e} inequality]
We say that $\Gamma$ satisfies the \emph{Poincar\'{e} inequality} if there exist constants $C_p >0, \ \kappa \geq 1$ such that for all $r > 0,\ x \in \Gamma,$ and functions $f$ supported in $B(x,\kappa r),$ 
\begin{equation}
\sum_{y \in B(x,r)} |f(y) - f_B|^2 \, \pi(y) \leq C_p \ r^2 \sum_{y,z \in B(x, \kappa r)} |f(y)-f(z)|^2 \, \mu_{yz},
\end{equation}
where $f_B$ is the (weighted) average of $f$ over the ball $B=B(x,r),$ that is,
\[ f_B = \frac{1}{V(x,r)} \sum_{y \in B(x,r)} f(y)\, \pi(y).\]
\end{definition}

Under doubling, the Poincar\'{e} inequality with constant $\kappa \geq 1$ (which appears in $B(x, \kappa r)$ on the right hand side) is equivalent to the Poincar\'{e} inequality with $\kappa = 1.$ For a proof of this fact in the continuous case, see Theorem 5.3.4 of \cite{lsc_sobolevbook}. For the discrete case, see Corollary A.51 of \cite{Barlow_graphs}; the corollary assumes the graph has controlled weights as we do in this paper.

\begin{definition}[Harmonic function]
A function $h: \Gamma \to \R$ is \emph{harmonic} (with respect to $\mathcal{K}$) if 
\begin{equation}\label{harmonic} h(x)  = \sum_{y \in \Gamma} \mathcal{K}(x,y) h(y) \quad \forall x \in \Gamma.\end{equation}
Given a subset $\Omega$ of $\Gamma$ (usually a ball), $h$ is harmonic on that set if (\ref{harmonic}) holds for all points in $\Omega$; this requires that $h$ be defined on $\{v\in \Gamma: \exists \omega\in \Omega,\ v\simeq \omega\} = \Omega \cup \partial \Omega$. 
\end{definition}
As $\mathcal{K}(x,y) = 0$ unless $y \simeq x,$ the sum over $y \in \Gamma$ in (\ref{harmonic}) can be replaced by a sum over $y \simeq x.$ 

\begin{definition}[Elliptic Harnack inequality]
A graph $(\Gamma, \pi, \mu)$ satisfies the \emph{elliptic Harnack inequality} if there exist $\eta \in (0,1),\ C_H>0$ such that for all $r>0, \ x_0 \in \Gamma,$ and all non-negative harmonic functions $h$ on $B(x_0, r),$ we have 
\[ \sup_{B(x_0,\, \eta r)} h \leq C_H \inf_{B(x_0,\, \eta r)} h. \]

We say a graph satisfies the elliptic Harnack inequality \emph{up to scale} $R>0$ if the above definition holds for $0<r<R,$ where the constant $C_H$ depends upon $R.$ 
\end{definition}

\begin{definition}[(sub)Solution of discrete heat equation]\label{subsoln}
A function $u: \Z_{+} \times \Gamma \to \R$ \emph{solves the discrete heat equation} if
\begin{equation}\label{heat_eqn} u(n+1, x) - u(n,x) = \sum_{y \in \Gamma} \mathcal{K}(x,y)[ u(n,y) - u(n,x)]  \quad \forall n \geq 1, \ x \in \Gamma.\end{equation}
Given a discrete space-time cylinder $Q = I \times B,$ $u$ solves the heat equation on $Q$ if (\ref{heat_eqn}) holds there (this requires that for each $n\in I$, $u(n,\cdot)$ is defined on $\{z\in \Gamma: \exists x \in B,\ z\simeq x\} = B \cup \partial B$).

Similarly, a function $u$ is a \emph{subsolution} of the discrete heat equation on space-time cylinder $Q$ if for each $n \in I, \ u(n, \cdot)$ is defined on $\{z\in \Gamma: \exists x \in B,\ z\simeq x\} = B \cup \partial B$  and $u$ satisfies (\ref{heat_eqn}) with the equality sign replaced by a $\leq$. 

Notice also in (\ref{heat_eqn}) it is possible to cancel the $u(n,x)$ from both sides. 
\end{definition} 

\begin{definition}[Parabolic Harnack inequality]
A graph $(\Gamma, \pi, \mu)$ satisfies the (discrete time and space) \emph{parabolic Harnack inequality} if: there exist $\eta \in (0,1), \ 0 < \theta_1 < \theta_2 < \theta_3 < \theta_4$ and $C_{PH}>0$ such that for all $s,r >0, \ x_0 \in \Gamma,$ and every non-negative solution $u$ of the heat equation in the cylinder $Q = (\Z_{+} \cap [s, s+ \theta_4 r^2]) \times B(x_0,r),$ we have
\[ u(n_-, x_-) \leq C_{PH} \, u(n_+, x_+) \]
for all $(n_-, x_-) \in Q_-, \ (n_+, x_+) \in Q_+$ such that $d(x_-, x_+) \leq n_+ - n_- ,$ where 
\begin{align*}
&Q_- = (\Z_+ \cap [ s + \theta_1 r^2, s + \theta_2 r^2]) \times B(x_0, \eta r)\\ 
&Q_+ = (\Z_+ \cap [ s+ \theta_3 r^2, s+ \theta_4 r^2]) \times B(x_0, \eta r).
\end{align*}

We say a graph satisfies the parabolic Harnack inequality \emph{up to scale} $R>0$ if the above definition holds for $0<r<R,$ where the constant $C_{PH}$ now depends on $R$.
\end{definition}

The parabolic Harnack inequality obviously implies the elliptic version. Any graph with controlled weights satisfies the parabolic Harnack inequality at scale $1$, and therefore satisfies the parabolic Harnack inequality up to any finite scale $R$. The following theorem relates several of the above inequalities. 

\begin{theorem}[Theorem 1.7 in \cite{Delmotte_PHI}]\label{VD_PI}
Given $(\Gamma, \pi, \mu)$  (or $(\Gamma, \mathcal{K}, \pi)$) where $\Gamma$ has controlled weights and $\mathcal K$ is uniformly lazy, the following are equivalent: 
\begin{enumerate}
\item[(a)] $\Gamma$ is  doubling and satisfies the Poincar\'{e} inequality 
\item[(b)] $\Gamma$ satisfies the parabolic Harnack inequality
\item[(c)] $\Gamma$ satisfies two-sided Gaussian heat kernel estimates, that is there exists constants $c_1, c_2, c_3, c_4 >0$ such that for all $x,y \in \Gamma, \ n \geq d(x,y),$
\begin{equation}\label{2sided_graphs}
\frac{c_1 }{V(x, \sqrt{n})} \exp\Big(-\frac{d^2(x,y)}{c_3 n}\Big) \leq p(n,x,y) \leq \frac{c_3}{V(x, \sqrt{n})} \exp\Big(-\frac{d^2(x,y)}{c_4 n}\Big).
\end{equation}
\end{enumerate}
\end{theorem}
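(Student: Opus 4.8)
This statement is Delmotte's discrete analogue of the Grigor'yan--Saloff-Coste characterization of the parabolic Harnack inequality, and the plan is to establish the circle of implications $(a)\Rightarrow(b)\Rightarrow(c)\Rightarrow(a)$. Since the theorem is quoted verbatim as Theorem 1.7 of \cite{Delmotte_PHI}, in the paper itself it suffices to cite that reference; what follows is the shape of the argument one would run.

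For $(a)\Rightarrow(b)$, the approach is Moser iteration adapted to the discrete heat equation \eqref{heat_eqn}. From doubling and the Poincar\'e inequality one first extracts, on each ball, a family of Sobolev--Nash type inequalities. Paired with a discrete Caccioppoli (energy) estimate for nonnegative subsolutions of \eqref{heat_eqn}, a Moser iteration bounds $\sup_{Q_+} u$ by a space-time $\ell^2$ average of $u$; running the scheme with negative exponents bounds a negative $\ell^2$ average of $u$ from below by $\inf_{Q_-} u$ for nonnegative solutions. The two halves are then spliced by an abstract lemma of Bombieri--Giusti type, for which one needs a logarithmic energy estimate placing $\log u$ in BMO (via the Poincar\'e inequality) together with a John--Nirenberg argument. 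Care must be taken that \eqref{heat_eqn} couples $u(n,\cdot)$ and $u(n+1,\cdot)$ only through one-neighborhoods, so the time cut-offs and the nested cylinders $Q_-\subset Q\supset Q_+$ must be manipulated with the integer time parameter kept explicit.

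For $(b)\Rightarrow(c)$: applying the parabolic Harnack inequality to the caloric functions $(m,z)\mapsto p(m,z,y)$ and $(m,z)\mapsto p(m,z,x)$ yields both a near-diagonal heat kernel lower bound and the doubling property $V(x,2r)\lesssim V(x,r)$, via the on-diagonal estimate $p(n,x,x)\approx 1/V(x,\sqrt n)$ (whose upper half also uses mass conservation $\sum_z p(n,x,z)\pi(z)=1$ and reversibility). Chaining the Harnack inequality along $O(d(x,y)^2/n)$ balls of radius $O(\sqrt n)$ upgrades the near-diagonal lower bound to the Gaussian lower bound in \eqref{2sided_graphs}; the Gaussian \emph{upper} bound then follows from the on-diagonal upper bound by a Davies--Gaffney integrated-energy (weighted $\ell^2$) argument. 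For $(c)\Rightarrow(a)$: doubling comes from Chapman--Kolmogorov, since $1/V(x,\sqrt n)\lesssim p(2n,x,x)=\sum_z p(n,x,z)^2\pi(z)\lesssim 1/V(x,\sqrt n)$, and comparing this at scales $r$ and $2r$ with the Gaussian tails forces $V(x,2r)\lesssim V(x,r)$; the Poincar\'e inequality is recovered from the two-sided bounds by the standard argument showing $\lVert f-e^{r^2\Delta}f\rVert_2^2\lesssim r^2\,\cE(f,f)$ on the scale of $B(x,r)$ and converting this, via an oscillation estimate, into the stated discrete inequality.

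The main obstacle is $(a)\Rightarrow(b)$, and within it the genuinely discrete features: integer-valued distance means balls cannot be shrunk arbitrarily, so the iteration must be anchored at scale $1$. This is precisely where controlled weights and uniform laziness (hence aperiodicity) enter, since they guarantee the parabolic Harnack inequality at scale $1$ for free and let both the iteration and the chaining arguments start; all estimates are then genuinely uniform across scales $r\ge 1$. These points are handled carefully in \cite{Delmotte_PHI}.
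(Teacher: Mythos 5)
You correctly observe that the paper offers no proof of this statement and simply cites Theorem 1.7 of \cite{Delmotte_PHI}, and your outline of Delmotte's argument --- Moser iteration from doubling plus Poincar\'e to the parabolic Harnack inequality, chaining and Davies--Gaffney for the two-sided Gaussian bounds, and recovering doubling and Poincar\'e from the heat kernel estimates, with controlled weights and uniform laziness anchoring the iteration at scale~$1$ and handling aperiodicity --- matches the structure of the cited proof. No further comparison is needed.
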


\begin{definition}[Harnack graph]\label{Harnack_graph}
If $(\Gamma, \pi, \mu)$ satisfies any of the conditions in Theorem \ref{VD_PI}, we call $\Gamma$ a \emph{Harnack graph}. 
\end{definition}

\begin{remark}
The uniformly lazy assumption avoids problems related to parity (such as those that appear in bipartite graphs). Without this assumption, it may be that (a) holds but $p(n,x,y) = 0$ for some $n \geq d(x,y),$ and then (b) and the lower bound in (c) do not hold. Here we avoid such difficulties by assuming the graph is uniformly lazy; another solution to this problem is to state (b) and (c) for the sum over two consecutive discrete times $n,\ n+1$, e.g., for (c), $p(n,x,y) + p(n+1,x,y).$ 
\end{remark}

Harnack graphs have relative Faber-Krahn functions of a particularly nice form. 

\begin{theorem}[Faber-Krahn Function of a Harnack Graph, \cite{TC_AG_GraphVol}]
If $\Gamma$ is a Harnack graph, then there exist constants $a, \alpha>0$ such that for all balls $B(z,r) \subseteq \Gamma$ and all $\nu >0,$
\begin{equation}\label{Harnack_FK}
\Lambda(B(z,r), \nu) = \frac{a}{r^2} \bigg(\frac{V(z,r)}{\nu}\bigg)^{\alpha}.
\end{equation}
Conversely, if $\Gamma$ has a relative Faber-Krahn function of form (\ref{Harnack_FK}) above, then $\Gamma$ is a Harnack graph. 
\end{theorem}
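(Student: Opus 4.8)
The plan is to read off both implications from Theorem~\ref{VD_PI}, so that I may use interchangeably ``Harnack graph,'' ``volume doubling $+$ Poincaré,'' and ``two-sided Gaussian estimates~\eqref{2sided_graphs}.'' A harmless first reduction: replacing $\mathcal{K}$ by $\tfrac12(I+\mathcal{K})$ changes $\lambda_1(\Omega)$ by a factor $\tfrac12$ and preserves the reversible measure, the Harnack property, and a relative Faber-Krahn function up to constants; so I may assume $\mathcal{K}(x,x)\ge\tfrac12$, whence $\mathcal{K}$ restricted to any finite $\Omega$ is a nonnegative self-adjoint contraction on $L^2(\Omega,\pi)$ and $1-\lambda_1(\Omega)\in[0,1]$.

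\emph{Harnack $\Rightarrow$ relative Faber-Krahn.} Fix $B=B(z,r)$ and $\Omega\subseteq B$, and let $\phi\ge 0$ be a first Dirichlet eigenfunction in $\Omega$ with $\lVert\phi\rVert_2=1$. Two elementary facts drive everything. First, Cauchy--Schwarz gives $\lVert\phi\rVert_1\le\pi(\Omega)^{1/2}$, hence $\lVert\phi\rVert_\infty\ge\lVert\phi\rVert_2^2/\lVert\phi\rVert_1\ge\pi(\Omega)^{-1/2}$. Second, writing $P^\Omega_n$ for the Dirichlet heat semigroup, self-adjointness and Chapman--Kolmogorov give $\lVert P^\Omega_n\rVert_{2\to\infty}^2=\sup_{x\in\Omega}p^\Omega_{2n}(x,x)\le\sup_{x\in\Omega}p_{2n}(x,x)\le C\big(\inf_{x\in\Omega}V(x,\sqrt{2n})\big)^{-1}$, the last step being the on-diagonal part of the Gaussian upper bound in~\eqref{2sided_graphs}. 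Since $P^\Omega_n\phi=(1-\lambda_1(\Omega))^n\phi$, combining the two facts yields
\[
(1-\lambda_1(\Omega))^n\;\le\;\Big(\frac{C\,\pi(\Omega)}{\inf_{x\in\Omega} V(x,\sqrt{2n})}\Big)^{1/2},
\qquad n\ge 1.
\]
Now volume doubling enters: since $\Omega\subseteq B(z,r)$ one has $B(z,r)\subseteq B(x,2r)$ for every $x\in\Omega$, so iterating the doubling inequality gives $\inf_{x\in\Omega}V(x,\rho)\ge c_0(\rho/2r)^\beta V(z,r)$ for $1\le\rho\le 2r$, with $\beta$ the doubling exponent. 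Taking $n$ the largest integer with $\sqrt{2n}\le 2r$ and $\inf_{x\in\Omega}V(x,\sqrt{2n})\ge e^2C\,\pi(\Omega)$ makes the right-hand side above at most $1/e$, so $(1-\lambda_1(\Omega))^n\le e^{-1}$ and therefore $\lambda_1(\Omega)\ge 1-e^{-1/n}\ge(2n)^{-1}$; the defining property of $n$ together with the volume lower bound gives $n\lesssim r^2(\pi(\Omega)/V(z,r))^{2/\beta}$, which is exactly the claimed bound $\lambda_1(\Omega)\ge\tfrac{a}{r^2}(V(z,r)/\pi(\Omega))^\alpha$ with $\alpha=2/\beta$. (Monotonicity of $\Lambda$ in $\nu$ is clear.) The degenerate cases where no such $n\ge1$ exists --- $\pi(\Omega)$ very small relative to unit-scale volumes, or $r$ of unit order --- are handled directly: there $\lambda_1(\Omega)$ is bounded below by a positive constant, while doubling ($V(z,r)\lesssim r^\beta\pi(x)$ for $x\in\Omega$, $\pi(x)\le\pi(\Omega)$, $\alpha\beta=2$) keeps $\tfrac{1}{r^2}(V(z,r)/\pi(\Omega))^\alpha$ bounded, so a small enough $a$ works.

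\emph{Relative Faber-Krahn of the form~\eqref{Harnack_FK} $\Rightarrow$ Harnack.} Here I would invoke machinery already in play. First, Theorem~\ref{FK_implies_HK} converts the relative Faber-Krahn function into the on-diagonal upper bound $p(n,x,x)\le C/V(x,\sqrt n)$ (with the polynomial profile~\eqref{Harnack_FK}, in fact the full Gaussian upper bound). Second --- the substantive step --- the relative Faber-Krahn inequality implies volume doubling and the Poincaré inequality, the mechanism (Coulhon--Grigor'yan, \cite{TC_AG_GraphVol}) being to play the polynomial lower bound on $\lambda_1$ against the elementary upper bound $\lambda_1(\Omega)\lesssim r^{-2}\cdot(\text{ratio of volumes of slightly dilated/contracted balls})$ coming from a Lipschitz cutoff; this produces a self-improving inequality on dyadic volume ratios that, seeded by the trivial control at unit scale furnished by controlled weights, propagates to all scales, and the Poincaré inequality is extracted similarly with the aid of the heat-kernel upper bound that~\eqref{Harnack_FK} already supplies. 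Once volume doubling and Poincaré hold, Theorem~\ref{VD_PI} gives that $\Gamma$ is a Harnack graph.

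The main obstacle is precisely the volume-doubling (and Poincaré) extraction in the converse: the naive iteration on dyadic volume ratios is self-improving only when the Faber-Krahn exponent $\alpha$ exceeds $1$, which fails in most dimensions, so one genuinely needs the Coulhon--Grigor'yan coupling of the Faber-Krahn inequality with the heat-kernel upper bound it forces --- together with the care about discreteness and parity that is exactly why uniform laziness is assumed. By contrast the forward direction is routine once the Gaussian upper bound and volume doubling are granted.
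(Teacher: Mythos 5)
The paper does not give a proof of this theorem; it is cited from Coulhon--Grigor'yan \cite{TC_AG_GraphVol}, so there is no in-paper argument to compare against. Evaluating your proposal on its own terms:

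Your forward direction (Harnack $\Rightarrow$ Faber--Krahn) is the right strategy and essentially the one found in the cited reference: dualize the on-diagonal Gaussian upper bound (available via Theorem~\ref{VD_PI}) into an $L^2\to L^\infty$ bound for the Dirichlet semigroup, test against the first eigenfunction, and close with doubling. The reduction to $\mathcal{K}(x,x)\ge\tfrac12$, the duality $\lVert\phi\rVert_\infty\ge\pi(\Omega)^{-1/2}$, and the eigenvalue identity $P^\Omega_n\phi=(1-\lambda_1(\Omega))^n\phi$ are all sound. The choice of the time $n$ needs more care: besides the two degenerate cases you list (tiny $\pi(\Omega)$, $r$ of unit order), there is a third in which no admissible $n$ exists, namely when $\pi(\Omega)$ is comparable to $V(z,r)$ (e.g.\ $\Omega=B(z,r)$ with $r$ large). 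There the threshold $\inf_{x\in\Omega}V(x,\sqrt{2n})\ge e^{2}C\,\pi(\Omega)$ can fail for every $n$ with $\sqrt{2n}\le 2r$, yet $\lambda_1(\Omega)$ is certainly not bounded below by a constant; the claim reduces to $\lambda_1(B(z,r))\gtrsim r^{-2}$ and has to be supplied separately (it follows, e.g., from the Gaussian bounds or Poincar\'e). This is a fixable gap, not a wrong idea.

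The converse you only sketch, and there is a more substantive problem. You assert that the relative Faber--Krahn inequality implies doubling \emph{and} Poincar\'e, attributing this to \cite{TC_AG_GraphVol}; but what Coulhon--Grigor'yan actually prove is that a relative Faber--Krahn function of form~(\ref{Harnack_FK}) is equivalent, under controlled weights, to doubling together with the Gaussian \emph{upper} bound. That package does not yield the Poincar\'e inequality or the Gaussian \emph{lower} bound, hence not the full Harnack property. Two copies of $\Z^{d}$ ($d\ge 3$) joined at one vertex are doubling and satisfy the Gaussian upper bound, so they admit a Faber--Krahn function of the form~(\ref{Harnack_FK}), yet the two-sided Gaussian estimate fails between points in different copies. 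So the converse as stated appears to overshoot what the cited result gives, and your sketch inherits that gap by promising a Poincar\'e extraction that the Faber--Krahn hypothesis alone does not provide. (This has no bearing on the rest of the paper, which uses only the forward implication.)
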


\subsection{Heat kernel upper bounds for glued Harnack graphs}\label{Harnack_FK_section}

Assume $\Gamma$ is a graph with pages $\Gamma_1, \dots, \Gamma_l$ satisfying all hypotheses of Lemma \ref{FK_glue} \textbf{and} the additional hypothesis that all pages are Harnack. Then, for each $i$, $\Gamma_i$ has a Faber-Krahn function $\Lambda_i$ of the form (\ref{Harnack_FK}) with constants $a_i, \alpha_i.$ Consequently, the function $\overline{\Lambda}$ can be written terms of volume functions of the pages: 
\begin{align*}
\overline{\Lambda}(B,\nu) &=  c_1 \min_{i \in J_B}\, \min_{y \in [B]_{\delta} \cap \Gamma_i \cap [\Gamma_0]_{\delta}} \Lambda_i (B_i(y, c_2 r), c_3 \nu) \\
&=  c_1 \min_{i \in J_B}\, \min_{y \in [B]_{\delta} \cap \Gamma_i \cap [\Gamma_0]_{\delta}} \frac{a_i}{c_2^2 r^2} \,\bigg( \frac{V_i(y, c_2r)}{c_3 \nu}\bigg)^{\alpha_i} \\[1ex]
&\approx \min_{i \in J_B}\, \min_{y \in [B]_{\delta} \cap \Gamma_i \cap [\Gamma_0]_{\delta}} \frac{C}{r^2} \, \bigg( \frac{V_i(y,r)}{\nu}\bigg)^{\alpha_i}.
\end{align*}
The last line uses that each $V_i$ is doubling and $\Gamma$ has a finite number of pages, so that $V_i(y, c_2 r) \approx V_i(y, r)$ (since $c_2$ is a fixed constant). 

Define 
\begin{equation}\label{V_min}
V_{\min}(z,r) := \min_{i \in J_B} \min_{y \in [B]_{\delta} \cap \Gamma_i \cap [\Gamma_0]_{\delta}} V_i(y,r)
\end{equation}
and set
\begin{align}\label{fcn_F}
F(z,r) := \begin{cases} V_i(z, r) = V_{\Gamma}(z,r), & \text{ if } B \subset \Gamma_i \text{ for some } 1 \leq i \leq l \\
V_{\min}(z,r), & \text{ else.} \end{cases}
\end{align}

\begin{corollary}\label{FKform_Harnackpg}
Let $(\Gamma, \mu, \pi)$ be a graph with pages $\Gamma_1, \dots, \Gamma_l$ and a fixed width spine $\Gamma_0$. Assume each page $\Gamma_i$ is Harnack and is uniform in $\Gamma.$ Set $\alpha = \min_{1 \leq i \leq l} \alpha_i,$ where $\alpha_i$ is the exponent appearing in the relative Faber-Krahn function of $\Gamma_i$ as in (\ref{Harnack_FK}). Let $V_{\min}, \ F$ be defined as in (\ref{V_min}), (\ref{fcn_F}), respectively. Then a relative Faber-Krahn function for $\Gamma$ is given by
\begin{equation}\label{FK_withF}
\Lambda(B(z,r), \nu) = \frac{C}{r^2} \Big(\frac{F(z,r)}{\nu}\Big)^\alpha.
\end{equation}
\end{corollary}

The corollary follows directly from Lemma \ref{FK_glue} and the above observations. 

\begin{theorem}\label{FK_implies_HK}
Assume $\Gamma$ is a graph with a relative Faber-Krahn function given by (\ref{FK_withF}) as in Corollary \ref{FKform_Harnackpg} above. Then for all $x,y \in \Gamma, \ n >0,$
\begin{align}\label{HK_upper_FK}
p(n,x,y) \leq \frac{c_1}{\sqrt{F(x, \sqrt{n}) F(y, \sqrt{n})}} \exp\Big(-\frac{d^2(x,y)}{c_2 n}\Big).
\end{align}
\end{theorem}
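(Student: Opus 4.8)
The plan is to deduce the off-diagonal Gaussian upper bound \eqref{HK_upper_FK} from the relative Faber--Krahn inequality \eqref{FK_withF} by following the now-standard Grigor'yan-type machinery (as developed in \cite{ag_upperbds, ag_localHarnack}), carried out in the discrete setting. The argument proceeds in three stages: first a relative Faber--Krahn inequality is converted to an \emph{on-diagonal} upper bound $p(n,x,x) \le c/F(x,\sqrt n)$; then one symmetrizes to get $p(n,x,y) \le c/\sqrt{F(x,\sqrt n)F(y,\sqrt n)}$; and finally one inserts the Gaussian factor $\exp(-d^2(x,y)/c_2 n)$ via an integrated-maximum-principle / Davies--Gaffney argument using a weighted $\ell^2$ norm. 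Since the statement as given is proved in Appendix~\ref{FK_implies_HK_app}, here I only sketch the route.

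For the on-diagonal bound: fix $x \in \Gamma$ and consider the ``truncated'' Dirichlet heat kernels on balls $B(x, r_j)$ for a suitable increasing sequence of radii $r_j \sim 2^{j/2}\sqrt n$ (or one works directly with the global heat kernel and a Nash-type iteration). The key mechanism is that for a nonnegative solution $u$ of the heat equation supported in a ball, the Faber--Krahn inequality $\lambda_1(\Omega) \ge \Lambda(B, \pi(\Omega))$ with $\Lambda(B,\nu) = \frac{C}{r^2}(F(z,r)/\nu)^\alpha$ forces a differential inequality for $\|u(n,\cdot)\|_2^2$ of the form $\|u_{n+1}\|_2^2 - \|u_n\|_2^2 \le -\lambda_1(\{u_n \ne 0\})\|u_n\|_2^2$, and then a Moser/Nash iteration over dyadic time scales produces $\|P_n\|_{1\to\infty} \le c/F(x,\sqrt n)$, i.e. $p(n,x,x)\le c/F(x,\sqrt n)$. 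One must be careful that $F(x,r) = V_{\min}(z,r)$ or $V_i(z,r)$ is only \emph{roughly} monotone and satisfies a doubling-type bound inherited from the doubling of the $V_i$; these regularity properties of $F$ (monotone up to constants in $r$, polynomial-type growth control, behaviour under $r \mapsto cr$) are exactly what make the iteration close, and I would isolate them as a preliminary lemma. Symmetry of the resulting bound follows from $p(n,x,y)=p(n,y,x)$ combined with $p(2n,x,y) = \sum_z p(n,x,z)p(n,z,y)\pi(z) \le \sqrt{p(2n,x,x)p(2n,y,y)}$ via Cauchy--Schwarz and reversibility.

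For the Gaussian factor: this is the classical Davies method adapted to graphs. One introduces, for a parameter $\lambda>0$ and a fixed pair $x,y$, the function $\psi(z) = \lambda\bigl(d(x,z) \wedge d(x,y)\bigr)$, considers the weighted heat kernel $e^{-\psi}p_n e^{\psi}$, and uses that on graphs the ``carr\'e du champ'' error terms are controlled by $e^{2\Lip(\psi)}-1 \approx \lambda^2$ for small $\lambda$ because of controlled weights (bounded geometry). This yields $p(n,x,y) \le \sqrt{p(2n,x,x)p(2n,y,y)}\,\exp(\lambda d(x,y) + c\lambda^2 n)$, and optimizing $\lambda \sim d(x,y)/n$ gives the stated exponential; feeding in the on-diagonal estimate $p(2n,x,x)\le c/F(x,\sqrt{2n}) \approx c/F(x,\sqrt n)$ (doubling of $F$) finishes it.

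\textbf{Main obstacle.} The essential difficulty --- and the reason this is relegated to an appendix --- is that $F$ is a genuinely spatially inhomogeneous profile built from a minimum of volume functions of \emph{different} pages with \emph{different} Faber--Krahn exponents $\alpha_i$, so it is not itself the volume function of any nice graph and the usual ``$VD+FK \Rightarrow$ heat kernel'' black boxes do not apply verbatim. One must check that $F(z,r)$ still enjoys the structural properties needed to run the Nash iteration and the Davies perturbation: (i) $F(z,r) \approx F(z',r)$ for $d(z,z') \le r$, (ii) $F(z,2r) \le C F(z,r)$ (reverse doubling is \emph{not} needed, only forward doubling), and (iii) that the minimum over $y\in[B]_\delta\cap\Gamma_i\cap[\Gamma_0]_\delta$ and over $i\in J_B$ interacts well with changing the center and radius. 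Establishing these comparison properties from the doubling of the individual $V_i$, the finiteness of the number of pages, the fixed-width hypothesis on the spine, and the comparability of weights along the gluing --- and then verifying that the exponent $\alpha=\min_i\alpha_i$ is the correct one to push through the iteration uniformly in the ``page'' structure --- is the technical heart of the proof.
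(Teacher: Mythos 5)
Your outline follows the same overall strategy as the paper's appendix proof: a relative Faber--Krahn inequality of type \eqref{FK_withF} is converted into an $L^2 \to L^\infty$ mean-value inequality via the Coulhon--Grigor'yan iteration (Lemma \ref{L_2_toL_infinity}, importing \cite{TC_AG_GraphVol}), and the Gaussian factor is produced by a weighted-$\ell^2$ integrated-maximum-principle argument in the spirit of \cite{ag_upperbds} using the discrete results of \cite{TC_AG_Zucca_MaxPrin} (Lemmas \ref{disc_max_prin}--\ref{est_with_ball}).

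The one place your diagnosis diverges from the paper is instructive and worth flagging. You identify as ``the technical heart'' a preliminary lemma establishing regularity of $F$ --- doubling $F(z,2r)\lesssim F(z,r)$, comparability $F(z,r)\approx F(z',r)$ for $d(z,z')\le r$, and good behaviour of the minimum over pages under change of center and radius --- on the grounds that $F$ is not a genuine volume function. Your worry is legitimate in general (indeed Example \ref{cross_ex} shows $V_{\min}$ can even fail to be non-decreasing in $r$), but the paper's proof sidesteps it almost entirely. In the iteration of \cite{TC_AG_GraphVol} the quantity $\beta = aR^{-2}V(z,R)^\alpha$ is treated as a constant once the center $z$ and radius $R$ are fixed; replacing $V(z,R)$ by $F(z,R)$ therefore costs nothing except the single inequality $F(z,R)/\pi(z) \le cR^{2/\alpha}$ (equivalently $\widetilde\beta \le c\,\pi(z)^\alpha$), and this drops straight out of the Faber--Krahn inequality \eqref{FK_withF} applied to the singleton $\Omega=\{z\}$. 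Because the resulting mean-value inequality is only ever invoked at a fixed center and a single radius $R=\sqrt{2k}$, and the integrated-maximum-principle step never compares $F$ at two different centers or scales, no doubling or Lipschitz-in-$z$ regularity of $F$ enters the argument at all (beyond a trivial parity adjustment at the end). So your plan is correct, but the regularity lemma you would isolate turns out to be unnecessary; the real work is simply checking that the one volume-dependent inequality in \cite{TC_AG_GraphVol} survives with $F$ in place of $V$, which is essentially free.
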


There is a significant amount of literature on the relationship between Faber-Krahn functions and heat kernel upper bonds, particularly in the work of Alexander Grigor'yan and various coauthors. However, the literature does not quite cover the case given here.  Proving Theorem \ref{FK_implies_HK} from Corollary \ref{FKform_Harnackpg} requires several technical lemmas and small modifications of existing work; the proof is given in Appendix \ref{FK_implies_HK_app}.

\section{The \texorpdfstring{$\Gamma$}{glued graph} heat kernel from \texorpdfstring{$\Gamma_0$}{the spine} to \texorpdfstring{$\Gamma_0$}{the spine}} \label{spine_hk}

While the heat kernel estimate of Theorem \ref{FK_implies_HK} holds for any $x,y \in \Gamma,$ in practice we expect this upper bound to be optimal only in certain regimes. In particular, we want to use Theorem \ref{FK_implies_HK} to obtain upper bounds on the spine-to-spine heat kernel, that is, on $p(n,v,w)$ where both $v$ and $w$ belong to the spine $\Gamma_0.$ However, even for this spine-to-spine heat kernel, the upper bound provided by Faber-Krahn is not necessarily optimal unless additional hypotheses about the nature of the gluing and pages are made. 

We show the Faber-Krahn upper estimate on the spine-to-spine heat kernel is optimal in the case of certain book-like graphs and when all pages are the same. Under some additional assumptions, we prove optimality by proving matching lower bounds using a local parabolic Harnack inequality. This spine-to-spine heat kernel estimate will be crucial for our future work obtaining full matching upper and lower heat kernel estimates on book-like graphs. The main reason we can show optimality in the book-like case (and a few others) is that the computation of $V_{\min}$ simplifies significantly.

\subsection{Heat Kernel Upper Bounds for Book-like Graphs}

Assume $\Gamma$ is a $\delta$-book-like graph with pages $\Gamma_1, \dots, \Gamma_l$ and spine $\Gamma_0.$ 

Then for any $z \in \Gamma_0, r>\delta,$ and $B=B(z,r),$ the set $J_B = \{1, 2, \dots, l\}$ since balls around $z \in \Gamma_0$ always see all pages. Therefore the function $V_{\min}$ defined by (\ref{V_min}) becomes
\begin{equation}\label{Vmin_booklike}
V_{\min}(z,r) := \min_{1 \leq i \leq l}\, \min_{y \in [B]_{\delta} \cap \Gamma_i \cap [\Gamma_0]_{\delta}} V_i(y,r),
\end{equation}

Combining Lemma \ref{FK_glue} and Theorem \ref{FK_implies_HK} yields the following theorem:
\begin{theorem}[Book-like spine-to-spine heat kernel upper bound]\label{FK_booklike}
Let $(\Gamma, \mu,\pi)$ be a $\delta$-book-like graph with pages $\Gamma_1, \dots, \Gamma_l$ and spine $\Gamma_0$ where the pages $\Gamma_i$ are Harnack and uniform in $\Gamma$ for each $i=1,\dots, l$. Then for any $v,w \in \Gamma_0$ and $m \gg d_\Gamma(v,w) + \delta,$
\begin{align}\label{UB_sqrt}
p(m,v,w) &\leq \frac{c_1}{\sqrt{V_{\min}(v,\sqrt{m}) V_{\min}(w, \sqrt{m})}} \exp\Big(-\frac{d_\Gamma^2(v,w)}{c_2m}\Big)\\
&\leq \frac{c_1}{\min_{1 \leq i \leq l} V_i(v_i, \sqrt{m})} \exp\Big(-\frac{d_\Gamma^2(v,w)}{c_2m}\Big),
\end{align}
where $v_i$ is the/a closest point in $\Gamma_i$ to $v$.
\end{theorem}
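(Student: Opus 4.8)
The plan is to read the heat-kernel estimate off two results already in place: the book-like Faber--Krahn function of Corollary \ref{FKform_Harnackpg} and the Faber--Krahn-to-heat-kernel implication of Theorem \ref{FK_implies_HK}. The only work specific to this statement is checking that the two volume factors $F(v,\sqrt m)$ and $F(w,\sqrt m)$ collapse to $V_{\min}$, and then simplifying $V_{\min}$ at a spine point using the book-like hypothesis together with doubling of the pages.

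First I would note that a $\delta$-book-like graph has a spine of fixed width $\delta$, so with the pages $\Gamma_i$ Harnack and uniform in $\Gamma$ the hypotheses of Corollary \ref{FKform_Harnackpg} hold; hence $\Gamma$ admits the relative Faber--Krahn function $\Lambda(B(z,r),\nu)=\frac{C}{r^2}(F(z,r)/\nu)^\alpha$ of (\ref{FK_withF}), with $\alpha=\min_i\alpha_i$ and $F$ as in (\ref{fcn_F}). Feeding this into Theorem \ref{FK_implies_HK} gives, for all $x,y\in\Gamma$ and $n>0$,
\[ p(n,x,y)\le\frac{c_1}{\sqrt{F(x,\sqrt n)F(y,\sqrt n)}}\exp\!\Big(-\frac{d_\Gamma^2(x,y)}{c_2 n}\Big). \]
Now take $x=v,\ y=w\in\Gamma_0$ and $n=m$ with $m\gg d_\Gamma(v,w)+\delta$, so in particular $\sqrt m>\delta$. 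Assuming $l\ge 2$ (the case $l=1$ is the classical Harnack bound), I would check that $B(v,\sqrt m)$ is contained in no single page: fixing $i$ and picking $j\ne i$, the book-like hypothesis gives a point of $\Gamma_j$ within $\delta$ of $v$, and property C ($\partial_I\Gamma_j=G_j$) lets one step a vertex into $\Gamma_j\setminus\Gamma_0\subseteq\Gamma\setminus\Gamma_i$, producing a point of $B(v,\sqrt m)$ outside $\Gamma_i$. Thus by (\ref{fcn_F}), $F(v,\sqrt m)=V_{\min}(v,\sqrt m)$ and likewise $F(w,\sqrt m)=V_{\min}(w,\sqrt m)$, which is exactly the first inequality of (\ref{UB_sqrt}).

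For the second inequality I would simplify $V_{\min}$ at spine points. In (\ref{Vmin_booklike}) each competitor $y\in[B]_\delta\cap\Gamma_i\cap[\Gamma_0]_\delta$ satisfies $d_\Gamma(y,v)\le\sqrt m+\delta$; since $\Gamma_i$ is uniform in $\Gamma$, $d_{\Gamma_i}(y,v_i)\le C_U d_\Gamma(y,v_i)\lesssim\sqrt m$ (using $m\gg\delta$), where $v_i$ is a closest point of $\Gamma_i$ to $v$, which itself lies in $[B]_\delta\cap\Gamma_i\cap[\Gamma_0]_\delta$ since $d(v,v_i)=d(v,\Gamma_i)\le\delta$. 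Doubling of $V_i$ then forces $V_i(y,\sqrt m)\approx V_i(v_i,\sqrt m)$ uniformly in such $y$, so the inner minimum is $\approx V_i(v_i,\sqrt m)$ and hence $V_{\min}(v,\sqrt m)\approx\min_{1\le i\le l}V_i(v_i,\sqrt m)$; the same holds at $w$. Substituting into the first inequality, and observing that in the only non-trivial range $d_\Gamma(v,w)\lesssim\sqrt m$ the points $v_i$ and $w_i$ are within $O(\sqrt m)$ of each other, so $V_{\min}(w,\sqrt m)\approx\min_i V_i(v_i,\sqrt m)$ as well (outside that range the Gaussian factor already makes the bound trivial), gives $\sqrt{V_{\min}(v,\sqrt m)V_{\min}(w,\sqrt m)}\approx\min_{1\le i\le l}V_i(v_i,\sqrt m)$, which yields the bound in the second line of (\ref{UB_sqrt}).

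The genuinely hard work is upstream, in Lemma \ref{FK_glue} and Theorem \ref{FK_implies_HK}, both of which may be cited; inside this proof the delicate step is the simplification of $V_{\min}$, where one must pass between the intrinsic page distance $d_{\Gamma_i}$ and the ambient distance $d_\Gamma$ before invoking doubling --- this is precisely where uniformity of the pages in $\Gamma$ is used. The other place the book-like hypothesis (rather than a mere fixed-width spine) is essential is the assertion $J_B=\{1,\dots,l\}$ and that $B(v,\sqrt m)$ meets every page, without which the collapse $F=V_{\min}$ and the clean second inequality would fail.
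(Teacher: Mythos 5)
Your proof follows essentially the same route as the paper: apply Corollary \ref{FKform_Harnackpg} and Theorem \ref{FK_implies_HK}, observe that at a spine point of a book-like graph the function $F$ in (\ref{fcn_F}) collapses to $V_{\min}$, then show $V_{\min}(v,r)\approx\min_i V_i(v_i,r)$ by noting $v_i$ itself lies in the competitor set and all competitors are within $O(r)$ of $v_i$, which doubling then handles. You are in fact slightly more careful than the paper in two places: you explicitly check that $B(v,\sqrt m)$ cannot be contained in any single page (a fact the paper takes for granted), and you explicitly invoke uniformity of $\Gamma_i$ in $\Gamma$ to pass from $d_\Gamma(y,v_i)$ to the intrinsic distance $d_{\Gamma_i}(y,v_i)$ before applying doubling of $V_i$ --- the paper states $v_i\in B(y,4\delta r)$ and applies doubling without spelling out which metric's ball is meant, so your version makes the hidden use of uniformity visible.

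The one place where you are looser than you should be is the final replacement of $\sqrt{V_{\min}(v,\sqrt m)V_{\min}(w,\sqrt m)}$ by a single $\min_i V_i(v_i,\sqrt m)$. Your dichotomy ``either $d_\Gamma(v,w)\lesssim\sqrt m$, or the Gaussian makes it trivial'' is a useful heuristic, but as phrased it does not by itself produce the uniform constants the theorem asserts: the ratio $V_{\min}(v,\sqrt m)/V_{\min}(w,\sqrt m)$ is unbounded over the range of allowed $(v,w,m)$, so one cannot simply say the Gaussian ``makes it trivial'' --- one has to quantify. The standard argument (which the paper cites without proof) is that doubling of $V_{\min}$ plus the shift bound $V_{\min}(v,r)\lesssim V_{\min}\bigl(w,r+d_\Gamma(v,w)\bigr)$ gives
\[
\frac{V_{\min}(v,\sqrt m)}{V_{\min}(w,\sqrt m)} \le C\Bigl(1+\frac{d_\Gamma(v,w)}{\sqrt m}\Bigr)^{D},
\]
and then the elementary inequality $(1+t)^{D}e^{-t^2/c_2}\le C' e^{-t^2/c_2'}$ (for any $c_2'>c_2$) absorbs the polynomial into the Gaussian, uniformly in $t=d_\Gamma(v,w)/\sqrt m$. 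Incorporating this sentence would close the gap; with it your write-up matches the paper's proof in substance.
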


\begin{proof}
The first inequality in Theorem \ref{FK_booklike} is a restatement of Theorem \ref{FK_implies_HK} with simplifications coming from the book-like assumption and from only looking at points in the spine. 

The second inequality follows once we show that $V_{\min}(v, \sqrt{m}) \approx \min_{1 \leq i \leq l} V_i(v_i, \sqrt{m}).$ In particular, this proves that $V_{\min}$ is doubling, since each $V_i$ is doubling and the minimum of a finite number of doubling functions is doubling. Standard arguments show that, if $V_{\min}$ is doubling, the quantity $\sqrt{V_{\min}(v, \sqrt{m}) V_{\min}(w,\sqrt{m})}$ can be replaced by either $V_{\min}(v, \sqrt{m})$ or $V_{\min}(w, \sqrt{m})$ at the price of changing $c_1,\ c_2$.

Let $v_i$ be a closest point in $\Gamma_i$ to $v \in \Gamma_0.$ Since $\Gamma$ is $\delta$-book-like, $v_i$ belongs to the set $[B(v,r)]_\delta \cap \Gamma_i \cap [\Gamma_0]_\delta$ for all $r \geq 0,$ so obviously $\min_{y \in [B(v,r)]_\delta \cap \Gamma_i \cap [\Gamma_0]_\delta} V_i(y, r) \leq V_i(v_i, r).$ 

On the other hand, suppose $y \in [B(v,r)]_\delta \cap \Gamma_i \cap [\Gamma_0]_\delta.$ Then if $r \geq 1,$ $v_i \in B(y, 4 \delta r)$.  Since $V_i$ is doubling, we know $V_i(v_i, r) \leq V_i(y, 4\delta r) \leq \widehat{C} V_i(y, r)$ for some constant $\widehat{C}$ independent of $y, r.$ Hence $V_i(v_i, r) \leq \widehat{C} \min_{y \in [B(v,r)]_\delta \cap \Gamma_i \cap [\Gamma_0]_\delta} V_i(y, r).$
\end{proof}

\begin{remark}
A subtle point in the proof above that relies on the book-like hypothesis is as follows: Since in the book-like case, $J_B = \{1, \dots, l\},$ it is obvious that $V_{\min}(x,\cdot)$ is always a \emph{non-decreasing} function of $r$ (as expected for volume functions). The definition of a doubling function involves the volume of a ball, which is of course a non-decreasing function in the radius of the ball. However, $V_{\min}$ is not strictly a volume function, and in the case where $J_B$ does not always include all possible indices, the function $V_{\min}$ need not be non-decreasing in $r$ in any reasonable sense, as we will see later in Example \ref{cross_ex}. The standard arguments to replace quantities such as $\sqrt{V(x,r) V(y,r)}$ by $V(x,r)$ or $V(y,r)$ at the price of changing the constant in the exponential term depend upon $V$ being doubling and a non-decreasing function in $r$.
\end{remark}  

\subsection{Heat Kernel Lower Bounds for Book-like Graphs}\label{HK_lowerbd}

In the case of a book-like graph that satisfies an additional hypothesis that seeing the spine is relatively rare, we obtain a lower bound for the $\Gamma_0$ to $\Gamma_0$ heat kernel that matches the upper bound of Theorem \ref{FK_booklike}. This lower bound relies on three ideas. First, a local parabolic Harnack inequality holds, so heat kernels evaluated at nearby points are comparable. Second, the heat kernel on all of $\Gamma$ is bigger than the Dirichlet heat kernel on each page. Third, if the spine is uniformly $S$-transient as defined below, then the Dirichlet heat kernel on each page is comparable to the Neumann heat kernel on that page. Without such a transience hypotheses, estimating the Dirichlet heat kernel is more challenging.

It is a well-known phenomenon in the continuous case that heat kernel upper estimates obtained from Faber-Krahn are not sharp in the recurrent (parabolic) case. See for instance the discussion in work of Grigor'yan, Ishiwata, and Saloff-Coste on pg. 162 of \cite{parabolic_ends} on the difference in optimal middle estimates for manifolds with ends glued via a compact set in the case where all ends are recurrent (parabolic) versus the case where all ends are transient (non-parabolic); see also \cite{lsc_ag_ends, ag_lsc_FKsurgery}. In the case there are some recurrent and some transient pages, one can get a sharp bound by considering the related $h$-transform space (see Section 6 of \cite{lsc_ag_ends}). We discuss this phenomenon in the discrete case in Example \ref{recurrent_ex} below. 

If $K$ is a subgraph of a graph $(\Gamma, \pi, \mu)$, let $\tau_K$ denote the hitting time of $K$ and $\psi_K(x)$ denote the hitting probability of $K,$ that is 
\[ \tau_K := \min\{ n \geq 0: X_n \in K\}, \quad \psi_K(x) := \bP^x(\tau_K < + \infty).\]

The notion of uniform $S$-transience, defined by the authors in \cite{ed_lsc_trans}, is precisely what was necessary there to obtain good estimates on the Dirichlet heat kernel.

\begin{definition}[Uniform $S$-transience]
Let $(\widehat{\Gamma}, \mathcal{K}, \pi)$ be a connected graph with controlled weights and $K$ be a subset of $\widehat{\Gamma}$ such that $\Gamma := \widehat{\Gamma} \setminus K$ is connected. We say the subgraph $\Gamma$ is \emph{$S$-transient} (``survival"-transient) or that the graph \emph{$\widehat{\Gamma}$ is $S$-transient with respect to the set $K$} if there exists $x \in \widehat{\Gamma}$ such that $\psi_K(x) < 1.$ (If this is not the case, $\Gamma$ is said to be \emph{$S$-recurrent}.)

Further, $\Gamma$ is \emph{uniformly $S$-transient}, ($\widehat{\Gamma}$ is \emph{uniformly $S$-transient with respect to} $K$), if there exist $L, \varepsilon >0$ such that $d(x,K) \geq L$ implies that $\psi_K(x) \leq 1- \varepsilon.$ 
\end{definition}

\begin{lemma}[Corollary 3.14 of \cite{ed_lsc_trans}]\label{D_approx_N}
Assume that $\widehat{\Gamma}$ is a Harnack graph that is uniformly  $S$-transient with respect to $K$ and that $\Gamma := \widehat{\Gamma} \setminus K$ is inner uniform. Then there exist constants $0<c, \, C < +\infty$ such that 
\begin{align}\label{Dirichlet_approx_Neumman}
cp_{\Gamma,N}(Cn, x, y) \leq p_{\Gamma,D}(n,x,y) \leq p_{\Gamma,N}(n,x,y). 
\end{align}
\end{lemma}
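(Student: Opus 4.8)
The plan is to prove the two inequalities separately: the right-hand one is elementary, while the left-hand one carries all the substance. For the upper bound, note that $\mathcal K_{\Gamma,D}$ and $\mathcal K_{\Gamma,N}$ coincide off the diagonal on $\Gamma$, whereas on the diagonal $\mathcal K_{\Gamma,D}(x,x)=\mathcal K_{\widehat\Gamma}(x,x)\le\mathcal K_{\Gamma,N}(x,x)$ because the Dirichlet kernel additionally subtracts the probability of stepping from $x$ into $K$. Hence $\mathcal K_{\Gamma,D}(x,y)\le\mathcal K_{\Gamma,N}(x,y)$ for all $x,y\in\Gamma$, and since both kernels are nonnegative, induction on $n$ via Chapman--Kolmogorov gives $\mathcal K_{\Gamma,D}^n(x,y)\le\mathcal K_{\Gamma,N}^n(x,y)$; dividing by $\pi(y)$ yields $p_{\Gamma,D}(n,x,y)\le p_{\Gamma,N}(n,x,y)$. (In words: killing the walk on $K$ can only decrease transition probabilities relative to reflecting it.)

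For the lower bound the tool is the Doob $h$-transform by the survival probability $h(x):=1-\psi_K(x)=\bP^x(\tau_K=\infty)$. Extended by $0$ on $K$, $h$ is $\mathcal K_{\widehat\Gamma}$-harmonic on $\Gamma$, so $\mathcal K^h(x,y):=\frac{h(y)}{h(x)}\mathcal K_{\Gamma,D}(x,y)$ is a Markov kernel on $\Gamma$, reversible with respect to $\pi^h(x):=h(x)^2\pi(x)$, with heat kernel $p^h(n,x,y)=p_{\Gamma,D}(n,x,y)/(h(x)h(y))$. By connectedness and controlled weights $h>0$ everywhere on $\Gamma$ (otherwise $\psi_K\equiv1$, contradicting $S$-transience), and \emph{uniform} $S$-transience gives $h(x)\ge\varepsilon$ whenever $d(x,K)\ge L$, while $h\le1$ always. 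Consequently, for $x,y$ with $d(x,K),d(y,K)\ge L$ we have $p_{\Gamma,D}(n,x,y)=h(x)h(y)\,p^h(n,x,y)\ge\varepsilon^2 p^h(n,x,y)$, and the problem reduces to comparing $p^h$ with $p_{\Gamma,N}$.

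To do this I would invoke two-sided Gaussian estimates (equivalently, the parabolic Harnack inequality) in the intrinsic metric $d_\Gamma$ for \emph{both} processes: the Neumann walk on the inner uniform subgraph $\Gamma$ of the Harnack graph $\widehat\Gamma$, and the $h$-process on $\Gamma$ --- these are exactly the estimates developed in \cite{ed_lsc_trans}, where inner uniformity, the Harnack property of $\widehat\Gamma$, and uniform $S$-transience are precisely the hypotheses used. Since $h\approx1$ away from $K$, the measures $\pi$ and $\pi^h$, and hence the volume functions $V_N(x,r)$ and $V^h(x,r)$, are comparable for points away from $K$; matching the lower Gaussian bound for $p^h$ against the upper Gaussian bound for $p_{\Gamma,N}$ then forces a fixed time change $n\mapsto Cn$ (to absorb the mismatch between the two sub-Gaussian constants) and gives $p^h(n,x,y)\ge c\,p_{\Gamma,N}(Cn,x,y)$, which proves the lower bound when $x,y$ are at distance at least $L$ from $K$. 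The remaining case, in which $x$ or $y$ lies within the fixed distance $L$ of $K$, follows by a short chaining argument: using controlled weights and the (automatic) local parabolic Harnack inequality, one moves in a bounded number of steps --- at the cost of a constant probability factor --- to points at distance at least $L$ from $K$, and then applies the estimate already obtained, using doubling of $V_N$ to compare volumes.

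The main obstacle is the input in the previous paragraph: that the $h$-transformed walk satisfies the parabolic Harnack inequality with respect to $d_\Gamma$ and $\pi^h$. This is the substantive part of \cite{ed_lsc_trans} and is exactly where uniform $S$-transience does its work; the accompanying bookkeeping --- verifying $V^h\approx V_N$ away from $K$, the near-$K$ chaining, and arranging the time change and constants so that the Gaussian lower and upper bounds actually line up --- is routine but must be done carefully.
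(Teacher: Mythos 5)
This lemma is imported verbatim from \cite{ed_lsc_trans} (where it appears as Corollary~3.14) and is not proved in the present paper, so there is no internal proof to compare against. Your sketch is architecturally consistent with how such a result is obtained: the monotonicity $p_{\Gamma,D}\le p_{\Gamma,N}$ is indeed elementary (entrywise domination of the kernels plus Chapman--Kolmogorov); the Doob $h$-transform by the survival function $h=1-\psi_K$, which turns the sub-Markovian Dirichlet kernel into a genuine reversible chain whose two-sided Gaussian bounds (in the metric $d_\Gamma$, with volume $\pi^h$) are then matched against those of the Neumann walk, is the right mechanism; and you correctly identify the parabolic Harnack inequality for the $h$-walk --- which \cite{ed_lsc_trans} derives from inner uniformity, uniform $S$-transience, and the Harnack property of $\widehat\Gamma$ --- as the substantive input that cannot be reproved here.

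One small remark that streamlines your near-$K$ treatment: under the stated hypotheses $h$ is in fact bounded below on \emph{all} of $\Gamma$, not just at distance $\ge L$ from $K$. Inner uniformity (applied to $x$ and any far-away $y\in\Gamma$) produces a $\Gamma$-path along which $d(\cdot,\partial\Gamma)=d(\cdot,K)$ grows linearly, so one reaches a point at distance $\ge L$ from $K$ in at most $\lceil L/c_u\rceil$ steps; controlled weights then give $h(x)\ge (1/C_c)^{\lceil L/c_u\rceil}\,\varepsilon$. Consequently $\pi^h\approx\pi$ and $p^h\approx p_{\Gamma,D}$ hold globally, and the chaining argument you describe for moving the endpoints of the heat kernel off the $L$-neighborhood of $K$, while correct, is not strictly necessary.
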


See \cite{ed_lsc_trans}, particularly Section 2, for some typical examples of what subgraphs are $S$-transient and the differences between this definition and classical transience.

Let $m \in \Z_+$ and $v,w \in \Gamma_0.$ Let $v_i, w_i$ denote the closest points to $v$ and $w,$ respectively, in $\Gamma_i$ for $i=1,\dots, l.$ 

\begin{theorem}[Book-like spine to spine heat kernel lower bound]\label{spine_lower_bd}
Let $(\Gamma, \mu, \pi)$ be a book-like graph with pages $\Gamma_1, \dots, \Gamma_l$ and spine $\Gamma_0$ such that each pages $\Gamma_i$ is (1) Harnack, (2) uniform in $\Gamma$, and (3) inner uniform and uniformly $S$-transient in the augmented page $\widehat{\Gamma_i}.$

Then there exist constants $c_1, c_2$ such that for all $v, w \in \Gamma_0$ and all $m \gg d_\Gamma(v,w) + \delta,$
\begin{equation}\label{spine_lowerbd}
p(m,v,w) \geq \frac{c_1}{\min_{1 \leq i \leq l} V_i(v_i, \sqrt{m})} \exp\Big(-\frac{d_\Gamma^2(v, w)}{c_2m}\Big),
\end{equation}
where $v_i$ is a closest point to $v$ in $\Gamma_i$.
\end{theorem}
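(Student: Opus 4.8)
The plan is to carry out the three-step scheme indicated just before the statement: use a local parabolic Harnack inequality on $\Gamma$ to transport $v$ and $w$ into a chosen page, dominate the heat kernel of $\Gamma$ from below by a Dirichlet heat kernel on that page, and invoke Lemma~\ref{D_approx_N} to trade that Dirichlet heat kernel for the page's own (Neumann, i.e. standalone) heat kernel, which satisfies a Gaussian lower bound because the page is Harnack. It is enough to prove \eqref{spine_lowerbd} with $\min_{1\le i\le l}$ replaced by a single fixed index $i$ and with constants not depending on $i$: the resulting $l$ inequalities immediately give the bound with the minimum. So fix $i\in\{1,\dots,l\}$ and let $v_i,w_i$ be closest points to $v,w$ in $\Gamma_i$; since $\Gamma$ is $\delta$-book-like, $d_\Gamma(v,v_i)\le\delta$ and $d_\Gamma(w,w_i)\le\delta$, and since $\Gamma_i$ is uniform in $\Gamma$ we have $d_{\Gamma_i}(v_i,w_i)\le C_U\big(d_\Gamma(v,w)+2\delta\big)$.

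Step 1 (local parabolic Harnack). The glued graph $\Gamma$ has controlled and uniformly lazy weights (Lemma~\ref{glue_graph}), hence satisfies the parabolic Harnack inequality up to any finite scale, in particular up to a scale of order $\delta$. Since $(n,x)\mapsto p(n,x,w)$ and $(n,x)\mapsto p(n,x,v_i)$ are nonnegative solutions of the discrete heat equation, applying this Harnack inequality on space-time cylinders of spatial radius of order $\delta$ and time length of order $\delta^2$ sitting just below time $m$ — first to move the space argument from $v$ to $v_i$, then, using the symmetry of $p$, from $w$ to $w_i$ — produces an integer $m''$ with $m''\approx m$ and a constant $c_0=c_0(\delta)>0$ such that
\[
p(m,v,w)\ \ge\ c_0\, p(m'',v_i,w_i).
\]
Here the hypothesis $m\gg d_\Gamma(v,w)+\delta$ guarantees $m''>0$ and that the cylinders fit; note also $m''\approx m\gg d_{\Gamma_i}(v_i,w_i)$, which is needed below.

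Step 2 (domination) and Step 3 ($D\approx N$ and Gaussian bound). Killing the $\Gamma$-walk upon exiting $\Gamma_i$ only decreases transition probabilities, so $p(m'',v_i,w_i)\ge p_{\Gamma_i,D}(m'',v_i,w_i)$, the Dirichlet heat kernel of the subgraph $\Gamma_i\subseteq\Gamma$. Because $\partial_I\Gamma_i=G_i$ and the cutting/gluing weights are compatible, uniformly lazy, and controlled, this Dirichlet heat kernel is comparable — up to fixed multiplicative constants and a bounded time change — to the Dirichlet heat kernel of $\Gamma_i$ inside its augmented page $\widehat{\Gamma}_i$: the two walks agree in the interior of $\Gamma_i$, and at each $x\in G_i$ their one-step killing conductances are both comparable to $\pi(x)$. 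By Lemma~\ref{aug_quasiiso}, $\widehat{\Gamma}_i$ is quasi-isometric to the Harnack graph $\Gamma_i$, hence is itself a Harnack graph; by hypothesis (3), $\Gamma_i$ is inner uniform in $\widehat{\Gamma}_i$ and $\widehat{\Gamma}_i$ is uniformly $S$-transient with respect to $\widehat{\Gamma}_i\setminus\Gamma_i$. Thus Lemma~\ref{D_approx_N} applies and yields $c\,p_{\Gamma_i,N}(Cm'',v_i,w_i)\le p_{\Gamma_i,D}(m'',v_i,w_i)$, where $p_{\Gamma_i,N}$ is — up to comparable weights — the heat kernel of the standalone Harnack graph $\Gamma_i$. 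The Gaussian lower bound of Theorem~\ref{VD_PI}(c) on $\Gamma_i$ then gives
\[
p_{\Gamma_i,N}(Cm'',v_i,w_i)\ \ge\ \frac{c'}{V_i\big(v_i,\sqrt{Cm''}\big)}\exp\!\Big(-\frac{d_{\Gamma_i}^2(v_i,w_i)}{c''\,Cm''}\Big).
\]
Chaining these displays, using that each $V_i$ is doubling and $m''\approx m$ (so $V_i(v_i,\sqrt{Cm''})\approx V_i(v_i,\sqrt m)$), and using $d_{\Gamma_i}(v_i,w_i)\le C_U(d_\Gamma(v,w)+2\delta)$ together with $m\gg\delta$ to absorb the $\delta$ contribution into the constants, yields \eqref{spine_lowerbd} for the fixed index $i$; taking $i$ over $\{1,\dots,l\}$ and then the minimum completes the proof.

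The main obstacle is the comparison in Step 3 between the two Dirichlet heat kernels of the page $\Gamma_i$ — the one obtained by killing inside the full glued graph $\Gamma$ and the one obtained by killing inside the augmented page $\widehat{\Gamma}_i$ — since only the latter is directly covered by Lemma~\ref{D_approx_N}. In $\Gamma$ a margin vertex may belong to several pages at once and so can carry additional ``escape'' conductance not present in $\widehat{\Gamma}_i$; the resolution is that the uniformly lazy and controlled weights force every such escape conductance to lie between a fixed fraction of $\pi(x)$ and $\pi(x)$, so the two killing mechanisms are comparable and the two kernels differ by at most constants and a bounded time change. Everything else is routine: the local parabolic Harnack inequality is used only over distances bounded by $\delta$, Lemma~\ref{D_approx_N} is quoted directly from \cite{ed_lsc_trans}, and the Gaussian lower bound on each page is immediate from the assumption that the pages are Harnack.
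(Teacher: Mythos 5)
Your proof follows the paper's three-step scheme exactly: move $v,w$ into $\Gamma_i$ by the small-scale parabolic Harnack inequality; dominate $p_\Gamma$ below by a Dirichlet heat kernel on $\Gamma_i$; trade Dirichlet for Neumann via Lemma~\ref{D_approx_N} and invoke the Gaussian lower bound of Theorem~\ref{VD_PI}. You also correctly identify the one genuinely delicate point that the paper's own proof glosses over: the Dirichlet heat kernel dominated trivially by $p_\Gamma$ is $p_{\Gamma_i,D}$ with ambient graph $\Gamma$, while the one covered by hypothesis~(3) and Lemma~\ref{D_approx_N} has ambient graph $\widehat{\Gamma}_i$, and these two sub-Markov kernels differ on the diagonal at any margin vertex that is simultaneously in $G_i$ and $G_j$ (for then it has neighbors in $\Gamma_j\setminus\widehat{\Gamma}_i$). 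So far so good, and you are being more careful than the source.

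However, the resolution you offer is not correct as stated. Knowing that the escape conductance at each such vertex is a bounded fraction of $\pi(x)$ only says the two kernels have diagonal entries whose ratio lies in $[C_e,1]$; iterating this per-step bound along a path compares the two heat kernels only up to a factor of order $C_e^{\#\{\text{lazy steps at shared margin vertices}\}}$, which can decay geometrically in $m$ --- it is not ``constants and a bounded time change.'' Turning the rare-visit intuition into a bound requires the transience estimate again, not merely the controlled/lazy weight hypotheses. A clean repair, staying inside the paper's toolbox, is to observe that $p_{\Gamma_i,D\text{ in }\Gamma}$ is \emph{identically equal} to $p_{\Gamma_i,D\text{ in }[\Gamma_i]_1}$ (every neighbor in $\Gamma$ of a $\Gamma_i$-vertex lies in the $1$-neighborhood, so the restricted kernels agree including the diagonal), and $[\Gamma_i]_1$ is quasi-isometric to $\Gamma_i$ by Lemma~\ref{aug_quasiiso}, hence Harnack; one then applies Lemma~\ref{D_approx_N} with $[\Gamma_i]_1$ as ambient, reading hypothesis~(3) as pertaining to $[\Gamma_i]_1$ (or, as the paper suggests in its ``thicker spine'' remark, arranging the gluing so that $\partial\Gamma_i\subseteq\Gamma_0$, in which case $[\Gamma_i]_1\subseteq\widehat{\Gamma}_i$ and the two Dirichlet kernels literally coincide). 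The paper itself does not address this and silently conflates the two Dirichlet kernels; since you flagged the gap but then filled it with an argument that does not go through, your write-up should be revised at this point.
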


\begin{remark}
The condition $m \gg d_\Gamma(v,w) + \delta$ means $m \geq C (d_\Gamma(v,w)+\delta)$ for some fixed constant $C;$ we add the $\delta$ to handle the event that $v=w.$ This condition is needed to ensure the random walk has enough time to see all pages; for small values of $m$ bounds on $p(m,v,w)$ are not very interesting. 
\end{remark}

\begin{proof}
Since $\Gamma$ is $\delta$-book-like, $d(v_i, v), d(w_i, w)\leq \delta$ for all $i.$ The parabolic Harnack inequality holds at scale $1$ (in fact, at any finite scale) on $\Gamma$ since $(\Gamma, \mu, \pi)$ is a connected graph with controlled and uniformly lazy weights. Therefore, since $m$ is sufficiently large, 
 \[ p(m,v,w) \approx p(m', v_i, w) \approx p(m'', v_i, w_i),\]
 where $m \approx m' \approx m''.$ 
 
 By assumption, each page $\Gamma_i$ is uniformly $S$-transient and inner uniform in $\widehat{\Gamma}_i,$ a Harnack graph (since $\widehat{\Gamma}_i$ is quasi-isometric to $\Gamma_i,$ which is Harnack). Therefore, by Lemma \ref{D_approx_N}, $p_{\Gamma_i, D} \approx p_{\Gamma_i, N}$ for all $1\leq i \leq l.$ By definition, the Dirichlet heat kernel of any subgraph is less than the heat kernel on the entire graph. Hence 
 \[ p_\Gamma(m'', v_i, w_i) \geq p_{\Gamma_i, D}(m'', v_i, w_i) \geq c p_{\Gamma_i, N}(Cm'', v_i, w_i) \quad \forall 1 \leq i \leq l.\]
 
 Moreover, $\Gamma_i$ is Harnack, so $p_{\Gamma_i, N}$ satisfies a Gaussian lower-bound. Consequently,
 \begin{equation}\label{each_piece_lower}
 p_\Gamma(m,v,w) \geq \frac{c_1}{V_i(v_i, \sqrt{m})} \exp\Big(-\frac{d_{\Gamma_i}^2(v_i,w_i)}{c_2 m}\Big) \quad \forall 1 \leq i \leq l.
 \end{equation}

 The only difference between (\ref{each_piece_lower}) and (\ref{spine_lowerbd}) is the distance appearing in the exponential. Since each page $\Gamma_i$ is uniform in $\Gamma,$ $d_{\Gamma_i}(v_i, w_i) \approx d_\Gamma(v_i, w_i).$  Applying the triangle inequality and the $\delta$-book-like hypothesis,
 \begin{equation}\label{replace_dist} d_\Gamma(v_i, w_i) \leq d_\Gamma(v_i,v) + d_\Gamma(v,w) + d_\Gamma(w_i,w) \leq 2\delta + d_\Gamma(v,w).\end{equation}

 Recall $(a+b)^2 \approx a^2 +b^2$ and $\delta$ is constant. Inequality (\ref{replace_dist}) goes the appropriate way to replace $d_{\Gamma_i}^2(v_i, w_i)$ by $d_\Gamma^2(v,w)$ in (\ref{each_piece_lower}), at the price of changing the values of $c_1, c_2$. 

Since (\ref{each_piece_lower}) holds with $d_\Gamma(v,w)$ in the exponential for all $i \in \{1, \dots, l\}$, it holds for the minimum over all such $i.$
\end{proof}

\begin{remark}
If $\Gamma$ is not book-like, then there exist points in the spine that are increasingly far away from certain pages. Therefore, there is not an upper bound on $\max_{v \in \Gamma_0, 1 \leq i \leq l} d_\Gamma(v, \Gamma_i)$, and it is not possible to move points off the spine into \emph{all} pages using a fixed small scale parabolic Harnack inequality. See Example \ref{cross_ex} below for a discussion of what can go ``wrong'' when the pages do not all have the same volume.

However, in the event that all the pages are the same (or have similar volumes), for instance $\Gamma_i = \Z^d$ for all $i,$ then the above argument still holds, as does the matching upper bound from Theorem \ref{FK_booklike}. Similarly, there is no issue if there is one ``smallest'' page that all points in the spine are always near.
\end{remark}

\section{Examples}\label{examples}

In this section, we provide some concrete examples of the spine to spine heat kernel bounds. We discuss cases where the upper bound from Faber-Krahn is optimal as in Section \ref{spine_hk}, as well as examples where this is not the case. 

\begin{example}[Heuristics of a Product Space]
One of the simplest examples of a book-like graph where the spine is an infinite set of vertices is gluing three copies of $\Z^4$ with lazy simple random walk by identifying their $x_1$-axes. As a graph, this space can be viewed as a product of three copies of $\Z^3$ identified at the origin times a copy of $\Z.$ If we forget for the moment that we are dealing with discrete space and time, one expects the heat kernel on this product space to be described by the product of the heat kernels. Continuing on this heuristic reasoning, the results of the case of gluing manifolds via a compact set of \cite{lsc_ag_ends} apply to the three copies of $\Z^3$ identified at the origin, and the heat kernel on a line is classical. If $\Gamma$ is our glued graph and $\Z^3 \# \Z^3 \# \Z^3$ denotes three $\Z^3$'s identified at the origin $o$, this heuristic gives 
\begin{align*}
p_{\Gamma}(n,o,o) &= p_{\Z}(n,o,o) p_{\Z^3 \# \Z^3 \# \Z^3}(n,o, o) \approx C\frac{1}{n^{1/2}}\frac{1}{n^{3/2}} \approx \frac{C}{n^2},
\end{align*}
which matches our results. This analysis can be made rigorous for Brownian motion on three copies of $\R^4$ glued along a cylinder of radius one centered about the $x_1$-axis. 

However, discrete space and time make the above heuristic very hard to follow. There are two main difficulties. First, the product property is simply not true with discrete time. Even a simple random walk on $\Z^3$ in discrete time is not the product of three independent simple random walks on $\Z.$ The transfer of off-diagonal estimates (which we will obtain in a forthcoming paper) from continuous time to discrete time is very delicate, and there is no general method for this transfer. Second, part of our goal is to treat many variations that are not covered by this reasoning, which is sensitive to small perturbations such as adding an extra edge or changing a handful of weights. 
\end{example}

\begin{example}[Lattices glued via a lower-dimensional lattice]\label{lattice_lattice_spineest} Recall the situation described in Example \ref{gluing_latticeex} of gluing pages $\Gamma_i = \Z^{D_i}$ with lazy SRW along a shared lower dimensional lattice $\Z^k$. The requirement that $\min_{1 \leq i \leq l} D_i - k \geq 3$ ensures the pages are uniformly $S$-transient. (See Examples 2.6 and 2.13 from \cite{ed_lsc_trans}.) Lattices are Harnack, and it is straightforward to verify the uniformity hypotheses. In $\Z^{D_i},$ we have $V_i(x,r) \approx r^{D_i}$. Set $D_{\min} := \min_{1 \leq i \leq l} D_i$. Then Theorems \ref{FK_booklike} and \ref{spine_lower_bd} give 
\begin{equation} \label{lattice_lattice_est}
p(m,v,w) \approx \frac{C}{m^{D_{\min}/2}} \exp\Big(-\frac{d_\Gamma^2(v,w)}{cm}\Big) \quad \forall v, w \in \Gamma_0, \ m \gg d_\Gamma(v,w) + \delta,
\end{equation}
where we abuse $\approx$ to mean there are different constants $C,c$ in the upper and lower bounds. 

Notice that the dimension $k$ of the gluing lattice does not directly appear in this spine to spine heat kernel estimate. 
\end{example}

\begin{example}[Finite gluing set]
Consider the case $\Gamma_0$ is finite, as in Example \ref{finite_set_ex}. Then $\Gamma$ is automatically book-like. By the small scale parabolic Harnack inequality and the results of Section \ref{spine_hk},
\begin{equation}
p(m,v,w) \approx p(m,o,o) \approx \frac{C}{\min_{1 \leq i \leq l} V_i(o_i, \sqrt{m})} \quad \forall v,w,o \in \Gamma_0, m \gg \text{diam}(\Gamma_0).
\end{equation}
\end{example} 

\begin{example}[All pages have the same volume]
If all pages have the same volume, for instance as was described in Example \ref{samevol_notbook}, where three copies of $\Z^4$ with lazy SRW were joined along a cross (a copy of $\Z^4$ is joined to both the $x_1$ and $x_2$ axes of a third, central copy of $\Z^4$) then the proofs from Section \ref{spine_hk} go through without difficulty, regardless of whether the graph is book-like or not. In this case $V_{\min}=V_i$ for all $i$, so suddenly seeing a new page does not make a difference to the volume computations. 
\end{example}

\begin{example}[Smallest page always visible]\label{smallest_visible}
Suppose the pages are easily ordered in terms of their volumes, as is the case with the lattice examples considered above. (See \cite{G_I_LSC_survey} for some examples of manifolds with oscillating volume functions where ends are not easily ordered.) Let $\Gamma_1$ be the page with smallest volume. Then provided the fixed width spine $\Gamma_0$ satisfies the property that there exists $\alpha>0$ such that $d(v, \Gamma_1) \leq \alpha$ for all $v \in \Gamma_0,$ the bounds from Section \ref{spine_hk} go through. Since the smallest page is always visible, the function $V_{\min}$ is always based off volumes in the smallest page and has no sudden changes. 
\end{example}

\begin{example}[$\Z^4$ and $\Z^6$ glued to $\Z^5$ via a cross]\label{cross_ex}
This is the simplest example of a non-book-like graph where the Faber-Krahn upper bound is non-optimal. Consider the situation of Example \ref{samevol_notbook}, but where the pages have different dimensions/volumes: Take three lattices, a $\Z^4,$ a $\Z^5,$ and a $\Z^6$, all with lazy SRW. Glue the $\Z^4$ and $\Z^5$ together by identifying their $x_1$-axes. Glue the $\Z^5$ and the $\Z^6$ together by identifying their $x_2$-axes. The resulting space has a copy of $\Z^5$ as a central part, where a random walk can travel to the copy of $\Z^4$ via the $x_1$-axis or to the copy of $\Z^6$ via the $x_2$-axis. The gluing spine $\Gamma_0$ is a ``cross'', the union of the $x_1$-axis and the $x_2$-axis. 

Unlike Example \ref{smallest_visible} above, where all pages had the same volume, the proofs of Section \ref{spine_hk} no longer hold. For instance, suppose $v=o$ is the origin and $w$ is a point located somewhere along the $x_2$-axis. By Theorem \ref{FK_implies_HK}, we have the heat kernel upper bound 
\[ p(n, o, w) \leq \frac{c_1}{\sqrt{V_{\min}(o, \sqrt{n})V_{\min}(w, \sqrt{n})}} \exp \Big(-\frac{d^2(o,w)}{c_2 n }\Big),\]
where $V_{\min}$ is defined as in (\ref{V_min}). Since $o$ is the origin, $B(o,r)$ always intersects all three pages for all $r>0$ and therefore $V_{\min}(o, \sqrt{n}) \approx (\sqrt{n})^4 = n^2.$ However, $B(w, r)$ only intersects the pages $\Z^5$ and $\Z^6$ for $r< d(o, w)$. This gives 
\[ V_{\min}(w, \sqrt{n}) \approx \begin{cases} n^{5/2},& \text{if } n < d^2(o,w) \\ n^2,& \text{ if } n \geq d^2(o,w).\end{cases}\]

Note the sharp transition in $V_{\min}$ and how it is not a non-decreasing function of the radius. 

Therefore the upper bound from Theorem \ref{FK_implies_HK} gives
\[ p(n, o, w) \leq \begin{cases} \frac{c_1}{n^{9/4}} \exp\Big(-\frac{d^2(o,w)}{c_2n}\Big), & n < d^2(o,w) \\ \frac{c_1}{n^2} \exp\Big(-\frac{d^2(o,w)}{c_2n}\Big), & n \geq d^2(o,w),\end{cases}\]
where $o$ is the origin and $w$ is along the $x_2$-axis.

The upper bound is not continuous since when $n=d^2(o,w),$ the two exponents on $n$ do not match and the exponential term is constant. We do not expect this bound matches the optimal bound in either regime. 

For instance, if $o^*, \ w^*$ are closest points in $\Z^5 \setminus \Gamma_0$ to $o, \ w$, respectively,  the following lower bound holds:
\begin{align*}
p_\Gamma(n,o,w) &\geq c_1 p_\Gamma(c_2 n, o^*, w^*) \quad \text{(small scale PHI)}\\
&\geq c_1 p_{\Z^5 \cup \Gamma_0, D} (c_2 n, o^*, w^*) \quad \text{(definition of Dirichlet heat kernel)} \\
&\gtrsim p_{\Z^5}(n, o^*, w^*) \gtrsim \frac{C}{n^{5/2}} \exp\Big(-\frac{d_{\Z^5}^2(o^*,w^*)}{cn}\Big).
\end{align*}
The last line follows from \cite[Corollary 3.14]{ed_lsc_trans} since $\Z^5 \cup \Gamma_0$ is Harnack (and quasi-isometric to $\Z^5$) and $\Z^5$ is inner uniform and uniformly $S$-transient with respect to the cross $\Gamma_0$. Since $o^*, \ w^*$ are close to $o, \ w,$ the constant $c_2$ is such that $c_2 n \approx n$ and $d_{\Z^5}(o^*, w^*) \approx d_\Gamma (o, w)$ (provided $o,\ w$ are not such that $o^* = w^*$). 

We expect the above lower bound to be optimal when $n \ll d^2(o,w).$ However, $5/2=10/4 > 9/4$, which indicates the upper bound given by Faber-Krahn is too large. (Intuitively, for such small times, the above lower bound should be optimal: Since $w$ is along the $\Z^5$ and $\Z^6$ join and there is not enough time to see the $\Z^4$ page, we expect the heat kernel to look like that of $\Z^5$.)

On the other hand, if $n \gg d^2(o,w),$ we expect a random walk to spend a lot of time in the $\Z^4$ page. However, it is still necessary to move from $w$ to the $\Z^4$ page and to come out of the $\Z^4$ page to find $o$. In this case, we expect the heat kernel to ignore the $\Z^6$ page and behave like the heat kernel on a $\Z^4$ glued to a $\Z^5$ by identifying their $x_1$-axes. In this case, as we will show in forthcoming work, the expected heat kernel bound is 
\[ p(n,o,w) \approx C\Big[\frac{1}{n^2 [d(w, \Z^4)]^2} + \frac{1}{n^{5/2}}\Big] \exp\Big(-\frac{d^2(o,w)}{cn}\Big).\]
Proving the informal arguments given above would require a significant amount of work, but we hope it is nonetheless convincing to the reader that the Faber-Krahn upper bound is non-optimal in this situation, nor can we expect it to be, since the Faber-Krahn upper bound of Theorem \ref{FK_implies_HK} must hold for all pairs of points $x, y.$
\end{example}

\begin{example}[$\Z^5$ and $\Z^6$ glued along a two-dimensional parabola] Consider a copy of $\Z^5$ and a copy of $\Z^6$, both with lazy SRW. In each of these two lattices, consider the set of lattice points $P$ in the $x_1x_2$-plane that lie inside the ``parabola'' $x_2 = \pm x_1^\alpha$ for $\alpha \in (0,1)$ and $x_1 \in \Z_{\geq 0}$. 

There are two natural ways to glue these two lattices together via the parabola. One way is to identify the \emph{interiors} of the parabolas across the two lattices, so that the spine $\Gamma_0$ is the set of lattice points that lie inside or on the boundary of the parabola. We may also duplicate the parabola and connect the $\Z^5$ and the $\Z^6$ to this duplicate parabola so that the pages do not directly touch.

The other natural approach is to glue the lattices together \emph{only along the parabola itself}. In this second case, since the ``parabola'' in $\R^n$ given by $P^{n, \alpha}  = \{ x \in \R^{n} : x_2 = \pm x_1^\alpha\}$ (with $n=5, 6$) does not necessarily hit integer lattice points, consider the sets $\widetilde{P}^{n,\alpha} := \{ x \in \Z^n : d_{\R^n}(x, P^{n,\alpha}) \leq \sqrt{n}\}$. Since $d_{\R^n}(x, \Z^n) \leq \sqrt{n}$ for all $x\in \R^n,$ every point in the parabola $P^{n, \alpha}$ in $\R^n$ lies within distance at most $\sqrt{n}$ of an integer lattice point, and therefore corresponds to a point in $\widetilde{P}^{n, \alpha}$. Moreover, points more that distance $\sqrt{n}$ from each other in $\R^n$ that lie on the parabola are associated with different integer lattice points. In this way, we have a set of integer lattice points that approximates the parabola, and we may glue $\Z^5$ and $\Z^6$ by identifying the vertices in $\widetilde{P}^{5,\alpha}$ and $\widetilde{P}^{6,\alpha}.$  

If the interiors of the parabolas are glued together through an extra copy of the parabola, then the pages look like $\Z^5$ and $\Z^6$, which are Harnack. Uniformity and inner uniformity hold since the parabola is a two-dimensional set living inside a five- or six-dimensional space. Moreover, since the parabola is smaller than a plane, the pages are uniformly $S$-transient in their augmented versions. As this graph only has two pages, it is automatically book-like. Therefore the hypotheses of Section \ref{spine_hk} hold and, for any $v, w \in P$ and $m$ sufficiently large,
\begin{equation}\label{para_gluing} p(m,v,w) \approx \frac{C}{m^{5/2}} \exp\Big(-\frac{d^2(v,w)}{cm}\Big).\end{equation}
This is the same estimate as (\ref{lattice_lattice_est}) from gluing lattices via lower-dimensional lattices in Example \ref{lattice_lattice_spineest}. The estimate (\ref{para_gluing}) also holds if we glued the $\Z^5$ and the $\Z^6$ together via only the parabola itself (and not the interiors). This tells us that in some sense the ``dimension'' or shape of the spine does not directly impact the spine to spine heat kernel estimate, so long as the spine is such that the uniform $S$-transience and uniformity hypotheses hold. While these estimates are the same, \emph{where} they hold is dependent upon the shape of the spine, and the shape of the spine will be very important in future work computing full two-sided heat kernel estimates. 
\end{example}

\begin{example}[$\Z^4$ and $\Z^5$ glued via a parabola]
In the style of the previous example, now consider a copy of $\Z^4$ and of $\Z^5,$ both with lazy SRW, glued by identifying the interiors of parabola $P^{n,\alpha}$ ($\alpha \in(0,1), \ n=5,6$). While the pages are still Harnack and uniform, the spine $\Gamma_0$ is uniformly $S$-transient with respect to the $\Z^5$ page and $S$-transient but \emph{not uniformly so} with respect to the $\Z^4$ page \cite[Section 3.5]{ed_lsc_trans}. 

For $v,w \in \Gamma_0$ and $m$ sufficiently large, Faber-Krahn gives the upper bound
\[ p(m,v,w) \leq \frac{c_1}{m^2} \exp\Big(-\frac{d_\Gamma^2(v,w)}{c_2 m}\Big).\]
In the argument about the lower bound, it follows that $p(m,v,w) \geq p_{\Z^5, D}(m,v_2,w_2) \approx p_{\Z^5}(m,v_2,w_2)$ as before, where $v_2, w_2$ are closet points in $\Z^5$ to $v, w$. However, this does not give us a matching lower bound (as it gives $m^{3/2}$). For the $\Z^4$ page, 
\begin{align*}
p(m,v,w) &\geq p_{\Z^4, D}(m,v_1,w_1) = h(v_1)h(w_1) p_{\Z^4_h}(m,v_1,w_1) \\&\approx h(v_1) h(w_1) \frac{C}{h(v_{1, \sqrt{m}}) h(w_{1, \sqrt{m}}) m^{2}}\exp\Big(-\frac{d^2(v_1,w_1)}{cm}\Big).
\end{align*}
Here $h$ is the $h$-transform for the $\Z^4$ page with the parabola as the boundary, $v_1, w_1$ are closest points in $\Z^4$ to $v,w$, and $v_{1, \sqrt{m}}$ is a point that is approximately distance $\sqrt{m}$ away from both $v_1$ and the parabola. (See \cite[Theorem 3.7]{ed_lsc_trans} and surrounding commentary and references.) 

For $v, w$ fixed and large $m$, this gives us a lower bound of the same order: $m^{-2}$. However, while we describe the behavior of $h$ in some regimes in Section 3.5 of \cite{ed_lsc_trans}, it behaves differently in different regimes. In particular, $h$ does not approach $1$ in any uniform manner as $v,w$ move away from the parabola, and we do not know how $h$ behaves for points near the boundary of the parabola. 
\end{example}

\begin{example}[$\Z^3$ with a half-line tail]\label{recurrent_ex}
Consider a copy of $\Z^3$ and a copy of the half-line $\Z_{\geq 0}$, both with lazy SRW. Identifying the origins $o$ of these graphs creates book-like graph with Harnack and uniform pages. However, the $S$-transient hypothesis does not hold. The upper bound given by Faber-Krahn is 
\[ p(n, o, o) \leq \frac{c_1}{n^{1/2}}.\]
As mentioned previously, Faber-Krahn upper estimates are not sharp in the recurrent (parabolic) case. However, in the case of this example, we can obtain a sharp estimate by making use of Doob's $h$-transform. The continuous version of this example (a copy of $\R^3$ and $\R$ appropriately glued together as manifolds) is treated in Example 6.11 of \cite{lsc_ag_ends}. There a complete description of heat kernel estimates is given; we defer the complete analogous description of the discrete case to a future paper and focus only on the spine to spine estimate.  

First, we construct a positive harmonic function on this glued graph as follows: Consider the Green function $G_{\Z^3}(x,o) = \sum_n p_{\Z^3}(n, x,o)$, where $o$ is the origin. This function is harmonic on $\Z^3 \setminus \{o\}$ (superharmonic on $\Z^3$) and has a finite value at the origin. Indeed, if we set $d(x,o) = |x|,$ then
\[ G_{\Z^3}(x,o) \approx \begin{cases}\frac{1}{|x|}, & x \not = o \\ \sum_{n} \frac{1}{n^{3/2}}, & x=o. \end{cases}\]
Therefore $1+G(x,o)$ is also harmonic on $\Z^3 \setminus \{o\},$ and it approaches $1$ as $|x| \to \infty.$ On the half line, all harmonic functions are linear and have form $ax+b$. Choose $a>0$ such that 
\[ h(x) = \begin{cases} 1+ G_{\Z^3}(x,o), & x \in \Z^3 \setminus \{o\} \\ 1+ G_{\Z^3}(o,o) , & x =o \\ ax + 1+ G_{\Z^3}(o,o), & x \in \Z_{>0} \end{cases}\]
is a harmonic function on $\Z^3$ with a tail. Such an $a>0$ exists since $G_{\Z^3}$ is superharmonic, so the average of points $G_{\Z^3}(x,o)$ at the neighbors of the origin $o$ in $\Z^3$ is less than the value $G_{\Z^3}(o,o).$ 

If $(\Gamma, \mathcal{K}, \pi)$ denotes this glued version of $\Z^3$ and $\Z_{\geq 0}$ with the lazy simple random walk, we consider the $h$-transform graph $(\Gamma_h, \pi_h, \mu_h)$ by setting $\pi_h(x) := h^2(x)\pi(x)$ for all $x \in \Gamma=\Gamma_h$ (as graphs) and 
\[ \mathcal{K}_h(x,y) = \frac{1}{h(x)} \mathcal{K}(x,y) h(y).\]
Then 
\[ p_\Gamma(n,x,y) = h(x)h(y) p_{\Gamma_h}(n,x,y).\]
It follows from the theory of $h$-transforms that if $\Gamma$ has Harnack pages, then the same is true of $\Gamma_h,$ and changing the weights does not change (inner) uniformity. In this example, it is clear from Theorem 2.9 of \cite{ed_lsc_trans} that the pages of $\Gamma_h$ are uniformly $S$-transient, since volume in both pages now behaves like volume in $\Z^3.$ Treating $h(o)$ as a constant, applying the results of Section \ref{spine_hk} to $p_{\Gamma_h}$ gives the sharp bound 
\[ p_\Gamma(n,o,o) = h(o)h(o) p_{\Gamma_h}(n,o,o) \approx \frac{1}{n^{3/2}}.\]
This sharp bound is much smaller than the original upper bound of $n^{-1/2}$.
\end{example}

\begin{example}[$\Z^3$ glued to a $\Z^2$ or any recurrent graph] 
The previous example works similarly if we take a copy of $\Z^3$ and a copy of $\Z^2$, both with lazy SRW, and identify their origins $o$. The Faber-Krahn upper bound gives $p(n,o,o) \leq c_1 n^{-1}.$ 

However, we could again construct a positive harmonic function on the glued graph. Since $\Z^2$ is recurrent, it does not have a Green function, but it does have a function that is harmonic on $\Z^2\setminus \{o\}$. One way to define this function is as $g(x) = \sum_{n=0}^\infty [p_{\Z^2}(n,o,o)-p_{\Z^2}(n,o,x)]$ for $x \in \Z^2.$ It is known that $g(x) \approx \log(|x|),$ where $|x|$ denote the distance from $x$ to the origin $o$. (See e.g. Spitzer \cite{Spitzer} or Lawler and Limic \cite[Chapter 6]{lawler_limic}). Then it is possible to choose $a>0$ such that 
\[ h(x) = \begin{cases} 1+ G_{\Z^3}(x,o), & x \in \Z^3 \setminus \{o\} \\ 1+ G_{\Z^3}(o,o), & x =o \\ ag(x)+1+G_{\Z^3}(o,o), & x \in \Z^2 \setminus \{o\},\end{cases}\]
is a harmonic function on the glued graph. 

Then, as in the previous example, we may do the $h$-transform, and it  again follows from Theorem 2.9 of \cite{ed_lsc_trans} and the new volume of $\Z^2$ after $h$-transform that the origin is uniformly $S$-transient in the $\Z^2$ page. This gives the sharp bound $p(n,o,o) \approx (n \log^2(1+n))^{-1}.$ 

More generally, suppose $\Z^3$ is glued to a recurrent graph $R$ via a single vertex $o$. Assume further that $R$ is a Harnack graph and satisfies the property that there exists $A>1$ such that for any $r>A^2$ and all $x,y \in R$ satisfying $d(o,x)=d(o,y)=r$, there exists a continuous path from $x$ to $y$ whose image is contained in $B(o,Ar) \setminus B(o, A^{-1} r).$ This property is called the \emph{relatively connected annuli} (RCA) property. By work of Griogr'yan and Saloff-Coste \cite[Section 4]{ag_lsc_extcptset}, such a graph $R$ has a function $h_R(x)$ that is harmonic on $R \setminus {o}$ and satisfies 
\[h_R(x) \approx \int_1^{|x|} \frac{s}{V_R(o, s)} \, ds.\]
(While the results of \cite{ag_lsc_extcptset} are about manifolds, the same arguments should hold in the present graph setting). 

Then we may again repeat the construction of a harmonic function on the glued graph and perform the $h$-transform. However, for choices of $R$ that are not $\Z_{>0}$ or $\Z^2,$ we cannot so directly compute $h$, and our heat kernel estimates depend upon this unknown function $h$ as well as upon the volume function of $R$.
\end{example}

\begin{remark}
In all of our examples above, we were always discussing lattices with lazy simple random walk. However, it should be noted that none of our proof techniques rely on having such precise symmetry. All of the conclusions of the examples remain true when gluing graphs quasi-isometric to the $d$-dimensional lattices and with weights comparable to those of the lazy simple random walk.
\end{remark}

\appendix

\section{Proof of Lemma \ref{FK_quasiiso_lem}}\label{FK_quasi_iso}

This appendix reminds the reader of some useful properties of quasi-isometries and provides the proof of Lemma \ref{FK_quasiiso_lem}.

\begin{remark}\label{quasi-iso_compweights}~
\begin{itemize}
\item Suppose $(\Gamma_1, \pi_1, \mu^1)$ and $(\Gamma_2, \pi_2, \mu^2)$ are connected graphs with controlled and uniformly lazy weights, as has been assumed throughout this paper. Suppose further that $\Gamma_1, \Gamma_2$ are quasi-isometric as in Definition \ref{quasi-iso} with a quasi-isometry $\Phi: \Gamma_1 \to \Gamma_2.$ Hypothesis (3), which says the quasi-isometry takes vertices to vertices with comparable weights, can also be thought of as a condition on the volume of small balls. In particular, hypothesis (3) combined with the consequences of controlled and uniformly lazy weights implies
\[ V_1(x,1) \approx V_2(\Phi(x), 1) \text{ and } \mu^1_{xy} \approx \mu^2_{\Phi(x) z} \quad \forall y \sim x \in \Gamma_1, \ z \sim \Phi(x) \in \Gamma_2.\]

\item Suppose $\Gamma_1, \ \Gamma_2$ are quasi-isometric and there is a quasi-isometry $\Phi: \Gamma_1 \to \Gamma_2.$ Then we may always define a quasi-isometry $\Phi^{-1}: \Gamma_2 \to \Gamma_1$ as follows: for every $z \in \Gamma_2,$ set $\Phi^{-1}(z)$  to be an element $\beta \in \Gamma_1$ such that $d_2(\Phi(\beta), z) \leq \varepsilon.$ (Such a $\beta$ always exists as $\Phi$ is a quasi-isometry.) While $\Phi^{-1}$ is not literally the inverse of $\Phi,$ for any such a choice of $\Phi^{-1}$,
\[ d_2(z, \Phi(\Phi^{-1}(z))) = d_2(z, \Phi(\beta)) \leq \varepsilon.\]
Note the quasi-isometry constants of $\Phi^{-1}$ need not be the same as those for $\Phi$ (though they are related).
\end{itemize}
\end{remark}

Many of the arguments and ideas in the following proof of Lemma \ref{FK_quasiiso_lem} resemble those of \cite{TC_LSC_IsoInfini}, which addresses stability of several functional inequalities under quasi-isometry in the context of both Riemannian manifolds and graphs. However, Faber-Krahn inequalities are not discussed there. 

\begin{proof}[Proof of Lemma \ref{FK_quasiiso_lem}]
Let the quasi-isometry constants of $\Phi$ be $a,b,\varepsilon, C_q, C_w$ as in Definition \ref{quasi-iso}. In this proof, the Latin alphabet denotes elements of $\widehat{\Gamma},$ and the Greek alphabet denotes elements of $\Gamma.$ 

Let $r >0,\ z \in \widehat{\Gamma}.$ Consider $\widehat{\Omega} \subset \widehat{B}(z, r)$ and fix a function $f$ supported on $\widehat{\Omega}.$ Since the eigenfunction of $\lambda_1(\Omega)$ has a sign, we may assume $f \geq 0.$ 

We will also assume $r \geq 1:$ If $r<1$ then $\widehat{B}(z, r) = \{z\}$ and the only choice is $\widehat{\Omega} = \widehat{B}$ so $f(z) =C$ and $f(x) = 0$ for $x \in \widehat{\Gamma}$ where $x \not = z.$ Then, if $\widehat{C}_c$ is the constant for controlled weights in $\widehat{\Gamma},$
\begin{align*}
&\lVert \, \lvert \nabla f \rvert \, \rVert_2^2 = C^2 \sum_{y \sim z} \widehat{\mu}_{zy} \geq C^2 \frac{\widehat{\pi}(z)}{\widehat{C}_c} = \frac{1}{\widehat{C}_c} \lVert f \rVert^2.
\end{align*}
Thus $\lambda_1(\widehat{\Omega}) \approx 1.$ This estimate is independent of $z,$ and similarly, $\lambda_1(\{\alpha\}) \approx 1$ for any $\alpha \in \Gamma.$ Thus these Faber-Krahn functions evaluated at such balls are always similar and we can focus on the more interesting case where $r \geq 1.$

Let $f_\varepsilon$ denote the weighted average of $f$ on balls of radius $\varepsilon,$ that is, 
\[ f_\varepsilon(x) = \frac{1}{\widehat{V}(x,\varepsilon)} \sum_{y \in \widehat{B}(x,\varepsilon)} f(y) \, \widehat{\pi}(y).\]

\noindent\underline{Support of $f_{\varepsilon} \circ \Phi$:}

Consider $f_\varepsilon \circ \Phi,$ which is a function on $\Gamma.$

By definition, $\supp f \subseteq \widehat{\Omega} \subseteq \widehat{B}(z,r).$ Then $\supp f_\varepsilon \subseteq [\widehat{\Omega}]_{\varepsilon} \subseteq \widehat{B}(z, r + \varepsilon) \subseteq \widehat{B}(z, (1+\varepsilon)r),$ where $[ \widehat{\Omega} ]_{\varepsilon}$ denotes the $\varepsilon$-neighborhood of $\widehat{\Omega}.$ 

We claim that if $\alpha \in \supp (f_\varepsilon \circ \Phi),$ then $\alpha \in B(\Phi^{-1}(z), c_2 r),$ where $c_2 : = a(1 + 2 \varepsilon+ b).$ 

Suppose $\alpha \in \supp (f_\varepsilon \circ \Phi).$ Then $\Phi(\alpha) \in \widehat{B}(z, (1+\varepsilon)r)$ and hence: 
\begin{align*}
d(\alpha, \Phi^{-1}(z)) &\leq a \widehat{d}(\Phi \circ \Phi^{-1} (z), \Phi(\alpha)) + ab
\leq a \Big[ \widehat{d}(\Phi \circ \Phi^{-1}(z), z) + \widehat{d}(z, \Phi(\alpha))\Big] +ab \\
&\leq a \big[\varepsilon + (1+\varepsilon)r \big] +ab
\leq a(1 + 2 \varepsilon+ b) r. 
\end{align*}

Moreover, let $\Omega := \text{PreIm}_{\Phi}([\widehat{\Omega}]_{\varepsilon}) \subset B(\Phi^{-1}(z), c_2r).$ Note $\alpha \in \Omega \iff \Phi(\alpha) \in [\widehat{\Omega}]_{\varepsilon}.$ Hence $\supp (f_\varepsilon \circ \Phi) \subset \Omega \subset B(\Phi^{-1}(z), c_2r).$ 

\noindent\underline{Using a Faber-Krahn Function on $\Gamma$:}

Since $\Lambda$ is a relative Faber-Krahn function on $\Gamma,$ we have 
\begin{align}
\lVert \, | \nabla  (f_\varepsilon \circ \Phi) |\, \rVert_2^2 \geq \Lambda(B(\Phi^{-1}(z), c_2r), \pi(\Omega))\, \lVert f_\varepsilon \circ \Phi||_2^2.
\end{align}

The result will follow if we can prove the following three claims:
\begin{enumerate}
\item $\pi(\Omega) \leq c_3\, \widehat{\pi}(\widehat{\Omega})$ for some constant $c_3$ (since $\Lambda$ is decreasing in $\nu$ by definition)
\item $\lVert \, | \nabla ( f_\varepsilon \circ \Phi) | \, \rVert_2^2 \leq C_A \lVert \, | \nabla  f|\, \rVert_2^2$ for some constant $C_A$
\item $\lVert f_\varepsilon \circ \Phi||_2^2 \geq C_B \lVert f \rVert_2^2$ for some constant $C_B$. 
\end{enumerate}

(Note above we must be careful to choose the appropriate $L^2$ norms with respect to either $\Gamma$ or $\widehat{\Gamma}$.)

\underline{Claim 1:} $\pi(\Omega) \leq c_3\, \widehat{\pi}(\Omega)$ for some constant $c_3.$

As $\widehat{\Gamma}$ is locally uniformly finite, each point in $\widehat{\Gamma}$ has at most $\widehat{N}$ neighbors. Further, suppose $w \in [\widehat{\Omega}]_{\varepsilon} \setminus \widehat{\Omega}.$ Then there exists some point $x \in \widehat{\Omega}$ such that $\widehat{d}(x,w) < \varepsilon$ and hence $\widehat{\pi}(x) \approx \widehat{\pi}(w)$ (with constants independent of $x,\ w$). The contribution of $w$ to $\widehat{\pi}([\widehat{\Omega}]_{\varepsilon})$ is $\widehat{\pi}(w).$ For each point $x \in \widehat{\Omega},$ there are at most finitely many $w$'s in $[\widehat{\Omega}]_{\varepsilon} \setminus \widehat{\Omega}$ satisfying $\widehat{d}(x,w) < \varepsilon.$ Hence there is some constant $C$ (depending on $\varepsilon$ and the constants of the hypotheses on the graphs) such that 
\begin{align*}
\widehat{\pi}(\widehat{\Omega}) \leq \widehat{\pi}([\widehat{\Omega}]_{\varepsilon}) \leq C \, \widehat{\pi}(\widehat{\Omega}).
\end{align*}

By definition, $\Omega = \text{PreIm}_{\Phi}([\widehat{\Omega}]_{\varepsilon}).$ Each point in $[\widehat{\Omega}]_{\varepsilon}$ can only be mapped to by a finite number of points in $\Omega$ since $\Gamma_1$ is locally uniformly finite and
\[ a d(\alpha, \beta) - b \leq \widehat{d}(\Phi(\alpha), \Phi(\beta)) = 0 \implies d(\alpha, \beta) \leq \frac{b}{a}.\]

Then, where $c$ is a constant (depending on the graph structure and the quasi-isometry) whose value changes in each step, the above and earlier remarks imply
\begin{align*}
\pi(\Omega) &= \sum_{\alpha \in \Omega} \pi(\alpha) \leq c \sum_{\alpha \in \Omega} \widehat{\pi}(\Phi(\alpha)) 
\leq c \sum_{v \in [\widehat{\Omega}]_{\varepsilon}} \widehat{\pi}(v)
\leq c \widehat{\pi}(\widehat{\Omega}).
\end{align*}

We may take $c_3$ to be any value of $c$ that achieves this upper bound.

\underline{Claim 2:} $\lVert \, | \nabla  (f_\varepsilon \circ \Phi) | \, \rVert_2^2 \leq C_A \lVert \,  | \nabla  f| \, \rVert_2^2$ for some constant $C_A$

Let $\alpha, \beta \in \Gamma$ satisfy $\alpha \sim \beta.$ Then $\widehat{d}(\Phi(\alpha), \Phi(\beta)) \leq a d(\alpha, \beta) = a.$ Let $x = \Phi(\alpha), y = \Phi(\beta);$ by the previous sentences, $y \in \widehat{B}(x, a).$ Consequently, for $w \in \widehat{B}(y, \varepsilon),$ we have $w \in \widehat{B}(x, \varepsilon + a).$ Hence
\begin{align*}
|f_{\varepsilon}(x) - f_{\varepsilon}(y)| &= |f_{\varepsilon}(x) - f(x) + f(x) - f_{\varepsilon}(y)| \\
&\leq \frac{1}{|\widehat{V}(x, \varepsilon)|} \sum_{v \in \widehat{B}(x,\varepsilon)} |f(v) - f(x)| \, \widehat{\pi}(v)  + \frac{1}{|\widehat{V}(y, \varepsilon)|} \sum_{w \in \widehat{B}(y, \varepsilon)} |f(x) - f(w)| \, \widehat{\pi}(w)  \\
&\leq \frac{C}{|\widehat{V}(x, \varepsilon+a)|} \sum_{v \in \widehat{B}(x, \varepsilon + a)} |f(v) - f(x)| \, \widehat{\pi}(v) .
\end{align*} 

Using Jensen's inequality, 
\begin{align*}
|f_{\varepsilon}(x) - f_{\varepsilon}(y)|^2 &\leq \Big|\frac{C}{|\widehat{V}(x, \varepsilon+a)|} \sum_{v \in \widehat{B}(x, \varepsilon + a)} |f(v) - f(x)| \; \widehat{\pi}(v)\Big|^2 \\
&\leq \frac{C}{|\widehat{V}(x, \varepsilon+a)|} \sum_{v \in \widehat{B}(x, \varepsilon + a)} |f(v) - f(x)|^2 \; \widehat{\pi}(v)
\end{align*}

Now, for each $v \in \widehat{B}(x, \varepsilon + a),$ take a path of vertices $x_0 = x, x_1, \dots, x_{\widehat{d}(x,v)} = v$ between $x$ and $v$ such that $\widehat{d}(x_i, x_{i+1}) = 1$ for all $i.$ Then since such a path has a bounded length,
\[ |f(v) - f(x)|^2 \leq \Big| \sum_{i=0}^{\widehat{d}(x,v) - 1} |f(x_i) - f(x_{i+1})| \Big|^2 \leq C\sum_{i=0}^{\widehat{d}(x,v) - 1} |f(x_i) - f(x_{i+1})|^2 .\]

Moreover $\widehat{\pi}(v) \approx \widehat{\pi}(x_i)$ for any $i=0, \dots, \widehat{d}(x,v) - 1$ since $\widehat{d}(x_i, v) \leq \varepsilon +a,$ a fixed value. Putting the above estimates together,
\[|f_{\varepsilon}(x) - f_{\varepsilon}(y)|^2  
\leq  \frac{C}{|\widehat{V}(x, \varepsilon+a)|} \sum_{v \in \widehat{B}(x, \varepsilon + a)} \sum_{i=0}^{d(x,v) - 1} |f(x_i) - f(x_{i+1})|^2 \; \widehat{\pi}(x_i). \]

Recall $x= \Phi(\alpha)$ (and $y=\Phi(\beta)).$ Using Remark \ref{quasi-iso_compweights} and the controlled weights of the graphs,
\[ \mu_{\alpha \beta} \approx \widehat{\mu}_{\Phi(\alpha) x_1} \approx \widehat{\pi}(\Phi(\alpha)) \approx \widehat{\pi}(x_i) \approx \widehat{\mu}_{x_i x_{i+1}}.\]

Now, as at most finitely points of $\Gamma$ can map to the same point in $\widehat{\Gamma},$
\begin{align*}
\lVert \, | \nabla  (f_\varepsilon \circ \Phi) |\,  \rVert_2^2 &= \frac{1}{2} \sum_{\alpha, \beta \in \Gamma} |f_{\varepsilon} \circ \Phi (\alpha) - f_{\varepsilon} \circ \Phi (\beta)|^2 \mu_{\alpha \beta} 
= \frac{1}{2} \sum_{\alpha, \beta \in \Gamma} |f_{\varepsilon}(x) - f_{\varepsilon} (y)|^2 \mu_{\alpha \beta} \\
&\leq C \sum_{\alpha \in \Gamma} \frac{1}{|\widehat{V}(x, \varepsilon+a)|} \sum_{v \in \widehat{B}(x, \varepsilon + a)} \sum_{i=0}^{d(x,v) - 1} |f(x_i) - f(x_{i+1})|^2 \, \widehat{\pi}(x_i) \, \widehat{\mu}(x_i, x_{i+1}) \\
&\leq C \sum_{w \in \widehat{\Gamma}} \frac{1}{|\widehat{V}(w, \varepsilon+a)|} \sum_{v \in \widehat{B}(w, \varepsilon + a)} \sum_{i=0}^{d(w,v) - 1} |f(x_i) - f(x_{i+1})|^2 \, \widehat{\pi}(x_i) \, \widehat{\mu}(x_i, x_{i+1}) \\
&\leq  C \sum_{w \in \widehat{\Gamma}}  \sum_{v \in \widehat{B}(w, \varepsilon + a)} \sum_{i=0}^{d(w,v) - 1} |f(x_i) - f(x_{i+1})|^2 \, \widehat{\mu}(x_i, x_{i+1}),
\end{align*}
where in the last line we used that $\widehat{\pi}(x_i) \approx \widehat{\pi}(w) \approx \widehat{V}(w, \varepsilon+a)$ for all $x_i \in \widehat{B}(w, \varepsilon +a).$

Further, suppose $u_1, u_2 \in \widehat{\Gamma}$ and $u_1 \sim u_2.$ Then there are a finite number of elements $w \in \widehat{\Gamma}$ such that $u_1, u_2 \in \widehat{B}(w, \varepsilon + a).$ Further, in each such ball, there are also a finite number of elements $v$ where $u_1, u_2$ could appear on a shortest path between $w$ and $v.$ Hence in the above sum, each pair $u_1, u_2$ appears at most finitely many times. Thus
\begin{align*}
\lVert \, | \nabla  (f_\varepsilon \circ \Phi) | \, \rVert_2^2
&\leq C \sum_{u_1 \in \widehat{\Gamma}} \sum_{u_2 \sim u_1} |f(u_1) - f(u_2)|^2 \; \widehat{\mu}_{u_1 u_2} = C_A \lVert \, | \nabla f | \, \rVert_2^2.
\end{align*}

\underline{Claim 3:} $\lVert f_\varepsilon \circ \Phi||_2^2 \geq C_B \lVert f \rVert_2^2$ for some constant $C_B$.

By our earlier arguments about the support of $f_\varepsilon \circ \Phi,$ we have
\begin{align*}
\lVert f_{\varepsilon} \circ \Phi \rVert_2^2 = \sum_{\alpha \in \Gamma} |f_\varepsilon \circ \Phi(\alpha)|^2 \, \pi(\alpha)
= \sum_{\alpha \in B(\Phi^{-1}(z), c_2 r)} |f_\varepsilon \circ \Phi(\alpha)|^2 \, \pi(\alpha).
\end{align*}

\noindent Take a set of points $\{w_i\}_{i=1}^{i_\varepsilon} \subset \widehat{\Gamma}$ such that 
\begin{itemize}
\item $w_i = \Phi(\alpha_i)$ for some $\alpha_i \in \Gamma$ (as the $w_i$ are distinct, so are the $\alpha_i$)
\item $\widehat{B}(z,r) \cap \widehat{B}(w_i, \varepsilon) \not = \emptyset$ 
\item $\widehat{B}(z,r) \subseteq \bigcup_{i=1}^{i_\varepsilon} \widehat{B}(w_i, \varepsilon)$.
\end{itemize}
Such a set of points exists since every point in $x \in \widehat{\Gamma}$ must be at distance at most $\varepsilon$ from a point $w = \Phi(\alpha)$. 

This choice forces each $\alpha_i \in B(\Phi^{-1}(z), c_2  r),$ so that all of the $w_i$'s appear in the $L^2$ norm of $f_{\varepsilon} \circ \Phi.$ Hence, recalling $\pi(\alpha) \approx \widehat{\pi}(w_i),$ 
\begin{align*}
\lVert f_{\varepsilon} \circ \Phi \rVert_2^2
& \geq c \sum_{i=1}^{i_\varepsilon} |f_{\varepsilon}(w_i)|^2 \, \widehat{\pi}(w_i).
\end{align*}

For each $x \in \widehat{B}(z,r),$ there exists at least one $j$ such that $x \in \widehat{B}(w_j, \varepsilon).$ Thus
\begin{align*}
\frac{f(x)\, \widehat{\pi}(x)}{|\widehat{V}(w_j, \varepsilon)|} \leq \frac{1}{|\widehat{V}(w_j, \varepsilon)|} \sum_{y \in \widehat{B}(w_j, \varepsilon)} f(y) \widehat{\pi}(y) = f_{\varepsilon}(w_j).
\end{align*}

Up to a constant, the left hand side above is just $f(x),$ since $\widehat{\pi}(x) \approx \widehat{\pi}(w_j) \approx \widehat{V}(w_j, \varepsilon).$

As at most finitely many elements $x \in \widehat{B}(z,r)$ are in the same $\widehat{B}(w_j, \varepsilon)$ (and, in fact, we have a uniform bound on how many such elements there are), and these small balls cover the entire larger ball, we find
\begin{align*}
\lVert f_{\varepsilon} \circ \Phi \rVert_2^2
& \geq c \sum_{i=1}^{i_\varepsilon} |f_{\varepsilon}(w_i)|^2 \, \widehat{\pi}(w_i)
\geq c \sum_{x \in \Omega} |f(x)|^2 \; \widehat{\pi}(x) = C_B \lVert f \rVert_2^2.
\end{align*}
\end{proof}

\section{Proof of Theorem \ref{FK_implies_HK}}\label{FK_implies_HK_app}

There are some very general results about heat kernel upper bounds and Faber-Krahn functions in the continuous case, especially in the work of Grigor'yan; see for example Theorem 5.2 of \cite{ag_upperbds}. While there are some such results in the discrete case, they are not as general. In \cite{TC_AG_GraphVol}, Coulhon and Grigor'yan prove the equivalence of $\Gamma$ being Harnack and having relative Faber-Krahn function of form (\ref{Harnack_FK}). In particular, this implies that if $\Gamma$ has a relative Faber-Krahn function of form (\ref{Harnack_FK}), then $p(n,x,y)$ satisfies the upper bound
\[ p(n,x,y) \leq \frac{C}{\sqrt{V(x, \sqrt{n}) V(y, \sqrt{n})}} \exp\Big(-\frac{d_\Gamma^2(x,y)}{c n}\Big).\]

We want a generalization of this result that says if $\Gamma$ satisfies (\ref{FK_withF}), then $p(n,x,y)$ has the upper bound given by (\ref{HK_upper_FK}); this amounts to replacing $V_\Gamma(z,r)$ by the function $F(z,r).$ It is not possible to simply replace $V$ with $F$ in the entire argument of \cite{TC_AG_GraphVol}, in part because having a relative Faber-Krahn inequality of form (\ref{Harnack_FK}) implies $\Gamma$ is doubling, which need not be true in the case considered here. Nonetheless, we still follow the same general approach of \cite{TC_AG_GraphVol, ag_upperbds}. We begin by collecting together some lemmas.

We start with an $L^2$ mean value inequality as in Section 4 of \cite{TC_AG_GraphVol}.

\begin{lemma}\label{L_2_toL_infinity}
Let $(\Gamma, \mu, \pi)$ be a graph with a relative Faber-Krahn function of the form (\ref{FK_withF}) and let $u(T,z)$ be a non-negative sub-solution of the discrete heat equation on $\mathbb{N} \times B(z,R)$ as in Definition \ref{subsoln}. Then 
\begin{equation}
u^2(T,z) \leq \frac{C}{F(z,R) \min\{T^{1/\alpha+1}R^{-2/\alpha}, R^{2}\}} \sum_{k=0}^{2T} \sum_{x \in B(z,R)} u^2(k,x) \pi(x)
\end{equation}
for all $T, R >0$ and $z \in B(x,R).$ 
\end{lemma}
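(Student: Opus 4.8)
The plan is to run the classical Moser/Nash $L^2$-mean-value iteration in the discrete parabolic setting, following Section 4 of \cite{TC_AG_GraphVol}, but carrying the function $F(z,\cdot)$ through the argument in place of the volume $V(z,\cdot)$. The starting point is the Faber-Krahn hypothesis (\ref{FK_withF}): for any ball $B(z,\rho)$ and any $\Omega\subset B(z,\rho)$ one has $\lambda_1(\Omega)\geq \tfrac{C}{\rho^2}\bigl(F(z,\rho)/\pi(\Omega)\bigr)^\alpha$. First I would record the standard consequence of a Faber-Krahn inequality, namely a Nash-type inequality localized to $B(z,\rho)$: for $f$ supported in $B(z,\rho)$,
\[
\lVert f\rVert_2^{2(1+\alpha)} \leq \frac{C\rho^2}{F(z,\rho)^\alpha}\,\bigl(\lVert\,|\nabla f|\,\rVert_2^2\bigr)\lVert f\rVert_1^{2\alpha}.
\]
This is obtained from the variational definition of $\lambda_1$ together with the usual Faber-Krahn $\Rightarrow$ Nash implication (truncate $f$ at dyadic levels and sum).

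Next I would set up the iteration. Fix $z$ and $R$, and let $\rho_k = \tfrac{R}{2}(1+2^{-k})$ be a decreasing sequence of radii shrinking from $R$ to $R/2$, with cutoff functions $\phi_k$ equal to $1$ on $B(z,\rho_{k+1})$, supported in $B(z,\rho_k)$, with $|\nabla\phi_k|\lesssim 2^k/R$. Also choose an increasing sequence of time-thresholds $T_k$ interpolating between $0$ and $T$. For a nonnegative subsolution $u$, the function $v_k = (u\phi_k)$ restricted to the time interval past $T_k$ satisfies, via the standard discrete energy (Caccioppoli) estimate for subsolutions — multiply the heat inequality by $u\phi_k^2$ and sum in space-time — a bound of the form
\[
\sup_{k\le n\le 2T}\sum_x v_k^2(n,x)\pi(x) \;+\; \sum_{n}\lVert\,|\nabla v_k|\,\rVert_2^2 \;\le\; C\Bigl(\frac{4^k}{R^2}+\frac{1}{T_{k+1}-T_k}\Bigr)\sum_{n\ge T_k}\sum_{x\in B(z,\rho_k)} u^2(n,x)\pi(x).
\]
Feeding this into the localized Nash inequality above (applied on the ball $B(z,\rho_k)$, using $F(z,\rho_k)\approx F(z,R)$ up to constants since $R/2\le\rho_k\le R$ — here one uses that $F$ is, on this range of radii, comparable to $F(z,R)$, which holds because $F$ is built from finitely many doubling volume functions) and interpolating $\lVert\cdot\rVert_1$ against $\lVert\cdot\rVert_2$ and $\lVert\cdot\rVert_\infty$ in the space-time sum, one gets a recursion $A_{k+1}\le (C b^k/F(z,R)^{\theta}\,\Xi^{\theta})\,A_k^{1+\theta}$ for the quantities $A_k := \sum_{n\ge T_k}\sum_{x\in B(z,\rho_k)}u^2(n,x)\pi(x)$, where $\theta = \alpha/(\alpha+1)$ and $\Xi$ is the relevant time-or-space scale factor. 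Iterating this super-linear recursion in the usual way yields
\[
u^2(T,z)\le \frac{C}{F(z,R)\,\min\{T^{1/\alpha+1}R^{-2/\alpha},R^2\}}\sum_{k=0}^{2T}\sum_{x\in B(z,R)}u^2(k,x)\pi(x),
\]
where the $\min$ of the two scales appears exactly as in \cite{TC_AG_GraphVol}: the term $R^2$ dominates when $T\gtrsim R^2$ (parabolic regime, plenty of time), and $T^{1/\alpha+1}R^{-2/\alpha}$ dominates for $T\ll R^2$ (the "small time" regime where the cutoff in time is the binding constraint).

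The main obstacle — and the only place where this differs in substance from the cited proof — is that $\Gamma$ here is \emph{not} assumed doubling, so I cannot use volume doubling freely. The fix is that the iteration only ever evaluates $F(z,\cdot)$ at radii in the fixed range $[R/2,R]$ and always at the \emph{same} center $z$; on that range $F(z,\rho)\approx F(z,R)$ with a constant depending only on the (finitely many) doubling constants of the page volumes $V_i$ and the number of pages $l$, which is exactly the content of (\ref{fcn_F}) together with the doubling of each $V_i$. So no global doubling is needed — only this local, fixed-center comparability, which the structure of $F$ supplies. Care is also needed that the cutoff functions $\phi_k$ are genuinely supported in balls so that the \emph{relative} Faber-Krahn inequality (which is stated for $\Omega\subset B$) applies; this is automatic from the construction of the $\rho_k$. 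Everything else — the discrete Caccioppoli estimate, the Nash-type inequality from Faber-Krahn, the Moser iteration bookkeeping — is routine and parallels \cite{ag_upperbds, TC_AG_GraphVol} line for line.
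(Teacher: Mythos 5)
The paper's proof follows \cite{TC_AG_GraphVol} (Sections 4.4--4.5) while keeping the Faber-Krahn quantity $\widetilde\beta = \frac{a}{R^2}F(z,R)^\alpha$ \emph{fixed} throughout the iteration, which means the Faber-Krahn inequality is always invoked on the single ball $B(z,R)$ and never on intermediate balls $B(z,\rho)$. Your version instead applies the Nash inequality on the shrinking balls $B(z,\rho_k)$ and invokes the comparability $F(z,\rho_k)\approx F(z,R)$ for $\rho_k\in[R/2,R]$. That comparability is \emph{false} in general, and the paper's own Example~\ref{cross_ex} is a counterexample: there, with $w$ on the $x_2$-axis and $d(o,w)\in(R/2,R]$, one has $V_{\min}(w,R/2)\approx R^{5}$ while $V_{\min}(w,R)\approx R^{4}$, a ratio of order $R$ that is unbounded. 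The reason is that $J_B$ (the set of pages visible from $B$) and the minimizing set $[B]_\delta\cap\Gamma_i\cap[\Gamma_0]_\delta$ both depend on the radius of $B$, so $F(z,\cdot)$ is not a volume function, is not monotone, and is not locally doubling at a fixed center --- the doubling of the individual $V_i$'s does not rescue it across the radius where a new page appears. The fix is simply to apply Faber-Krahn at the fixed radius $R$ (any $\Omega\subset B(z,\rho_k)$ is also contained in $B(z,R)$, and $\rho_k\approx R$ handles the $1/\rho^2$ prefactor), which is precisely what the paper does by treating $\widetilde\beta$ as a constant of the iteration; as written, however, your argument has a genuine gap.

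A secondary point: the paper identifies the one place where the iteration of \cite{TC_AG_GraphVol} does not go through ``for free'' --- the inequality $\beta\le c\,\pi(z)^\alpha$, which for $\widetilde\beta$ reduces to $F(z,R)\le cR^{2/\alpha}\pi(z)$ and is proved by testing the Faber-Krahn inequality (\ref{FK_withF}) with the Dirac function at $z$ (together with subordination of $\mu$ to $\pi$). That verification is the actual content of the lemma. Your proposal dismisses all the discrete-specific steps as ``routine'' and ``line for line''; this is where you would need to check that the discrete Moser/Nash bookkeeping closes, and that check is exactly what the paper supplies and what your outline omits.
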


\begin{proof}
We can essentially copy the proof given in \cite{TC_AG_GraphVol}, Sections 4.4 and 4.5. Mostly, they treat 
\[ \beta = \frac{a}{R^2}V(z,R)^\alpha\]
as a constant. All of the differences enter into the quantity $\beta,$ as we have instead 
\[ \widetilde{\beta} = \frac{a}{R^2} F(z,R)^\alpha.\] 

The only time the volume is explicitly used in \cite{TC_AG_GraphVol} is to prove that 
\begin{equation}\label{magic_beta}
 \beta \leq c \pi(z)^\alpha \quad \implies \beta^{\alpha^{-1}\gamma^{-N}} \leq c \pi(z)^{\gamma^{-N}},
 \end{equation}
an inequality which ``magically'' makes certain factors in the somewhat involved iteration proof cancel. However, inequality (\ref{magic_beta}) holds for $\widetilde{\beta}$ for essentially the same reasons. The inequality will follow once we show
\[ \frac{F(z,R)}{\pi(z)} \leq c R^{2/\alpha}.\]

Consider $B=B(z,R)$ and let $\Omega = \{z\},$ which is a subset of $B(z,R).$ Let $f$ be the function on $\Gamma$ that is $1$ at $z$ and zero everywhere else. Therefore, by the form (\ref{FK_withF}) of the Faber-Krahn function on $\Gamma,$
\begin{align*}
&\frac{1}{2} \sum_{x,y \in \Gamma} |f(y)-f(x)|^2 \mu_{xy} \geq \Lambda(B, \pi_z) \sum_{x \in B(z,R)} |f(x)|^2 \pi_x \\
&\implies \sum_{y \sim z} \mu_{yz} \geq \frac{a}{R^2}\Big(\frac{F(z,R)}{\pi_z}\Big)^\alpha \pi_z.
\end{align*} 
Since the weights are subordinated to the measure, it follows that $\sum_{y\sim z} \mu_{yz} \leq \pi_z.$ Therefore we get precisely the desired inequality 
\[ \Big(\frac{F(z,R)}{\pi_z}\Big)^\alpha \leq C R^2.\]

Other than this modification, the proof goes through exactly as in \cite{TC_AG_GraphVol}. 
\end{proof}

Lemma \ref{L_2_toL_infinity} can be considered an $L^2 \to L^\infty$ type estimate. We want to obtain an $L^1$ version of this result. There are several possible ways to proceed, but the approach we take here is to try to directly repeat the arguments given in the continuous setting in \cite{ag_upperbds}. One of the main obstacles present in the discrete work \cite{TC_AG_GraphVol} is that at the time of that paper's publication, there was not yet a discrete version of the integrated maximum principle, which is a key component of the arguments of \cite{ag_upperbds}. Hence the methods in \cite{TC_AG_GraphVol} necessarily avoid appealing to this principle. However, Coulhon, Grigor'yan, and Zucca give a discrete version of the integrated maximum principle in \cite{TC_AG_Zucca_MaxPrin}. We make use of several of their results to follow the method of \cite{ag_upperbds}. 

\begin{lemma}[Discrete integrated maximum principle, Theorem 2.2 \cite{TC_AG_Zucca_MaxPrin}]\label{disc_max_prin}~
Let $(\Gamma, \mu, \pi)$ be connected graph with controlled and uniformly lazy weights, where $\alpha$ is a constant such that $\mathcal{K}(x,x) \geq \alpha$ for all $x \in \Gamma.$ Let $f$ be a strictly positive function on $[0, n] \times \Gamma$ such that for all $x \in \Gamma,\ k \in [0,n)$, 
\begin{equation}\label{disc_max_prin_cond}
\partial_k f(x) + \frac{|\nabla f_{k+1}|^2}{4 \alpha f_{k+1}}(x) \leq 0. 
\end{equation}
Then, for any solution $u$ of the heat equation in $[0,n) \times \Gamma,$ the quantity 
\begin{equation}
J_k = J_k(u) := \sum_{x \in \Gamma} u_k^2(x) f_k(x) \pi(x)
\end{equation}
is non-increasing in $k,$ that is, $J_{k+1} \leq J_k$ for all $k \in [0, n).$ 
\end{lemma}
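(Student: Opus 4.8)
The plan is to recover this from the discrete heat-equation structure by the same summation-by-parts argument that, in the continuous setting, underlies Grigor'yan's integrated maximum principle (see \cite{ag_upperbds}); the discrete-time statement is exactly Theorem 2.2 of \cite{TC_AG_Zucca_MaxPrin}, and I sketch its proof for completeness. The starting point is that a solution of the discrete heat equation satisfies $u_{k+1}=Pu_k$, where $(Ph)(x)=\sum_y\mathcal K(x,y)h(y)$ is the Markov operator, which is self-adjoint on $\ell^2(\Gamma,\pi)$ by reversibility.

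Writing $v:=u_k$ and $g:=f_{k+1}>0$, and using the hypothesis \eqref{disc_max_prin_cond} in the form $f_k\ge f_{k+1}+\frac{|\nabla f_{k+1}|^2}{4\alpha f_{k+1}}$, one gets
\[ J_{k+1}-J_k\ \le\ \sum_{x}\big[(Pv)^2(x)-v^2(x)\big]\,g(x)\,\pi(x)\ -\ \sum_{x}v^2(x)\,\frac{|\nabla g|^2(x)}{4\alpha\,g(x)}\,\pi(x), \]
so the statement reduces to the inequality
\[ \sum_{x}\big[(Pv)^2(x)-v^2(x)\big]\,g(x)\,\pi(x)\ \le\ \sum_{x}v^2(x)\,\frac{|\nabla g|^2(x)}{4\alpha\,g(x)}\,\pi(x). \]
To prove this I would write $(Pv)(x)=v(x)+\delta(x)$ with $\delta(x):=\sum_y\mathcal K(x,y)(v(y)-v(x))$, so that $(Pv)^2-v^2=2v\delta+\delta^2$. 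The cross term $\sum_x 2v(x)\delta(x)g(x)\pi(x)$ is handled by summation by parts against $\pi$: after symmetrizing it equals
\[ -\sum_{x,y}\mu_{xy}\,g(x)\,(v(x)-v(y))^2\;-\;\tfrac12\sum_{x,y}\mu_{xy}\,(v(x)^2-v(y)^2)\,(g(x)-g(y)), \]
and the second, sign-indefinite piece is controlled by Young's inequality, producing a term $\le\sum_x v^2(x)\frac{|\nabla g|^2(x)}{4\alpha g(x)}\pi(x)$ plus a multiple of the $g$-weighted Dirichlet energy of $v$. The purely discrete term $\sum_x\delta(x)^2 g(x)\pi(x)$ has no continuous analogue; it is absorbed using laziness: since $\delta(x)=\sum_{y\ne x}\mathcal K(x,y)(v(y)-v(x))$ with $\sum_{y\ne x}\mathcal K(x,y)=1-\mathcal K(x,x)\le 1-\alpha$, Cauchy--Schwarz gives $\delta(x)^2\le 2(1-\alpha)\,|\nabla v|^2(x)$, i.e.\ a bound by the local Dirichlet energy of $v$. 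The coefficient $4\alpha$ in the hypothesis is tuned so that the $g$-weighted Dirichlet energies of $v$ coming from the cross term and from $\delta^2$ cancel, leaving only the desired right-hand side.

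I expect the delicate point to be the constant bookkeeping in that last step: unlike the continuous case, where one integration by parts and one Cauchy--Schwarz suffice, here the linear term $2v\delta$ and the quadratic term $\delta^2$ must be treated simultaneously, and it is only after using $\mathcal K(x,x)\ge\alpha$ to dominate $\delta^2$ by the local Dirichlet energy of $v$ that the various weighted energy terms line up. One also has to resist bounding $(Pv)^2\le P(v^2)$ by plain Jensen at the outset, since that discards exactly the $\delta$-information that makes the factor $4\alpha$ work. The complete verification is carried out in \cite{TC_AG_Zucca_MaxPrin}.
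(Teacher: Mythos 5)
The paper does not prove this lemma at all; it is stated as a black-box citation of Theorem~2.2 of \cite{TC_AG_Zucca_MaxPrin}, so there is no ``paper's own proof'' to compare against. Given that, your sketch is a faithful reconstruction of the strategy used in the cited reference, and the key structural decisions are correct: reducing to
\[
\sum_x \big[(Pv)^2-v^2\big]\,g\,\pi \ \le\ \sum_x v^2\,\frac{|\nabla g|^2}{4\alpha g}\,\pi,
\]
expanding $(Pv)^2-v^2=2v\delta+\delta^2$ rather than applying Jensen's $(Pv)^2\le P(v^2)$ (which, as you correctly note, would discard the cancellation), symmetrizing the cross term via reversibility, and using laziness $\mathcal{K}(x,x)\ge\alpha$ together with Cauchy--Schwarz to absorb the genuinely discrete term $\delta^2\le 2(1-\alpha)|\nabla v|^2$ into the Dirichlet energy freed up by the cross term. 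Your identity for the symmetrized cross term and the bound on $\delta^2$ both check out.

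One caution on the step you flag yourself as delicate: after applying Young's inequality to the sign-indefinite piece $-\tfrac12\sum_{x,y}\mu_{xy}(v(x)^2-v(y)^2)(g(x)-g(y))$, the natural bound produces a weight $(v(x)+v(y))^2/g(x)$ in the error term, and symmetrizing $v(y)^2/g(x)$ back onto the $x$-variable leaves a $1/g(y)$ rather than a $1/g(x)$. So the naive bookkeeping does not land exactly on $\sum_x v^2|\nabla g|^2/(4\alpha g)\pi$; one needs either a $g$-dependent weighting in Young's inequality or a differently normalized intermediate quantity, which is precisely the care taken in \cite{TC_AG_Zucca_MaxPrin}. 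Since you explicitly defer the verification to that reference and the architecture of the argument is correct, I view this as an honest sketch rather than a gap --- but be aware that the ``tuning so the energies cancel'' is not a one-line Young's inequality, and would need the reference's exact weighting to close.
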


\begin{lemma}[Proposition 2.5 of \cite{TC_AG_Zucca_MaxPrin}]\label{good_max_fcn}
Let $(\Gamma, \mu, \pi)$ be a connected graph with controlled and uniformly lazy weights. Let $\rho$ be a $1$-Lipschitz function on $\Gamma$ such that $\inf \rho \geq 1.$ Then there exists a positive number $D_\alpha$ (depending only on $\alpha$ from the uniformly lazy condition) such that for all $D \geq D_\alpha,$ the weight function 
\begin{equation}
f_k(x) = f_k^D(x) := \exp \Big( -\frac{\rho^2(x)}{D(n+1-k)}\Big)
\end{equation}
satisfies (\ref{disc_max_prin_cond}) for all $x \in \Gamma, \ k \in [0,n).$ 
\end{lemma}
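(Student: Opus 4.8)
The plan is to verify the pointwise inequality (\ref{disc_max_prin_cond}) by a direct computation modelled on the classical continuous argument, carrying the discrete error terms explicitly. Write $t_k = n+1-k$, so that $t_k \ge 2$ and $t_{k+1} = t_k - 1 \ge 1$ for $k \in [0,n)$, and put $g_k(x) = -\rho^2(x)/(Dt_k)$, so that $f_k = e^{g_k}$. Dividing (\ref{disc_max_prin_cond}) by $f_{k+1}(x) > 0$, it is equivalent to
\[
\Big(1 - \frac{f_k(x)}{f_{k+1}(x)}\Big) + \frac{1}{8\alpha}\sum_{y} \Big(\frac{f_{k+1}(y)}{f_{k+1}(x)} - 1\Big)^2 \mathcal{K}(x,y) \le 0,
\]
the discrete analogue of $\partial_t g + \tfrac14|\nabla g|^2 \le 0$, which in the continuous case (with terminal time $T$) holds for $D \ge 1$ since $\partial_t g = -\rho^2/(D(T-t)^2)$ while $|\nabla g|^2 \le 4\rho^2/(D^2(T-t)^2)$ by $|\nabla \rho| \le 1$. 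First I would compute the two exponents appearing here. On the time side, $g_k(x) - g_{k+1}(x) = \rho^2(x)/(Dt_kt_{k+1}) =: A \ge 0$, whence $f_k(x)/f_{k+1}(x) = e^A$ and the first term equals $1 - e^A \le -A$. On the space side, for $y \sim x$ the $1$-Lipschitz property and $\rho \ge 1$ give $|\rho^2(x) - \rho^2(y)| \le \rho(x) + \rho(y) \le 3\min\{\rho(x),\rho(y)\}$; setting $c_y = (\rho^2(x) - \rho^2(y))/(Dt_{k+1})$ one has $f_{k+1}(y)/f_{k+1}(x) = e^{c_y}$ and, using $t_k \le 2t_{k+1}$, the key bound $c_y^2 \le 18A/D$.

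With these in hand, and using that $\mathcal{K}(x,x) \ge \alpha$ so the $y = x$ term drops and $\sum_{y \ne x}\mathcal{K}(x,y) \le 1 - \alpha$, the desired inequality follows from the scalar estimate
\[
\frac{1-\alpha}{8\alpha}\big(e^{\sqrt{18A/D}} - 1\big)^2 \le e^A - 1 \qquad \text{for all } A \ge 0 ,
\]
since $(e^{c_y}-1)^2 \le (e^{|c_y|}-1)^2 \le (e^{\sqrt{18A/D}}-1)^2$. I would verify this scalar inequality by cases. For $A$ near $0$ it reduces, after linearizing, to $\tfrac{9(1-\alpha)}{4\alpha D} \le 1$, i.e.\ $D \ge \tfrac{9(1-\alpha)}{4\alpha}$; for $A$ large, the right side is exponential in $A$ while the left side is exponential only in $\sqrt{A}$, so the inequality holds for all large $A$ with any fixed $D$; and on the remaining compact range of $A$ the left side becomes small, while the right side is bounded below, once $D$ is large enough depending on $\alpha$. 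Taking $D_\alpha$ to be the largest of the thresholds produced by these cases --- which depends only on $\alpha$, with $D_\alpha \asymp 1/\alpha$ --- finishes the proof.

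The step I expect to be the main obstacle is the intermediate range of $A$ (equivalently $\rho(x) \asymp \sqrt{Dt_{k+1}}$), where both the negative time term and the gradient term are genuinely of exponential size, so the first-order estimate valid near $A = 0$ is useless and one cannot absorb the error uniformly in $\alpha$; the growth $D_\alpha \to \infty$ as $\alpha \to 0$ (barely-lazy graphs) is forced already by the behaviour near $A = 0$ and must be tracked. Making the scalar interpolation rigorous over all three regimes, with a single threshold depending only on $\alpha$, is exactly the content of \cite[Proposition~2.5]{TC_AG_Zucca_MaxPrin}; since our graphs are uniformly lazy (providing the constant $\alpha$) and have controlled weights, their argument transcribes to the present situation essentially verbatim.
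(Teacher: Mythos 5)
The paper does not prove this lemma; it is imported verbatim as Proposition~2.5 of the cited paper of Coulhon, Grigor'yan, and Zucca, so there is no internal proof to compare against. Your sketch is nevertheless a correct reconstruction of the discrete analogue of the continuous estimate $\partial_t g + \tfrac14|\nabla g|^2 \le 0$. The reformulation of~(\ref{disc_max_prin_cond}) after dividing by $f_{k+1}(x)$, the time increment $A = \rho^2(x)/(Dt_kt_{k+1})$ with $1 - e^A \le -A$, the Lipschitz estimate $|\rho^2(x)-\rho^2(y)|\le 3\min\{\rho(x),\rho(y)\}$ (using both the $1$-Lipschitz bound and $\rho\ge 1$), the ratio $t_k/t_{k+1}\le 2$ yielding $c_y^2\le 18A/D$, the observation $(e^c-1)^2\le (e^{|c|}-1)^2$, and the use of $\sum_{y\ne x}\mathcal{K}(x,y)\le 1-\alpha$ are all right and reduce matters to the scalar inequality you display, exactly as claimed.

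Your three-regime treatment of that scalar inequality is sound in principle and the threshold it produces is non-circular in $\alpha$ and $D$, but since you flag it as the ``main obstacle'' it is worth noting that it can be closed uniformly without regime-splitting. Put $\beta := D/18$ and $u := \sqrt{18A/D}$, so $\beta u^2 = A$. For $\beta\ge 1$ the map $v\mapsto e^{\beta v}-\beta e^v$ is nondecreasing, hence $e^{\beta u^2}-1\ge\beta(e^{u^2}-1)$. An elementary one-variable calculation (the ratio $(e^u-1)^2/(e^{u^2}-1)$ tends to $1$ at $0$, to $0$ at $\infty$, and has its unique maximum $e-1$ at $u=1$) gives $(e^u-1)^2\le 2(e^{u^2}-1)$ for all $u\ge 0$. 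Combining,
\[
\frac{1-\alpha}{8\alpha}\bigl(e^{\sqrt{18A/D}}-1\bigr)^2 \;\le\; \frac{1-\alpha}{4\alpha\beta}\bigl(e^{\beta u^2}-1\bigr) \;=\; \frac{1-\alpha}{4\alpha\beta}\bigl(e^{A}-1\bigr),
\]
so any $D\ge\max\bigl\{18,\,\tfrac{9(1-\alpha)}{2\alpha}\bigr\}$ works, giving $D_\alpha\asymp 1/\alpha$ as you predicted. In short, your approach is correct and is essentially the argument of the cited reference; the only thing left sketchy is the scalar estimate, and that can be filled in as above.
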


\begin{lemma}[Proposition 5.3 of \cite{TC_AG_Zucca_MaxPrin}]\label{E_D_lemma} 
Again let $(\Gamma,\mu,\pi)$ be a connected graph with controlled and uniformly lazy weights. Define the quantity 
\begin{equation}
E_D(k,x) := \sum_{z \in \Gamma} p^2(k,x,z) \exp \Big( \frac{\rho^2(x,z)}{Dk}\Big) \, \pi(z),
\end{equation}
where $\rho(x,z) := \max \{ d(x,z) , 1 \}.$ 

Then for all $x,y \in \Gamma, \ k \in \N,$ and all $D>0,$ 
\begin{equation}
p(2k, x,y) \leq \sqrt{E_D(k,x) E_D(k,y)} \exp \Big( - \frac{d^2(x,y)}{4Dk} \Big).
\end{equation}
\end{lemma}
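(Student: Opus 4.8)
The plan is to prove Lemma \ref{E_D_lemma} by a direct reversibility-plus-Cauchy--Schwarz argument, of the kind classical for Gaussian-type off-diagonal bounds. First I would record two structural facts. Since $\mathcal{K}$ is reversible with respect to $\pi$, the heat kernel $p(k,\cdot,\cdot)$ is symmetric in its two spatial arguments, and the Chapman--Kolmogorov identity gives $p(2k,x,y) = \sum_{z\in\Gamma} p(k,x,z)\,p(k,z,y)\,\pi(z) = \sum_{z\in\Gamma} p(k,x,z)\,p(k,y,z)\,\pi(z)$. Second, because $\Gamma$ has controlled (hence bounded-degree) weights and the walk takes at most unit steps, $p(k,x,z)=0$ whenever $d(x,z)>k$, so the sum defining $E_D(k,x)$ is in fact finite; this makes the argument unconditional (alternatively one could simply note that the asserted bound is vacuous whenever the right-hand side is infinite).

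Next I would insert Gaussian weights into the Chapman--Kolmogorov sum, writing each summand $p(k,x,z)\,p(k,y,z)\,\pi(z)$ as $\bigl[p(k,x,z)e^{d_1^2(x,z)/(2Dk)}\bigr]\bigl[p(k,y,z)e^{d_1^2(y,z)/(2Dk)}\bigr]\,e^{-(d_1^2(x,z)+d_1^2(y,z))/(2Dk)}\,\pi(z)$. The point is then to bound the leftover exponential factor uniformly in $z$. This rests on the elementary distance estimate $d_1^2(x,z)+d_1^2(y,z)\ge \tfrac12\bigl(d_1(x,z)+d_1(y,z)\bigr)^2 \ge \tfrac12\bigl(d(x,z)+d(z,y)\bigr)^2 \ge \tfrac12\,d^2(x,y)$, which uses $d_1\ge d$, the triangle inequality for $d$, and $a^2+b^2\ge\tfrac12(a+b)^2$; it yields $e^{-(d_1^2(x,z)+d_1^2(y,z))/(2Dk)}\le e^{-d^2(x,y)/(4Dk)}$ for every $z\in\Gamma$.

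Finally I would pull this uniform factor out of the sum (all remaining terms being non-negative) and apply the Cauchy--Schwarz inequality with respect to the measure $\pi$ to the residual sum $\sum_z \bigl[p(k,x,z)e^{d_1^2(x,z)/(2Dk)}\bigr]\bigl[p(k,y,z)e^{d_1^2(y,z)/(2Dk)}\bigr]\pi(z)$, which is bounded by $\bigl(\sum_z p^2(k,x,z)e^{d_1^2(x,z)/(Dk)}\pi(z)\bigr)^{1/2}\bigl(\sum_z p^2(k,y,z)e^{d_1^2(y,z)/(Dk)}\pi(z)\bigr)^{1/2}=\sqrt{E_D(k,x)\,E_D(k,y)}$. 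Combining the two resulting displays gives precisely $p(2k,x,y)\le\sqrt{E_D(k,x)\,E_D(k,y)}\,\exp\bigl(-d^2(x,y)/(4Dk)\bigr)$.

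There is no serious obstacle here; the only thing that requires care is the exponent bookkeeping, in particular the constant $\tfrac14$ in the final Gaussian factor, which arises from the $\tfrac12$ in $a^2+b^2\ge\tfrac12(a+b)^2$ together with the $\tfrac12$-splitting of each weight $e^{d_1^2/(2Dk)}$ across the two factors. The presence of $d_1$ rather than $d$ inside $E_D$ is harmless: only the inequality $d_1\ge d$ is used to reach $d^2(x,y)$ (rather than $d_1^2(x,y)$) in the conclusion, and the truncation at $1$ plays no further role beyond keeping all quantities positive.
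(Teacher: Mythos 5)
Your proof is correct. The paper does not prove this lemma at all — it is imported verbatim as Proposition 5.3 of \cite{TC_AG_Zucca_MaxPrin} — and your argument (Chapman--Kolmogorov with reversibility, splitting the Gaussian weight $e^{d_1^2/(2Dk)}$ onto each factor, the elementary bound $d_1^2(x,z)+d_1^2(y,z)\geq \tfrac12 d^2(x,y)$, then Cauchy--Schwarz in $L^2(\pi)$) is exactly the standard proof given in that reference, with the exponent bookkeeping done correctly. You also correctly read the displayed exponential through its sign typo (the statement's $-\frac{-d^2(x,y)}{4Dk}$ is meant as $-\frac{d^2(x,y)}{4Dk}$, which is how the lemma is applied in Appendix \ref{FK_implies_HK_app}).
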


We now use the above lemmas to prove a discrete version of Theorem 4.1 of \cite{ag_upperbds}. The proof is essentially the same and simply requires appealing to the above discrete setting lemmas (instead of their continuous analogs). 

\begin{lemma}\label{est_with_ball} 
As above, let $(\Gamma, \mu, \pi)$ be a connected graph with controlled and uniformly lazy weights with relative Faber-Krahn function of form (\ref{FK_withF}). Let $B=B(z,R), \  \rho(x,y) = \max\{1, d(x,y)\},$ and $\rho(x, B) = \max \{ 1 , d(x, B(z,R)) \} = \max \{ 1, (d(x,z) - R)_+\}$. Then 
\begin{align*}
\sum_{y \in \Gamma} p(k,y,z)^2 \exp\Big(\frac{\rho(y,B)^2}{\hat{c}(T+1)}\Big)\, \pi(y) &\leq \frac{\tilde{c}\, T}{F(z,R) \min \{ R^2, T^{1+1/\alpha} R^{-2/\alpha}\}} \\
&= \frac{c}{F(z,R) \min\{(R^2/T), (T/R^2)^{1/\alpha}\}}.
\end{align*}
\end{lemma}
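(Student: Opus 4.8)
This is the discrete, $F$-weighted analogue of \cite[Theorem~4.1]{ag_upperbds}, and the plan is to reproduce that argument with the discrete ingredients assembled above in place of their continuous counterparts: Lemma~\ref{L_2_toL_infinity} for the continuous local $L^2$ mean-value inequality and Lemmas~\ref{disc_max_prin}--\ref{good_max_fcn} for the continuous integrated maximum principle. Fix $z$, $R$ and $B=B(z,R)$, and set $\rho(y):=\widetilde d(y,B)=\max\{1,(d(y,z)-R)_+\}$. Since $d(\cdot,z)$, $t\mapsto(t-R)_+$ and $t\mapsto\max\{1,t\}$ are each $1$-Lipschitz, $\rho$ is $1$-Lipschitz with $\inf\rho\ge1$ and $\rho(z)=1$, so it is an admissible profile for Lemma~\ref{good_max_fcn}. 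The function $u_k(y):=p(k,y,z)$ is a non-negative solution of the discrete heat equation whose initial datum is the point mass at $z$ normalised by $\pi(z)$, and by symmetry of the heat kernel $\|u_k\|_2^2=p(2k,z,z)$. I carry out the estimate in the regime where the time variable $k$ is comparable to $T$; the case $k\le T$ reduces to this and is the only one of interest ($k\approx T$ is what is used in Lemma~\ref{E_D_lemma}).

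The first step is the on-diagonal bound $p(2k,z,z)\le C/F(z,\sqrt k)$. This is exactly the Coulhon--Grigor'yan iteration of \cite{TC_AG_GraphVol} (compare \cite{ag_upperbds}), run with Lemma~\ref{L_2_toL_infinity}: feed $u_k(x)=p(k,x,z)$ and radius $\sqrt k$ into Lemma~\ref{L_2_toL_infinity}, bound the bulk sum by $\sum_{j\le2k}p(2j,z,z)$, split at $j=k/2$, and iterate over dyadic scales. The only place the geometry of $F$ intervenes is through $F(z,R)/\pi(z)\le CR^{2/\alpha}$, which was proved inside Lemma~\ref{L_2_toL_infinity}; since $F$ is only ever evaluated at one radius or at comparable radii, the fact that $F$ need not be doubling is immaterial. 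The second step is a Davies--Gaffney tail estimate with the right prefactor. Applying the integrated maximum principle (Lemma~\ref{disc_max_prin}) with the admissible weight $f_k(y)=\exp(2\lambda\rho(y)-c_\alpha\lambda^2k)$ gives $\sum_y p(k,y,z)^2e^{2\lambda\rho(y)}\pi(y)\le e^{c_\alpha\lambda^2k}/\pi(z)$ for all $\lambda>0$; a Cauchy--Schwarz semigroup splitting $p(k,y,z)=\sum_w p(\lceil k/2\rceil,y,w)\,p(\lfloor k/2\rfloor,w,z)\,\pi(w)$ then trades the factor $1/\pi(z)$ for the on-diagonal quantity $p(2\lfloor k/2\rfloor,z,z)\le C/F(z,\sqrt k)$, at the cost of enlarging the constant in the exponent. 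Optimising in $\lambda$ yields, for every $s\ge0$,
\[
\sum_{y\,:\,\rho(y)\ge s}p(k,y,z)^2\,\pi(y)\ \le\ \frac{C}{F(z,\sqrt k)}\exp\!\Big(-\frac{c\,s^2}{k}\Big).
\]

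The third step assembles these over dyadic shells of $\rho$. On $\{\rho(y)\le\sqrt{\hat c(T+1)}\}$ the weight $\exp(\rho^2(y)/(\hat c(T+1)))$ is at most $e$, so this region contributes at most $e\,p(2k,z,z)\le C/F(z,\sqrt k)$. On the shell $\{2^{j-1}\sqrt{\hat c(T+1)}\le\rho(y)\le 2^{j}\sqrt{\hat c(T+1)}\}$ the weight is at most $e^{4^{j}}$, while the displayed tail bound together with $k\lesssim T$ controls the mass there by $\tfrac{C}{F(z,\sqrt k)}e^{-c\,4^{j-1}\hat c}$; taking $\hat c$ large enough that $c\hat c>4$ makes these products summable in $j$, so altogether $\sum_y p(k,y,z)^2\exp(\rho^2(y)/(\hat c(T+1)))\pi(y)\le C/F(z,\sqrt k)$. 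Finally, $k\approx T$ gives $F(z,\sqrt k)\approx F(z,\sqrt T)$, and the comparison $F(z,R)\le C(R/r)^{2/\alpha}F(z,r)$ for $R\ge r$ — which follows from the Faber-Krahn inequality (\ref{FK_withF}), in our setting e.g.\ by testing the pages' Faber-Krahn inequalities on balls when $F=V_{\min}$ — rewrites $C/F(z,\sqrt T)$ as $\tfrac{\tilde c\,T}{F(z,R)\min\{R^2,\,T^{1+1/\alpha}R^{-2/\alpha}\}}$; the two right-hand sides in the statement coincide once $T$ is distributed through the minimum. Parity and the trivial case $r<1$ cause no difficulty, by uniform laziness and because $\lambda_1$ of a singleton is $\approx 1$.

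The step I expect to be the genuine obstacle is the second one: faithfully reproducing the iteration and the Davies--Gaffney/on-diagonal interplay of \cite{ag_upperbds,TC_AG_GraphVol} while systematically replacing the volume by the function $F$ and checking at each appearance that doubling of $F$ is never silently used. This is bookkeeping-heavy but not conceptually new; the one substantive point — that $F$ plays the role of a volume here — is confined to the two inequalities $F(z,R)\le CR^{2/\alpha}\pi(z)$ and $F(z,R)\le C(R/r)^{2/\alpha}F(z,r)$, both read off directly from the Faber-Krahn inequality.
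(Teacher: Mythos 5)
Your route is genuinely different from the paper's, but it contains a real gap precisely at the point you yourself flagged as where $F$ stands in for a volume. You first derive an on-diagonal bound $p(2k,z,z)\lesssim F(z,\sqrt k)^{-1}$, then a Davies--Gaffney tail, then sum over dyadic shells, and this produces a bound of the form $C/F(z,\sqrt T)$. To match the claimed right-hand side $\tilde c\,T/\bigl(F(z,R)\min\{R^2,T^{1+1/\alpha}R^{-2/\alpha}\}\bigr)$ for an arbitrary radius $R$, you then invoke the comparison $F(z,R)\le C(R/r)^{2/\alpha}F(z,r)$ for $R\ge r$. That comparison is exactly the kind of pseudo-doubling statement the paper goes out of its way to avoid, and it does \emph{not} follow from (\ref{FK_withF}). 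The Faber--Krahn inequality yields only the one-sided bound $F(z,R)\le C R^{2/\alpha}\pi(z)$ (as proved inside Lemma~\ref{L_2_toL_infinity}); there is no matching lower bound $F(z,r)\gtrsim r^{2/\alpha}\pi(z)$, and $F=V_{\min}$ is expressly noted in the paper to be non-monotone in $r$ (the admissible set of base points shrinks as $r$ decreases, so the minimizing page/point at radius $R$ may simply not be available at radius $r$). So the final conversion between $F(z,\sqrt T)$ and $F(z,R)$ is unjustified, and your argument only establishes the lemma in the special case $R\approx\sqrt T$ (which, to be fair, is the only case used in the proof of Theorem~\ref{FK_implies_HK}, with $R=\sqrt{2k}$).

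The paper avoids this issue entirely by a different mechanism: it applies Lemma~\ref{L_2_toL_infinity} \emph{once}, with both the time $T$ and the radius $R$ left free, to the Dirichlet-smoothed solution $u_\Omega(k,x)=\sum_y p_{\Omega,D}(k,x,y)\varphi(y)\pi(y)$; then it kills the bulk sum with the integrated maximum principle applied to the Gaussian weight $f(s,x)=\exp(-\widetilde d(x,B)^2/(\hat c(T+1-s)))$ (already certified admissible by Lemma~\ref{good_max_fcn}); and finally it dualizes — the resulting bound on $|\sum_y p(T,z,y)\varphi(y)\pi(y)|$ over all test functions $\varphi$ gives, via the operator-norm/Cauchy--Schwarz identity, exactly the weighted $\ell^2$ sum on the left-hand side. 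Because $F$ is evaluated at the single radius $R$ throughout, no comparison of $F$ at two scales is ever needed; the time-vs-radius mismatch is carried by the factor $\min\{T^{1+1/\alpha}R^{-2/\alpha},R^2\}$ instead. A secondary, smaller issue in your writeup: you assert that the exponential weight $f_k(y)=\exp(2\lambda\rho(y)-c_\alpha\lambda^2 k)$ is admissible for Lemma~\ref{disc_max_prin}, but Lemma~\ref{good_max_fcn} only certifies the Gaussian-in-$\rho$ form, so admissibility of the exponential form would need a separate (plausible but unsupplied) check of condition (\ref{disc_max_prin_cond}).
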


\begin{proof}
Fix $B = B(z,R).$ Let $\varphi$ be a function on $\Gamma$ with finite support, and take $\Omega \subset \Gamma$ finite and containing both $\supp \varphi$ and $B(z,R).$

Set 
\begin{equation}
u_\Omega(k,x) := \sum_{y \in \Omega} p_{\Omega,D}(k,x,y) \varphi(y) \pi(y).
\end{equation}
By properties of the heat kernel $p, \ u_\Omega$ is a solution of the heat equation on $\N \times B(z,R)$. Without loss of generality, we may assume that $\varphi \geq 0,$ and, consequently, the same is true of $u_\Omega.$  Applying Lemma \ref{L_2_toL_infinity} to $u_\Omega$ yields
\begin{equation}
u_\Omega^2(T,z) \leq \underbrace{\frac{C}{F(z,R) \min\{T^{1/\alpha+1}R^{-2/\alpha}, R^{2}\}}}_{A} \sum_{k=0}^{2T} \, \sum_{x \in B(z,R)} u_\Omega^2(k,x) \pi(x).
\end{equation}
Now consider the function 
\[ f(s,x) := \exp \Big( - \frac{\rho(x,B)^2}{\hat{c}(T+1-s)}\Big).\]
For $\hat{c}$ sufficiently large, $f(s,x)$ is the sort of function considered Lemma \ref{good_max_fcn}. When $x \in B, \ \rho(x,B) = 1$ and $f(x,s) = \exp(-1/(\hat{c}(T+1-s))),$ which is largest when $s$ is smallest. In particular, when $x \in B,\ \exp(-1/\hat{c}) \leq f(s,x) \leq \exp(-1/(\hat{c}(T+1))).$ 
Therefore
\begin{align*}
u_\Omega^2(T,z) &\leq A \sum_{k=0}^{2T} \, \sum_{x \in B(z,R)} u_\Omega^2(k,x) \pi(x) \exp\Big(-\frac{1}{\hat{c}}\Big)\exp\Big(\frac{1}{\hat{c}}\Big) \\
& \leq A \exp\Big(\frac{1}{\hat{c}}\Big) \sum_{k=0}^{2T} \, \sum_{x \in B(z,R)} u_\Omega^2(k,x) f(k,x) \pi(x) \\
&\leq A C(\hat{c}) \sum_{k=0}^{2T} \underbrace{\sum_{x \in \Omega} u_\Omega^2(k,x) f(k,x) \pi(x)}_{B_k}
\end{align*} 
We now want to apply the discrete integrated maximum principle. As mentioned above, $u_\Omega$ solves the heat equation in $\Omega$, and, by choice of $\hat{c},$ we also have that $f$ satisfies (\ref{disc_max_prin_cond}). Therefore, $B_k$ is decreasing in $k,$ which means it is largest when $k=0.$ Consequently
\begin{align*}
u_\Omega^2(T,z) &\leq 2A C(\hat{c}) T \sum_{x \in \Omega} u_\Omega^2(0,x) f(0,x) \pi(x)  \\
&= 2A C(\hat{c}) T \sum_{x \in \Omega} \bigg[ \sum_{y \in \Omega} p_\Omega(0,x,y) \varphi(y) \pi(y) \bigg]^2 \exp\Big(- \frac{\rho(x,B)^2}{\hat{c}(T+1)}\Big)\, \pi(x)  \\\
&= 2A C(\hat{c}) T \sum_{x \in \Omega} \varphi^2(x) \exp\Big(- \frac{\rho(x,B)^2}{\hat{c}(T+1)}\Big) \, \pi(x).
\end{align*}
All sums above were finite and well-defined since $\Omega$ is itself finite. As $\Omega \to \Gamma,$ then $u_\Omega \to u_\Gamma$ and
\begin{align*}
u_\Gamma^2(T,z) = \bigg[\sum_{y \in \Gamma} p(T,z,y)\varphi(y) \pi(y)\bigg]^2 \leq 2A C(\hat{c}) T \sum_{x \in \Gamma} \varphi(x)^2 \exp\Big(- \frac{\rho(x,B)^2}{\hat{c}(T+1)}\Big)\, \pi(x).
\end{align*} 

Now think of $T,z$ as fixed and consider a map $\Phi : L^2(\Gamma) \to \R$ given by 
\begin{equation*}
\Phi(\eta) := \sum_{ y \in \Gamma} p(T,y,z) \exp\Big(\frac{\rho(y, B)^2}{2\hat{c}(T+1)}\Big)\, \eta(y)\, \pi(y). 
\end{equation*}

If $\eta(y)$ has the form $\exp\Big(-\frac{\rho(y, B)^2}{2\hat{c}(T+1)}\Big) \varphi(y),$ then $\Phi(\eta) = u(T,z).$ In this case,
\[ \Phi(\eta)^2 \leq A C(\hat{c}) T \lVert \eta \rVert_{L^2(\Gamma)}^2. \]

Since functions $\eta$ of the form $\exp\Big(-\frac{\rho(y, B)^2}{2\hat{c}(T+1)}\Big) \varphi(y)$ (where $\varphi$ has compact support) are dense in $L^2(\Gamma),$ 
\begin{equation}
\lVert \Phi(\eta) \rVert_\R^2 = |\Phi(\eta)|^2 \leq A C(\hat{c}) T \lVert \eta \rVert_{L^2(\Gamma)}^2, 
\end{equation} 
which imples $\lVert \Phi \rVert^2 \leq A C(\hat{c})T$ and that $\Phi$ is well-defined.

On the other hand, $\lVert \Phi \rVert = \sup \{ \lVert \Phi(\eta) \rVert : \lVert \eta \rVert_{L^2} = 1\},$ so by Cauchy-Schwartz,
\begin{align*}
\lVert \Phi(\eta) \rVert &\leq \bigg[ \sum_{y \in \Gamma} p^2(T,y,z) \exp\Big(\frac{\rho(y,B)^2}{\hat{c}(T+1)}\Big) \pi(y) \bigg]^{1/2} \bigg[ \sum_{y \in \Gamma} \eta^2(y) \pi(y) \bigg]^{1/2} \\
&= \bigg[ \sum_{y \in \Gamma} p^2(T,y,z) \exp\Big(\frac{\rho(y,B)^2}{\hat{c}(T+1)}\Big) \pi(y) \bigg]^{1/2}.
\end{align*}

The supremum occurs when the above inequality is an equality, and hence
\begin{align*}
\lVert \Phi \rVert^2 = \sum_{y \in \Gamma} p^2(T,y,z) \exp\Big(\frac{\rho(y,B)^2}{\hat{c}(T+1)}\Big) \pi(y) \leq A C(\hat{c}) T .
\end{align*}
Recalling the definition of $A$ and simplifying, we get the desired inequality
\begin{align*}
\sum_{y \in \Gamma} p^2(T,y,z) \exp\Big(\frac{\rho(y,B)^2}{\hat{c}(T+1)}\Big) \, \pi(y) \leq \frac{Ce^{-1/\hat{c}}}{F(z,R) \min \{ (T/R^2)^{1/\alpha}, R^2/T\}}.
\end{align*}  
\end{proof}

We are at last ready to prove the theorem. 

\begin{proof}[Proof of Theorem \ref{FK_implies_HK}]

We begin with the result of Lemma \ref{E_D_lemma}: 
\[ p(2k, x,y) \leq \sqrt{E_D(k,x) E_D(k,y)} \exp\Big(-\frac{d^2(x,y)}{4Dk}\Big).\]

Recall
\[ E_D(k,x) = \sum_{w \in \Gamma} p^2(k,x,w) \exp\Big(\frac{\rho(x,w)^2}{Dk}\Big) \, \pi(w),\]
where $\rho(x,w) := \max\{ 1, d(x,w)\}.$ 
The only difference between $E_D$ and the left hand side of Lemma \ref{est_with_ball} is in the exponential term. In order to apply Lemma \ref{est_with_ball} to $E_D,$ we need to justify that we can replace 
$ \exp\Big(\frac{\rho(x,w)^2}{Dk}\Big)$ with $\exp\Big(\frac{\rho(w,B)^2}{\hat{c}(k+1)}\Big),$ where we take $B = B(x,R).$ 

This sort of estimate is only of interest to us if $k\geq 1,$ so $k+1 \approx k.$ As in \cite{ag_upperbds} (see the proof of Corollary 4.1), a quadratic inequality holds. In particular, provided that $D > 2 \hat{c},$
\begin{equation}
\frac{2\hat{c}}{D} \rho(x,y)^2 - \frac{2\hat{c}}{D-2\hat{c}}\, R^2 \leq \rho(w,B)^2.
\end{equation}

Therefore
\begin{align*}
\exp\bigg(\frac{\rho(x,w)^2}{Dk}\bigg) &\leq \exp\bigg(\frac{R^2}{(D-2\hat{c})k}\bigg)\exp\bigg(\frac{\rho(w,B)^2}{2\hat{c}k}\bigg) \\&\leq 
\exp\bigg(\frac{R^2}{(D-2\hat{c})k}\bigg)\exp\bigg(\frac{\rho(w,B)^2}{\hat{c}(k+1)}\bigg).
\end{align*}

Hence, applying Lemma \ref{est_with_ball},
\begin{align*}
E_D(k,x) &\leq \sum_{w \in \Gamma} p^2(k,x,w) \exp\bigg(\frac{R^2}{(D-2\hat{c})k}\bigg)\exp\bigg(\frac{\rho(w,B)^2}{\hat{c}(k+1)}\bigg)\, \pi(w) \\
&\leq \exp\bigg(\frac{R^2}{(D-2\hat{c})k}\bigg) \frac{C(\hat{c})}{F(x,R) \min\{(R^2/k), (k/R^2)^{1/\alpha}\}}.
\end{align*}

Now choose $R=\sqrt{2k};$ then $R^2/k$ is the constant $2$ and 
\[ E_D(k,x) \leq \frac{C(\hat{c}, D) }{F(x,\sqrt{2k})}.\] 
Chaining inequalities above together gives 
\begin{align}\label{final_est}
p(2k,x,y) \leq \frac{C}{\sqrt{F(x, \sqrt{2k}) F(y, \sqrt{2k})}} \exp\Big(-\frac{d^2(x,y)}{\tilde{c}(2k)}\Big),
\end{align}
where $C,\tilde{c}$ depend on $\hat{c}.$ Tracing back through the lemmas, $\hat{c}$ only depends upon the value $\alpha$ from the uniformly lazy condition (i.e. $\mathcal{K}(x,x) \geq \alpha \ \forall x \in \Gamma$), and not on $x,y,$ or $k.$ Inequality (\ref{final_est}) gives the desired result for even times; to get the result for odd times, the difference between one more step can be controlled by constants due to the hypotheses of controlled and uniformly lazy weights. It is also not necessary to take $k$ an integer in the proof above, provided some modifications are made with floor/ceiling functions where appropriate. 
\end{proof}

%



\begin{thebibliography}{10}

\bibitem{Barlow_graphs}
Martin~T. Barlow, \emph{Random walks and heat kernels on graphs}, London
  Mathematical Society Lecture Note Series, vol. 438, Cambridge University
  Press, Cambridge, 2017. \MR{3616731}

\bibitem{chen_lou_BMvary}
Zhen-Qing Chen and Shuwen Lou, \emph{Brownian motion on some spaces with
  varying dimension}, Ann. Probab. \textbf{47} (2019), no.~1, 213--269.
  \MR{3909969}

\bibitem{TC_AG_GraphVol}
Thierry Coulhon and Alexander Grigoryan, \emph{Random walks on graphs with
  regular volume growth}, Geom. Funct. Anal. \textbf{8} (1998), no.~4,
  656--701. \MR{1633979}

\bibitem{TC_AG_Zucca_MaxPrin}
Thierry Coulhon, Alexander Grigor'yan, and Fabio Zucca, \emph{The discrete
  integral maximum principle and its applications}, Tohoku Math. J. (2)
  \textbf{57} (2005), no.~4, 559--587. \MR{2203547}

\bibitem{TC_LSC_IsoInfini}
Thierry Coulhon and Laurent Saloff-Coste, \emph{Vari\'et\'es riemanniennes
  isom\'etriques \`a{} l'infini}, Rev. Mat. Iberoamericana \textbf{11} (1995),
  no.~3, 687--726. \MR{1363211}

\bibitem{Emily_thesis}
Emily Dautenhahn, \emph{Heat kernel estimates on glued spaces}, Ph.D. thesis,
  Cornell University, 2024.

\bibitem{ed_lsc_trans}
Emily Dautenhahn and Laurent Saloff-Coste, \emph{Hitting probabilities and
  uniformly {$S$}-transient subgraphs}, Electron. J. Probab. \textbf{29}
  (2024), Paper No. 80, 33. \MR{4757540}

\bibitem{Delmotte_PHI}
Thierry Delmotte, \emph{Parabolic {H}arnack inequality and estimates of
  {M}arkov chains on graphs}, Rev. Mat. Iberoamericana \textbf{15} (1999),
  no.~1, 181--232. \MR{1681641}

\bibitem{pd_khe_lsc_finiteMC}
Persi Diaconis, Kelsey Houston-Edwards, and Laurent Saloff-Coste,
  \emph{Analytic-geometric methods for finite markov chainswith applications to
  quasi-stationarity}, Latin American Journal of Probability and Mathematical
  Statistics \textbf{17} (2020), no.~2, 901--991.

\bibitem{pd_khe_lsc_gambler}
\bysame, \emph{Gambler?s ruin estimates on finite inner uniform domains}, The
  Annals of Applied Probability \textbf{31} (2021), no.~2, 865--895.

\bibitem{ag_localHarnack}
Alexander Grigor'yan, \emph{Heat kernel on a manifold with a local {H}arnack
  inequality}, Comm. Anal. Geom. \textbf{2} (1994), no.~1, 111--138.
  \MR{1312681}

\bibitem{ag_upperbds}
\bysame, \emph{Heat kernel upper bounds on a complete non-compact manifold},
  Rev. Mat. Iberoamericana \textbf{10} (1994), no.~2, 395--452. \MR{1286481}

\bibitem{ag_integralmax}
\bysame, \emph{Integral maximum principle and its applications}, Proc. Roy.
  Soc. Edinburgh Sect. A \textbf{124} (1994), no.~2, 353--362. \MR{1273753}

\bibitem{GSurv}
\bysame, \emph{Analytic and geometric background of recurrence and
  non-explosion of the {B}rownian motion on {R}iemannian manifolds}, Bull.
  Amer. Math. Soc. (N.S.) \textbf{36} (1999), no.~2, 135--249. \MR{1659871}

\bibitem{parabolic_ends}
Alexander Grigor'yan, Satoshi Ishiwata, and Laurent Saloff-Coste, \emph{Heat
  kernel estimates on connected sums of parabolic manifolds}, J. Math. Pures
  Appl. (9) \textbf{113} (2018), 155--194. \MR{3784808}

\bibitem{G_I_LSC_survey}
\bysame, \emph{Geometric analysis on manifolds with ends}, Analysis and partial
  differential equations on manifolds, fractals and graphs, Adv. Anal. Geom.,
  vol.~3, De Gruyter, Berlin, [2021] \copyright 2021, pp.~325--343.
  \MR{4320094}

\bibitem{ag_lsc_extcptset}
Alexander Grigor'yan and Laurent Saloff-Coste, \emph{Dirichlet heat kernel in
  the exterior of a compact set}, Comm. Pure Appl. Math. \textbf{55} (2002),
  no.~1, 93--133. \MR{1857881}

\bibitem{ag_lsc_hittingprob}
\bysame, \emph{Hitting probabilities for {B}rownian motion on {R}iemannian
  manifolds}, J. Math. Pures Appl. (9) \textbf{81} (2002), no.~2, 115--142.
  \MR{1994606}

\bibitem{ag_lsc_harnackstability}
\bysame, \emph{Stability results for {H}arnack inequalities}, Ann. Inst.
  Fourier (Grenoble) \textbf{55} (2005), no.~3, 825--890. \MR{2149405}

\bibitem{lsc_ag_ends}
\bysame, \emph{Heat kernel on manifolds with ends}, Ann. Inst. Fourier
  (Grenoble) \textbf{59} (2009), no.~5, 1917--1997. \MR{2573194}

\bibitem{ag_lsc_FKsurgery}
\bysame, \emph{Surgery of the {F}aber-{K}rahn inequality and applications to
  heat kernel bounds}, Nonlinear Anal. \textbf{131} (2016), 243--272.
  \MR{3427980}

\bibitem{ag_ishiwata}
Alexander Grigor?yan and Satoshi Ishiwata, \emph{Heat kernel estimates on a
  connected sum of two copies of $\mathbb{R}^n$ along a surface of revolution},
  Global and Stochastic Analysis \textbf{2} (2012), no.~1, 29--65.

\bibitem{bluebook}
Pavel Gyrya and Laurent Saloff-Coste, \emph{Neumann and {D}irichlet heat
  kernels in inner uniform domains}, no. 336, Soci{\'e}t{\'e} math{\'e}matique
  de France, 2011. \MR{2807275}

\bibitem{lawler_limic}
Gregory~F. Lawler and Vlada Limic, \emph{Random walk: a modern introduction},
  Cambridge Studies in Advanced Mathematics, vol. 123, Cambridge University
  Press, Cambridge, 2010. \MR{2677157}

\bibitem{lou_li}
Liping Li and Shuwen Lou, \emph{Distorted {B}rownian motions on space with
  varying dimension}, Electron. J. Probab. \textbf{27} (2022), Paper No. 72,
  32. \MR{4440065}

\bibitem{lou_explicithk}
Shuwen Lou, \emph{Explicit heat kernels of a model of distorted {B}rownian
  motion on spaces with varying dimension}, Illinois J. Math. \textbf{65}
  (2021), no.~2, 287--312. \MR{4278685}

\bibitem{martio_sarvas}
Olli Martio and Jukka Sarvas, \emph{Injectivity theorems in plane and space},
  Ann. Acad. Sci. Fenn. Ser. A I Math. \textbf{4} (1979), no.~2, 383--401.
  \MR{565886}

\bibitem{mathav_unif}
Mathav Murugan, \emph{Heat kernel for reflected diffusion and extension
  property on uniform domains}, Probab. Theory Related Fields \textbf{190}
  (2024), no.~1-2, 543--599. \MR{4797375}

\bibitem{ooi_bmvd}
Takumu Ooi, \emph{Heat kernel estimates on spaces with varying dimension},
  Tohoku Math. J. (2) \textbf{74} (2022), no.~2, 165--194. \MR{4455863}

\bibitem{rajala_unif}
Tapio Rajala, \emph{Approximation by uniform domains in doubling quasiconvex
  metric spaces}, Complex Anal. Synerg. \textbf{7} (2021), no.~1, Paper No. 4,
  5. \MR{4221625}

\bibitem{lsc_sobolevbook}
Laurent Saloff-Coste, \emph{Aspects of {S}obolev-type inequalities}, London
  Mathematical Society Lecture Note Series, vol. 289, Cambridge University
  Press, Cambridge, 2002. \MR{1872526}

\bibitem{Spitzer}
Frank Spitzer, \emph{Principles of random walk}, vol.~34, Springer Science \&
  Business Media, 2013.

\bibitem{vaisala_uniform}
Jussi V\"ais\"al\"a, \emph{Uniform domains}, Tohoku Math. J. (2) \textbf{40}
  (1988), no.~1, 101--118. \MR{927080}

\bibitem{woess_rwgraphsgroups}
Wolfgang Woess, \emph{Random walks on infinite graphs and groups}, Cambridge
  Tracts in Mathematics, vol. 138, Cambridge University Press, Cambridge, 2000.
  \MR{1743100}

\end{thebibliography}


 \providecommand{\bysame}{\leavevmode\hbox to3em{\hrulefill}\thinspace}
 \providecommand{\MR}{\relax\ifhmode\unskip\space\fi MR }
 \providecommand{\MRhref}[2]{%
   \href{http://www.ams.org/mathscinet-getitem?mr=#1}{#2}
 }

\end{document}